\newcommand{\Ord}{\mathrm{Ord}}
\newenvironment{enumerate-(a)}{\begin{enumerate}[label={\upshape (\alph*)}, leftmargin=2pc]}{\end{enumerate}}
\newenvironment{enumerate-(a)-r}{\begin{enumerate}[label={\upshape (\alph*)}, leftmargin=2pc,resume]}{\end{enumerate}}
\newenvironment{enumerate-(A)}{\begin{enumerate}[label={\upshape (\Alph*)}, leftmargin=2pc]}{\end{enumerate}}
\newenvironment{enumerate-(A)-r}{\begin{enumerate}[label={\upshape (\Alph*)}, leftmargin=2pc,resume]}{\end{enumerate}}
\newenvironment{enumerate-(i)}{\begin{enumerate}[label={\upshape (\roman*)}, leftmargin=2pc]}{\end{enumerate}}
\newenvironment{enumerate-(i)-r}{\begin{enumerate}[label={\upshape (\roman*)}, leftmargin=2pc,resume]}{\end{enumerate}}
\newenvironment{enumerate-(I)}{\begin{enumerate}[label={\upshape (\Roman*)}, leftmargin=2pc]}{\end{enumerate}}
\newenvironment{enumerate-(I)-r}{\begin{enumerate}[label={\upshape (\Roman*)}, leftmargin=2pc,resume]}{\end{enumerate}}
\newenvironment{enumerate-(1)}{\begin{enumerate}[label={\upshape (\arabic*)}, leftmargin=2pc]}{\end{enumerate}}
\newenvironment{enumerate-(1)-r}{\begin{enumerate}[label={\upshape (\arabic*)}, leftmargin=2pc,resume]}{\end{enumerate}}
\theoremstyle{plain}
\newtheorem{theorem}{Theorem}[section]
\newtheorem{lemma}[theorem]{Lemma}
\newtheorem{question}[theorem]{Question}
\newtheorem{corollary}[theorem]{Corollary}
\newtheorem{proposition}[theorem]{Proposition}
\newtheorem*{claim*}{Claim}
\theoremstyle{definition}
\newtheorem{definition}[theorem]{Definition}
\newcommand{\betrag}[1]{\vert{#1}\vert}
\newcommand{\height}[1]{{\rm{ht}}(#1)}
\newcommand{\dom}[1]{{{\rm{dom}}(#1)}}
\newcommand{\cof}[1]{{{\rm{cof}}(#1)}}
\newcommand{\otp}[1]{{{\rm{otp}}\left(#1\right)}}
\newcommand{\length}[2]{{\rm{lh}}_{{#2}}({#1})}
\newcommand{\map}[3]{{#1}:{#2}\longrightarrow{#3}}
\newcommand{\Set}[2]{\{{#1}~\vert~{#2}\}}
\newcommand{\seq}[2]{\langle{#1}~\vert~{#2}\rangle}
\newcommand{\goedel}[2]{{\prec}{#1},{#2}{\succ}}
\newcommand{\Add}[2]{{\rm{Add}}({#1},{#2})}
\newcommand{\Coll}[2]{{\rm{Col}}({#1},{#2})}
\newcommand{\id}{{\rm{id}}}
\newcommand{\Lim}{{\rm{Lim}}}
\newcommand{\LL}{{\rm{L}}}
\newcommand{\ZFC}{{\rm{ZFC}}}
\newcommand{\GCH}{{\rm{GCH}}}
\newcommand{\PPP}{{\mathbb{P}}}
\newcommand{\RRR}{{\mathbb{R}}}
\newcommand{\TTT}{{\mathbb{T}}}
\newcommand{\VV}{{\rm{V}}}
\newcommand{\CH}{{\rm{CH}}}
\title{Descriptive properties of higher Kurepa trees}
\author{Philipp L\"ucke}
\address{Institut de Matem\`{a}tica, Universitat de Barcelona. 
Gran via de les Corts Catalanes 585,
08007 Barcelona, Spain.}
\email{philipp.luecke@ub.edu}
\author{Philipp Schlicht}
\address{Institute for Mathematics, University of Vienna, Kolingasse 14-16, 1090 Vienna, Austria 
and 
School of Mathematics, University of Bristol, Fry Building, Woodland Road, Bristol, BS8 1UG, UK} 
\email{philipp.schlicht@univie.ac.at}
\thanks{This project has received funding from the European Union’s Horizon 2020 research and innovation programme under the Marie Sk{\l}odowska-Curie grant agreements No 842082 of the first author (Project \emph{SAIFIA: Strong Axioms of Infinity -- Frameworks, Interactions and Applications}) and No 794020 of the second author (Project \emph{IMIC: Inner models and infinite computations}). The second author was partially supported by FWF grant number I4039. } 
\subjclass[2020]{03E05, 03E35, 03E47.}
\keywords{Generalized Baire spaces, Kurepa trees, Continuous images}
\begin{document}

\begin{abstract} 
 We use generalizations of concepts from descriptive set theory 
  to study combinatorial objects of uncountable regular cardinality, focussing on higher Kurepa trees and the representation of the sets of cofinal branches through such trees as continuous images of function spaces. 
For different types of uncountable regular cardinals $\kappa$, our results provide a complete picture of all consistent scenarios  for  the representation of sets of cofinal branches through $\kappa$-Kurepa trees as retracts of the generalized Baire space ${}^\kappa\kappa$ of $\kappa$. 
 In addition, these results can be used to determine the consistency of most of the corresponding statements for continuous images of ${}^\kappa\kappa$.

 %
%
\end{abstract}

\maketitle



\section{Introduction}

 Given an infinite regular cardinal $\kappa$, the \emph{generalized Baire space of $\kappa$} consists of the set ${}^\kappa\kappa$ of all functions from $\kappa$ to $\kappa$ equipped with the topology whose basic open sets are of the form $N_s=\Set{x\in{}^\kappa\kappa}{s\subseteq x}$, where $s$ is an element of the set ${}^{{<}\kappa}\kappa$ of all functions $\map{t}{\alpha}{\kappa}$ with $\alpha<\kappa$. 
 One of the most basic structural features of the classical Baire space ${}^\omega\omega$ is the fact that non-empty closed subsets of ${}^\omega\omega$ are retracts of ${}^\omega\omega$ (see {\cite[Proposition 2.8]{MR1321597}}), i.e. for every such that $C$ there is a continuous surjection $\map{r}{{}^\omega\omega}{A}$ with $r\restriction A=\id_A$. 
 In contrast, the results of \cite{luecke-schlicht-continuous-images} show that such results cannot be generalized to higher cardinalities and this failure highlights fundamental differences between $\omega$ and higher regular cardinals, e.g. the existence of limit ordinals below the given cardinal and possible existence of Aronszajn and Kurepa trees at these cardinals. 
 First, {\cite[Proposition 1.4]{luecke-schlicht-continuous-images}} shows that for every uncountable regular cardinal $\kappa$, there is a non-empty closed subset of ${}^\kappa\kappa$ that is not a retract of ${}^\kappa\kappa$. 
 Moreover, {\cite[Theorem 1.5]{luecke-schlicht-continuous-images}} shows that for every uncountable cardinal $\kappa$ satisfying $\kappa=\kappa^{{<}\kappa}$, there is a non-empty closed subset of ${}^\kappa\kappa$ that is not a continuous image of ${}^\kappa\kappa$.\footnote{It is easy to see that, if there exists a cardinal $\mu<\kappa$ with $2^\mu=2^\kappa$, then every non-empty subset of ${}^\kappa\kappa$ is a continuous image of ${}^\kappa\kappa$. In particular, some cardinal arithmetic assumption on $\kappa$ is necessary for the conclusion of {\cite[Theorem 1.5]{luecke-schlicht-continuous-images}}.} 
 In particular, the classes of continuous images of ${}^\kappa\kappa$ and continuous images of non-empty closed subsets of ${}^\kappa\kappa$ in ${}^\kappa\kappa$ do not coincide in this case. Finally, the various results of \cite{luecke-schlicht-continuous-images} strongly motivate an investigation of the class of all closed subsets of ${}^\kappa\kappa$ that are continuous images of ${}^\kappa\kappa$. 
 Since closed subsets of ${}^\kappa\kappa$ canonically  correspond to sets of cofinal branches through trees of height 
  $\kappa$, the study of this class of subsets turns out to be closely connected to the existence of certain combinatorial objects of the given  cardinality and the validity of combinatorial principles at the corresponding cardinals.

 In this paper, motivated by results from \cite{luecke-schlicht-continuous-images} (see Theorem \ref{theorem:SurjImageWideLevels} below),  we focus on the representation of the sets of cofinal branches through $\kappa$-Kurepa trees as continuous images of ${}^\kappa\kappa$. 
 Our results will show that if $\kappa$ is a successor of a cardinal of countable cofinality, then $\kappa$-Kurepa trees provide more examples of closed subsets of ${}^\kappa\kappa$ that are not continuous images of ${}^\kappa\kappa$.  
  In contrast, we will also show that for other types of uncountable regular cardinals, various natural questions about such continuous representations of sets of branches through Kurepa trees can have widely different answers that depend 
  heavily on the underlying model of set theory. 

 Remember that a partial order $\TTT$ is a \emph{tree} if it has a unique minimal element and for every element $t$ of $\TTT$, the set $\mathrm{pred}_\TTT(t)=\Set{s\in\TTT}{s<_\TTT t}$ is well-ordered by $<_\TTT$. Given a tree $\TTT$, $t\in\TTT$ and $\alpha\in\Ord$, we define $\length{t}{\TTT}=\otp{\mathrm{pred}_\TTT(t),<_\TTT}$, $\TTT_\alpha=\Set{s\in\TTT}{\length{s}{\TTT}=\alpha}$, $\TTT_{{<}\alpha}=\bigcup\Set{\TTT_{\bar{\alpha}}}{\bar{\alpha}<\alpha}$ and $\height{\TTT}=\min\Set{\beta\in\Ord}{\TTT_\beta=\emptyset}$. Next, we say that a subset $b$ of a tree $\TTT$ is a \emph{branch} if it $<_\TTT$-downwards-closed and linearly ordered by $<_\TTT$ and it is a \emph{cofinal branch} if in addition $\otp{b,<_\TTT}=\height{\TTT}$ holds. Finally, given an uncountable regular cardinal $\kappa$, a tree $\TTT$ with $\height{\TTT}=\kappa$ is a \emph{$\kappa$-Kurepa tree} if $\betrag{\TTT_\alpha}<\kappa$ holds for all $\alpha<\kappa$ and $\TTT$ has at least $\kappa^+$-many cofinal branches. We usually say \emph{Kurepa tree} instead of $\omega_1$-Kurepa tree. Seminal results of Jensen (see {\cite[Chapter III, Section 3]{MR750828}}) show that, in the constructible universe $\LL$, $\kappa$-Kurepa trees exist for every uncountable regular cardinal $\kappa$.\footnote{Note that for all inaccessible cardinals $\kappa$, the existence of $\kappa$-Kurepa trees is trivial. Therefore, one usually considers trees with stronger restrictions on the size of levels at inaccessible cardinals (see \cite{JensenKunen1969:Ineffable} and Section \ref{subsection:NoImage} below).}
  In contrast, a classical argument of Silver (see {\cite[Section 3]{MR0284331}}) shows that if $\kappa$ is an uncountable regular cardinal that is not inaccessible, $\theta>\kappa$ is inaccessible and $G$ is $\Coll{\kappa}{{<}\theta}$-generic over $\VV$, then there are no $\kappa$-Kurepa trees in $\VV[G]$.

 Fix an infinite regular cardinal $\kappa$ and $0<n<\omega$. A subset $T$ of $({}^{{<}\kappa}\kappa)^n$ is a \emph{subtree of $({}^{{<}\kappa}\kappa)^n$} if $\dom{t_0}=\ldots=\dom{t_{n-1}}$ and $\langle t_0\restriction\alpha,\ldots,t_{n-1}\restriction\alpha\rangle\in T$ for all $\langle t_0,\ldots,t_{n-1}\rangle\in T$ and $\alpha<\dom{t_0}$.  
Note that for every such subtree $T$, the partial order $\TTT_T=\langle T,\lhd\rangle$ with $$\langle s_0,\ldots,s_{n-1}\rangle\lhd\langle t_0,\ldots,t_{n-1}\rangle ~ \Longleftrightarrow ~ s_0\subsetneq t_0 \wedge \ldots \wedge s_{n-1}\subsetneq t_{n-1}$$ for all $\langle s_0,\ldots,s_{n-1}\rangle,\langle t_0,\ldots,t_{n-1}\rangle\in T$ is a tree with $(\TTT_T)_\alpha=T\cap({}^\alpha\kappa)^n$ for all $\alpha<\kappa$. 
 Set $\height{T}=\height{\TTT_T}\leq\kappa$. If $\height{T}=\kappa$, then there is a canonical bijection between the set of cofinal branches through $\TTT_T$ and the set $$[T] ~ = ~ \Set{\langle x_0,\ldots,x_{n-1}\rangle\in({}^\kappa\kappa)^n}{\forall\alpha<\kappa ~ \langle x_0\restriction\alpha,\ldots,x_{n-1}\restriction\alpha\rangle\in T}$$ given by restrictions. It is easy to see that for every subtree $T$ of $({}^{{<}\kappa}\kappa)^n$ of height $\kappa$, the set $[T]$ is closed in the product space $({}^\kappa\kappa)^n$. Moreover, for every non-empty closed subset $C$ of $({}^\kappa\kappa)^n$, the set $$T_C ~ = ~ \Set{\langle t_0,\ldots,t_{n-1}\rangle\in({}^{{<}\kappa}\kappa)^n}{C\cap(N_{t_0}\times\ldots\times N_{t_{n-1}})\neq\emptyset}$$ is a subtree of $({}^{{<}\kappa}\kappa)^n$ with $C=[T_C]$.

Now, let $\kappa$ be an uncountable regular cardinal and let $\TTT$ be a tree of height $\kappa$ with the property that every node in $\TTT$ has at most $\kappa$ many direct successors. If $\TTT$ is \emph{extensional at limit levels} (i.e. if $s=t$ holds for all $s,t\in\TTT$ with $\length{s}{\TTT}=\length{t}{\TTT}\in\Lim$ and $\mathrm{pred}_\TTT(s)=\mathrm{pred}_\TTT(t)$), then it is easy to construct a subtree $T$ of ${}^{{<}\kappa}\kappa$ with the property that the tree $\TTT$ and $\TTT_T$ are isomorphic. In general, we can consider the tree $\bar{\TTT}$ that consists of all non-empty branches $b$ through $\TTT$ with the property that there exists $t\in\TTT$ with $s\leq_\TTT t$ for all $s\in b$ and is ordered by inclusion. Then $\bar{\TTT}$ is extensional at limit levels, the map $[t\mapsto \mathrm{pred}_\TTT(t)\cup\{t\}]$ is an isomorphism between $\TTT$ and a cofinal subtree of $\bar\TTT$ and hence there is a canonical isomorphism between the sets of cofinal branches through $\TTT$ and $\bar{\TTT}$. Moreover, it is easy to see that the assumption that $\TTT$ is a $\kappa$-Kurepa tree implies that $\bar{\TTT}$ is a $\kappa$-Kurepa tree too. In combination, this shows that the existence of a $\kappa$-Kurepa tree is equivalent to the existence of a \emph{$\kappa$-Kurepa subtree of ${}^{{<}\kappa}\kappa$}, i.e. a subtree $T$ of ${}^{{<}\kappa}\kappa$ with the property that the tree $\TTT_T$ is a $\kappa$-Kurepa tree.


\subsection{Kurepa trees that are not continuous images}\label{subsection:NoImage}

The following result provides several scenarios in which the closed sets induced by Kurepa trees are not continuous images of the corresponding generalized Baire space. 
 The statement of Corollary \ref{corollary: omega1 Kurepa trees} below was our original motivation for the work presented in this paper.

\begin{theorem}\label{theorem:Main-Negative}
 If $\kappa$ is an uncountable regular cardinal with $\mu^\omega\geq\kappa$ for some $\mu<\kappa$ and $T$ is a $\kappa$-Kurepa subtree of ${}^{{<}\kappa}\kappa$ with $\betrag{[T]}>\kappa^{{<}\kappa}$, then the set $[T]$ is not a continuous image of ${}^\kappa\kappa$. 
\end{theorem}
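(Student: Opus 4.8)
The plan is to argue by contradiction: assume that there is a continuous surjection $\map{f}{{}^\kappa\kappa}{[T]}$ and derive the bound $\betrag{[T]}\leq\kappa^{{<}\kappa}$, contradicting the hypothesis $\betrag{[T]}>\kappa^{{<}\kappa}$. First I would use the arithmetic assumption to shrink the alphabet: fixing $\mu<\kappa$ with $\mu^\omega\geq\kappa$ and a surjection $\map{e}{{}^\omega\mu}{\kappa}$, splitting $\kappa$ into consecutive blocks of order type $\omega$ and decoding each block through $e$ yields a continuous surjection ${}^\kappa\mu\to{}^\kappa\kappa$. Composing with $f$, I may assume from the outset that $f$ is a continuous surjection $\map{f}{{}^\kappa\mu}{[T]}$ defined on the smaller space ${}^\kappa\mu$.

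The engine of the argument is the interplay between continuity and the narrowness of the levels of $T$. For $s\in{}^{{<}\kappa}\mu$ let $\varphi(s)$ be the longest common initial segment of $\Set{f(x)}{x\in N_s}$; then $\varphi$ is monotone, $f(x)=\bigcup_{\alpha<\kappa}\varphi(x\restriction\alpha)$ for every $x$, and crucially $\varphi(s)\in T$, so at each level $\gamma$ only $\betrag{T_\gamma}<\kappa$ many values $\varphi(s)\restriction\gamma$ occur. To turn this into a cardinality bound I would fix an elementary submodel $M\prec\HH{\theta}$ for a sufficiently large $\theta$ with $\{T,f,\mu\}\in M$, $\betrag{M}=\kappa^{{<}\kappa}$, $\kappa^{{<}\kappa}\subseteq M$ and ${}^\omega M\subseteq M$; the last two requirements are simultaneously achievable because $(\kappa^{{<}\kappa})^\omega=\kappa^{{<}\kappa}$. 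Then ${}^{{<}\kappa}\kappa$, ${}^{{<}\kappa}\mu$ and $T$ are subsets of $M$, so in particular every proper initial segment of every branch in $[T]$ lies in $M$. The goal becomes to promote this level-by-level membership to $[T]\subseteq M$, which immediately gives $\betrag{[T]}\leq\betrag{M}=\kappa^{{<}\kappa}$.

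To show $[T]\subseteq M$, I would fix $b\in[T]$ and construct a preimage $x^*\in{}^\kappa\mu\cap M$ with $f(x^*)=b$ by recursion on the length, maintaining that $\varphi(x^*\restriction\gamma)$ is an initial segment of $b$. At successor stages I would select, inside $M$, a value in $\mu$ that keeps the stem on $b$; such a value exists because $f$ is surjective and, by narrowness of the levels, only $<\kappa$ many branching possibilities have to be distinguished at the relevant level. The delicate stages are the limits of cofinality $\omega$: there one has to pass to the union of the partial run and guarantee both that the stem has genuinely advanced along $b$ and that the resulting object is still an element of $M$. This is precisely the point at which the hypothesis $\mu^\omega\geq\kappa$ enters, since it is exactly the failure of $\omega$-inaccessibility that forces the cofinality-$\omega$ behaviour of the surjection into the $\omega$-closed model $M$; I expect this coordination of continuity, the closure ${}^\omega M\subseteq M$ and the narrow levels at countable-cofinality limits to be the main obstacle. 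As a sanity check, the argument must break down for inaccessible $\kappa$, where no such $\mu$ exists and where ${}^\kappa 2$ is a continuous image of ${}^\kappa\kappa$ with narrow levels and $2^\kappa>\kappa^{{<}\kappa}$ many branches.
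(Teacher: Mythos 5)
Your overall strategy (reduce the alphabet to $\mu$, then bound $\betrag{[T]}$ by $\kappa^{{<}\kappa}$ via a suitably closed elementary submodel $M$) is not the paper's route, and as outlined it has a genuine gap at its core. The recursion you describe breaks at limit stages, exactly where you yourself locate ``the main obstacle'', and you offer no idea for overcoming it. Concretely: the invariant you need is not merely that $\varphi(x^*\restriction\gamma)$ is an initial segment of $b$, but that $b$ itself still lies in $f[N_{x^*\restriction\gamma}]$ (otherwise nothing guarantees a successor value keeping the stem on $b$, since the stems of all one-step extensions may veer off $b$). At successors this stronger invariant is easy to preserve, and it needs no narrowness of levels at all; but at a limit $\gamma$ one only has $f[N_{x^*\restriction\gamma}]=f\bigl[\bigcap_{\delta<\gamma}N_{x^*\restriction\delta}\bigr]\subseteq\bigcap_{\delta<\gamma}f[N_{x^*\restriction\delta}]$, and this inclusion can be strict: the preimages of $b$ witnessing the earlier stages may split off $x^*$ cofinally below $\gamma$, so $b$ can be lost at every limit. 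Overcoming this is not a technical refinement; it is the entire content of the theorem. Moreover, even if the recursion ran for $\kappa$ steps and produced $x^*$ with $f(x^*)=b$, nothing puts $x^*$ into $M$: it is a union of $\kappa$ many elements of $M$ chosen using the parameter $b\notin M$, and ${}^\omega M\subseteq M$ is irrelevant to this final union since $\cof{\kappa}=\kappa>\omega$. (Inside the reductio one even sees that $f[{}^\kappa\mu\cap M]$ has size at most $\betrag{M}=\kappa^{{<}\kappa}<\betrag{[T]}$, so for most $b$ no preimage in $M$ exists; any correct completion must therefore make the construction fail somewhere, and your outline does not identify what blocks it precisely for Kurepa trees.) A useful test: under $\CH$, with $\kappa=\omega_1$ and $\mu=\omega$, the tree $S={}^{{<}\omega_1}2$ has $2^{\omega_1}>\omega_1^{{<}\omega_1}$ branches and $[S]$ is a retract of ${}^{\omega_1}\omega_1$, yet $S$ fails to be Kurepa only because its levels have size exactly $\omega_1$; your recursion as described does not visibly break for this $S$, which shows that the missing limit-stage argument must use the narrowness of the levels in an essential way that your outline never makes explicit.

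The paper's proof is entirely different and much shorter: it applies the quoted wide-level theorem (Theorem \ref{theorem:SurjImageWideLevels}, i.e. {\cite[Theorem 7.1]{luecke-schlicht-continuous-images}}) with $A$ taken to be the interval $(\mu,\kappa)$. A continuous surjection from ${}^\kappa\kappa$ onto $[T]$ with $\kappa^{{<}\kappa}<\betrag{[T]}$ yields a strictly increasing sequence $\seq{\lambda_n}{n<\omega}$ in $(\mu,\kappa)$ with supremum $\lambda$ and an injection of $\prod_{n<\omega}\lambda_n$ into the single level $T(\lambda)$, whence $\betrag{T(\lambda)}\geq\mu^\omega\geq\kappa$, contradicting the Kurepa property. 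The Cantor-scheme construction inside that quoted theorem is precisely the machinery that resolves the cofinality-$\omega$ limit difficulty your proposal leaves open; to complete your argument you would essentially have to reprove it.
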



This theorem has the following two direct corollaries:

\begin{corollary}\label{corollary: omega1 Kurepa trees} 
 Assume that $\CH$ holds. If $T$  is a subtree of ${}^{{<}\omega_1}\omega_1$ that is a Kurepa tree, then $[T]$ is not a continuous image of ${}^{\omega_1}\omega_1$. \qed
\end{corollary}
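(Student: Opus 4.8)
The plan is to obtain this statement as a direct instance of Theorem \ref{theorem:Main-Negative} applied with $\kappa = \omega_1$, so the entire argument reduces to checking that the two hypotheses of that theorem are met under $\CH$.

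First I would verify the arithmetic assumption that $\mu^\omega \geq \kappa$ holds for some $\mu < \kappa$. Choosing $\mu = \aleph_0 < \omega_1$, we compute $\mu^\omega = \aleph_0^{\aleph_0} = 2^{\aleph_0} \geq \aleph_1 = \omega_1$, so this condition is satisfied outright, without any appeal to $\CH$.

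Next I would check the hypothesis $\betrag{[T]} > \kappa^{{<}\kappa}$. Since the only infinite cardinal below $\omega_1$ is $\aleph_0$, we have $\kappa^{{<}\kappa} = \omega_1^{\aleph_0}$, and under $\CH$ this equals $(2^{\aleph_0})^{\aleph_0} = 2^{\aleph_0} = \omega_1$. On the other hand, since $T$ is a Kurepa subtree of ${}^{{<}\omega_1}\omega_1$, the tree $\TTT_T$ has height $\omega_1$ and, by the definition of a Kurepa tree, at least $\omega_2$-many cofinal branches; the canonical bijection between these branches and the elements of $[T]$ recorded in the excerpt then yields $\betrag{[T]} \geq \omega_2 > \omega_1 = \kappa^{{<}\kappa}$.

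With both hypotheses confirmed, Theorem \ref{theorem:Main-Negative} applies directly and shows that $[T]$ is not a continuous image of ${}^{\omega_1}\omega_1$, as required. Since the corollary is purely a matter of verifying hypotheses, there is no genuine obstacle here; the only point demanding a little care is the cardinal-arithmetic evaluation of $\omega_1^{{<}\omega_1}$ under $\CH$, together with the observation that the Kurepa property already guarantees strictly more than $\kappa^{{<}\kappa}$-many cofinal branches.
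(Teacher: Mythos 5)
Your proposal is correct and is exactly the argument the paper intends: the corollary is stated with an immediate \qed precisely because it follows from Theorem \ref{theorem:Main-Negative} with $\kappa=\omega_1$, $\mu=\aleph_0$ (so $\mu^\omega=2^{\aleph_0}\geq\omega_1$ unconditionally), and the $\CH$ computation $\omega_1^{{<}\omega_1}=\omega_1^{\aleph_0}=2^{\aleph_0}=\omega_1<\omega_2\leq\betrag{[T]}$. Both hypothesis checks are carried out correctly, so there is nothing to add.
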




\begin{corollary}
\label{corollary: mu+-Kurepa trees for singular mu} 
 Assume that $\kappa=\mu^+=2^\mu$ for some singular cardinal $\mu$ of countable cofinality. If $T$ is a subtree of ${}^{{<}\kappa}\kappa$ that is a $\kappa$-Kurepa tree, then $[T]$ is not a continuous image of ${}^{\kappa}\kappa$. \qed 
\end{corollary}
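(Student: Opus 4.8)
Since $\kappa=\mu^+=2^\mu$ with $\mu$ singular of countable cofinality forces $\mu^\omega>\mu$ (hence $\mu^\omega\geq\mu^+=\kappa$) and $\kappa^{{<}\kappa}=\kappa^\mu=(2^\mu)^\mu=\kappa<\kappa^+\leq\betrag{[T]}$, the corollary is an instance of Theorem \ref{theorem:Main-Negative}, so the plan is to prove that theorem. Suppose towards a contradiction that there is a continuous surjection $\map{f}{{}^\kappa\kappa}{[T]}$; I will show $\betrag{[T]}\leq\kappa^{{<}\kappa}$. After replacing $T$ by its pruning I may assume $T$ is pruned, so $\Set{x\restriction\alpha}{x\in[T]}=T_\alpha$ has size ${<}\kappa$ for all $\alpha<\kappa$. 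Note that the hypothesis gives $\kappa\leq\mu^\omega\leq\kappa^\omega\leq\kappa^{{<}\kappa}$; the aim is to encode each branch in the image by an element of ${}^{{<}\kappa}\kappa\times{}^\omega\mu$, a set of size $\kappa^{{<}\kappa}\cdot\mu^\omega=\kappa^{{<}\kappa}$.

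The first tool is continuity. Given $b\in[T]=\ran{f}$, fix $x_b$ with $f(x_b)=b$ and let $\map{m}{\kappa}{\kappa}$ send $\alpha$ to the least $\beta$ with $f(y)\restriction\alpha=b\restriction\alpha$ for all $y\in N_{x_b\restriction\beta}$; this is well-defined and nondecreasing by continuity. The club $C_b$ of closure points of $m$ then has the Lipschitz property that $f(y)\restriction\gamma=b\restriction\gamma$ for every $\gamma\in C_b$ and every $y\in N_{x_b\restriction\gamma}$; that is, along $C_b$ the initial segment $b\restriction\gamma$ is already decided by the bounded input $x_b\restriction\gamma$.

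The second tool is slimness. For each $\delta<\kappa$ the fibre $\Set{x}{f(x)\restriction\delta=t}$ equals $f^{-1}(N_t)$ and is clopen, so level $\delta$ induces a partition $\mathcal P_\delta$ of ${}^\kappa\kappa$ into ${<}\kappa$ clopen pieces, and these partitions refine as $\delta$ grows. The decisive feature appears at limits of countable cofinality: if $\delta=\sup_{n<\omega}\delta_n$, then the piece of $\mathcal P_\delta$ through a point $x$ is the countable intersection of the pieces of the $\mathcal P_{\delta_n}$ through $x$. Hence, for the stationarily many $\gamma\in C_b$ with $\cof{\gamma}=\omega$, choosing $\langle\gamma_n\rangle_{n<\omega}$ cofinal in $\gamma$ inside $C_b$, the segment $b\restriction\gamma=\bigcup_{n}b\restriction\gamma_n$ is obtained by a single $\omega$-sequence of choices, where the $n$-th choice selects one of the ${<}\kappa$ pieces of $\mathcal P_{\gamma_{n+1}}$ refining the current one.

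The final and hardest step is to assemble these into the bound $\betrag{[T]}\leq\kappa^{{<}\kappa}$. The plan is to run a pressing-down and fusion argument on the stationary set $\Set{\gamma<\kappa}{\cof{\gamma}=\omega}$ to show that each $b\in\ran{f}$ is determined by a bounded stem $x_b\restriction\gamma_0\in{}^{{<}\kappa}\kappa$ together with a single $z_b\in{}^\omega\mu$ coding the $\omega$-indexed block choices above. Here the hypothesis $\mu^\omega\geq\kappa$ is precisely what is needed to name, using $\mu$-many options at each of the $\omega$ stages, all of the ${<}\kappa$-many continuations that can occur cofinally, while $\mu^\omega\leq\kappa^{{<}\kappa}$ keeps the bookkeeping within $\kappa^{{<}\kappa}$. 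This yields an injection of $[T]$ into ${}^{{<}\kappa}\kappa\times{}^\omega\mu$ and hence the desired contradiction with $\betrag{[T]}>\kappa^{{<}\kappa}$. I expect this compression to be the main obstacle, since it is where slimness, continuity and the countable-cofinality arithmetic genuinely interact; consistently with the introduction, it must break down for strong-limit $\kappa$, where Kurepa branches can be continuous images.
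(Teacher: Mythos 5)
Your opening reduction is precisely the paper's proof of this corollary, which is why the statement carries a \qed{} and no further argument: since $\cof{\mu}=\omega$, K\"onig's theorem gives $\mu^\omega>\mu$, hence $\mu^\omega\geq\mu^+=\kappa$; moreover $\kappa^{{<}\kappa}\leq\kappa^\mu=(2^\mu)^\mu=2^\mu=\kappa$, while $\betrag{[T]}\geq\kappa^+$ because $T$ is a $\kappa$-Kurepa tree, so $\betrag{[T]}>\kappa^{{<}\kappa}$ and Theorem \ref{theorem:Main-Negative} applies verbatim. Had you stopped there and invoked that theorem (which the paper in turn derives from the black box Theorem \ref{theorem:SurjImageWideLevels}), your proof would be correct and identical in substance to the paper's.

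The gap lies in your attempted proof of Theorem \ref{theorem:Main-Negative} itself. Your preparatory steps are fine (pruning, the clubs $C_b$, the refining clopen partitions $\mathcal{P}_\delta$ with fewer than $\kappa$ pieces, countable intersections at points of countable cofinality), but your ``final and hardest step'' is an announcement rather than an argument, and the mechanism it announces breaks down. Since $\kappa$ is regular and uncountable, a code consisting of a bounded stem $x_b\restriction\gamma_0$ together with countably many data (the ordinals $\gamma_n$ and the piece choices $z_b$) can only pin down the bounded initial segment $b\restriction\gamma$, where $\gamma=\sup_{n<\omega}\gamma_n<\kappa$: the decoded set $D=N_{x_b\restriction\gamma_0}\cap\bigcap_{n<\omega}f^{-1}(N_{b\restriction\gamma_n})$ is only known to satisfy $f[D]\subseteq N_{b\restriction\gamma}\cap[T]$, not $f[D]=\{b\}$. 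To recover $b$ from its code you would either need $f$ to be constant on $D$ --- a rigidity property of the arbitrary surjection $f$ that continuity plus small levels does not supply, and for which you give no argument --- or you would need $b$ to be the unique element of $[T]$ extending $b\restriction\gamma$, i.e.\ isolated; but a tree all of whose levels have size less than $\kappa$ has at most $\betrag{T}\leq\kappa$ isolated cofinal branches, whereas $\betrag{[T]}\geq\kappa^+$. So the injectivity of your coding map, which is the entire content of the proof, is exactly what is missing; supplying it amounts to the fusion construction behind Theorem \ref{theorem:SurjImageWideLevels}, which the paper uses at precisely this point and whose conclusion is of a different nature: not that $[T]$ is small, but that some level $T(\lambda)$ absorbs an injective copy of $\prod_{n<\omega}\lambda_n$ with $\lambda_n>\mu$, so $\betrag{T(\lambda)}\geq\mu^\omega\geq\kappa$, contradicting the smallness of the levels. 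A further sign that the sketch is not worked out is the role you assign to $\mu^\omega\geq\kappa$ (``naming continuations''): for your counting you only need the number of codes to be at most $\kappa^{{<}\kappa}$, which holds regardless, whereas in the actual proof this hypothesis is what forces a level to be too wide.
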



Note that the above notion of Kurepa trees trivializes at inaccessible cardinals $\kappa$, because the complete binary tree ${}^{{<}\kappa}2$ obviously satisfies the listed properties.  
  Moreover, it is easy to see that the unique  function $\map{r}{{}^\kappa\kappa}{{}^\kappa 2}$ with $r(x)(\alpha)=\min\{1,x(\alpha)\}$ for all $x\in{}^\kappa\kappa$ and $\alpha<\kappa$ is a retraction from ${}^\kappa\kappa$ to ${}^\kappa 2$ in this case. 
 In the other direction, it is also possible to use a result from \cite{luecke-schlicht-continuous-images} to show that not every Kurepa tree at an inaccessible cardinal is a continuous image. 
 Moreover, this statement can be extended to certain regular cardinals that are only inaccessible in some inner model computing the successor of the given cardinal correctly.

 \begin{theorem}\label{theorem:KurepaInaccessibleWO}
 Let $M$ be an inner model and let $\kappa$ be a cardinal with $\kappa=\kappa^{{<}\kappa}$. 
  If $\kappa$ is inaccessible in $M$ and $(\kappa^+)^M=\kappa^+$ holds, then there is a $\kappa$-Kurepa subtree $T$ of ${}^{{<}\kappa}\kappa$ with $T\subseteq({}^{{<}\kappa}2)^M$ and the property that the set $[T]$ is not a continuous image of ${}^\kappa\kappa$. 
 \end{theorem}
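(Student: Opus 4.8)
The plan is to build the tree inside $M$, where $\kappa$ is well behaved, and then to read off its properties in $V$ from the two hypotheses. Since $\kappa$ is inaccessible in $M$, we have $M\models\kappa=\kappa^{{<}\kappa}$ and $\betrag{({}^\alpha 2)^M}<\kappa$ for every $\alpha<\kappa$. First I would work inside $M$ and apply the construction of \cite{luecke-schlicht-continuous-images} behind {\cite[Theorem 1.5]{luecke-schlicht-continuous-images}}, which under the assumption $\kappa=\kappa^{{<}\kappa}$ produces a subtree of ${}^{{<}\kappa}2$ whose body is not a continuous image of ${}^\kappa\kappa$. This yields a subtree $T\in M$ of $({}^{{<}\kappa}2)^M$ such that, in $M$, the set $[T]$ is not a continuous image of ${}^\kappa\kappa$, and I would keep track of the combinatorial feature of $T$ that the construction uses to exclude such surjections. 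Because $T\subseteq({}^{{<}\kappa}2)^M\subseteq{}^{{<}\kappa}\kappa$ and $T_\alpha\subseteq({}^\alpha 2)^M$, the size of each level satisfies $\betrag{T_\alpha}<\kappa$ in $M$, hence also in $V$; and $T$ has height $\kappa$.

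Next I would check that $T$ is a $\kappa$-Kurepa subtree of ${}^{{<}\kappa}\kappa$ in $V$. The levels are already small, so only the number of cofinal branches is at issue. In $M$ the body $[T]$ has size $(2^\kappa)^M\geq(\kappa^+)^M$, so $T$ carries at least $(\kappa^+)^M$ many cofinal branches there; each such branch is an $x\in{}^\kappa 2$ with $x\restriction\alpha\in T$ for all $\alpha<\kappa$, and distinct branches remain distinct cofinal branches in $V$. The hypothesis $(\kappa^+)^M=\kappa^+$ now turns these into $\kappa^+$ many cofinal branches through $T$ in $V$, so that $\betrag{[T]}\geq\kappa^+$. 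Together with the bound on the level sizes, this shows that $T$ is a $\kappa$-Kurepa subtree of ${}^{{<}\kappa}\kappa$ with $T\subseteq({}^{{<}\kappa}2)^M$, as required.

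It remains to show that $[T]$ is not a continuous image of ${}^\kappa\kappa$ in $V$, which is the heart of the matter and the main obstacle. Via the result of \cite{luecke-schlicht-continuous-images}, I would reduce the failure of representability to a combinatorial property of $T$ --- the absence of the substructure, such as a $\kappa$-perfect subtree, that a nonempty continuous image of size larger than $\kappa$ is forced to contain --- and then establish this property for $T$ directly in $V$. The danger is that the passage from $M$ to the larger model $V$ might introduce such a substructure, and hence a continuous surjection, that was absent in $M$. Here the hypotheses become indispensable: a $\kappa$-perfect subtree of $T$ constructed in $V$ would have limit nodes that are $V$-unions of branches through $({}^{{<}\kappa}2)^M$ and so need not lie in $M$, so one must arrange the construction of $T$, using the inaccessibility of $\kappa$ in $M$, in such a way that these limit nodes are blocked, while $(\kappa^+)^M=\kappa^+$ holds the branch count exactly at the Kurepa threshold. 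Showing that this blocking persists in $V$ --- equivalently, that the non-representability witnessed inside $M$ is upward absolute --- is the step I expect to be hardest and the place where both hypotheses are genuinely used.
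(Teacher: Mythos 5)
Your first two paragraphs are correct and match the paper's proof: the paper takes $T$ to be the tree of initial segments of the set $W\in M$ of all $x\in({}^\kappa 2)^M$ that code well-orders of $\kappa$ via G\"odel pairing, uses inaccessibility of $\kappa$ in $M$ to keep the levels of size ${<}\kappa$, and uses $(\kappa^+)^M=\kappa^+$ together with $\betrag{W}=(2^\kappa)^M$ to get $\kappa^+$ many cofinal branches in $\VV$. The problem is your last paragraph, which is the entire content of the theorem and which you explicitly leave open. Moreover, the route you sketch there cannot be made to work. You propose to reduce non-representability to the absence of a combinatorial substructure (a Cantor subtree or $\kappa$-perfect subtree) and then to show this absence is upward absolute from $M$ to $\VV$. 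But the implication from \cite{luecke-schlicht-continuous-images} only goes the other way: if $[T]$ is a continuous image (and $\betrag{[T]}>\kappa^{{<}\kappa}$), then $T$ \emph{has} a Cantor subtree (Corollary \ref{corollary:ContImageCantosSubtree}); the converse fails, and in fact the corollary at the end of Section 3 of this paper exhibits a model in which the tree produced by this very theorem is not a continuous image while \emph{every} $\kappa$-Kurepa tree there contains a Cantor subtree. So no substructure-avoidance property of $T$ can be the mechanism. The counting arguments that make the Cantor-subtree criterion bite elsewhere in the paper (Theorems \ref{theorem:Main-Negative} and \ref{theorem:Kurepa tree at inaccessibles}) are also unavailable here: the levels of $T$ may be as large as $(2^\lambda)^M$, and $2^{\aleph_0}$ may be smaller than $\kappa$ in $\VV$. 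Finally, non-representability witnessed in $M$ is in no sense upward absolute: $\VV$ has new continuous functions and possibly new branches through $T$, so nothing proved inside $M$ transfers.

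What the paper actually does at this point is a well-foundedness/boundedness argument carried out in $\VV$, exploiting what the branches of this particular tree \emph{mean}. Every $x\in[T]^\VV$ still codes a well-order $(\kappa,<_x)$, by regularity of $\kappa$ in $\VV$. Assuming $[T]$ is a continuous image of ${}^\kappa\kappa$, Lemmas 2.2 and 2.3 of \cite{luecke-schlicht-continuous-images} yield a ${<}\kappa$-closed subtree $U$ of ${}^{{<}\kappa}\kappa\times{}^{{<}\kappa}\kappa$ without end nodes with $[T]=p[U]$. One defines $r(t,u,\alpha)$ as the supremum of the ranks $\mathrm{rnk}_x(\alpha)$ over all $\langle x,y\rangle\in[U]$ extending $\langle t,u\rangle$. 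Here the hypothesis $(\kappa^+)^M=\kappa^+$ does a second, crucial job beyond the branch count: it makes the order types coded by branches in $W$ cofinal in $\kappa^+$, so that $r(\emptyset,\emptyset,\alpha)=\kappa^+$. A pigeonhole argument using $\kappa=\kappa^{{<}\kappa}$ (there are only $\kappa$ many candidates $(v,w,\beta)$, but $\kappa^+$ many targets $\gamma$) then shows that from any $(t,u,\alpha)$ with $r(t,u,\alpha)=\kappa^+$ one can pass to an extension $(v,w)\in U$ and some $\beta<_v\alpha$ with $r(v,w,\beta)=\kappa^+$. Iterating this $\omega$ times and using the ${<}\kappa$-closure of $U$ and the absence of end nodes produces an $x\in[T]$ with an infinite $<_x$-descending sequence, contradicting well-foundedness. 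Without this (or some comparable) argument, your proposal establishes only that $T$ is a $\kappa$-Kurepa tree, not that $[T]$ fails to be a continuous image.
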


 Motivated by these results about inaccessible cardinals, we also consider the following strengthening of the definition of Kurepa trees: given an uncountable regular cardinal $\kappa$, a tree $T$ of height $\kappa$ is called \emph{slim} if $\betrag{T(\alpha)}\leq\betrag{\alpha}$ holds for co-boundedly many $\alpha<\kappa$. 
 Classical results of Jensen and Kunen in \cite{JensenKunen1969:Ineffable} show that there are no slim $\kappa$-Kurepa trees at ineffable cardinals $\kappa$ and, in the constructible universe $\LL$, slim $\kappa$-Kurepa trees exist at every uncountable regular cardinal $\kappa$ that is not ineffable. 

The proof of Theorem \ref{theorem:Main-Negative} also allows us to derive the following statement.

\begin{theorem}\label{theorem:Kurepa tree at inaccessibles} 
 If $\kappa$ is an inaccessible cardinal and $T$ is a slim Kurepa subtree of ${}^{{<}\kappa}\kappa$, then $[T]$ is not a continuous image of ${}^{\kappa}\kappa$. 
\end{theorem}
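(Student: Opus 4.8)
The plan is to re-run the argument behind Theorem \ref{theorem:Main-Negative}, replacing the cardinal-arithmetic input $\mu^\omega\geq\kappa$ by slimness. So I assume towards a contradiction that there is a continuous surjection $\map{f}{{}^\kappa\kappa}{[T]}$. Since $\kappa$ is inaccessible we have $\kappa^{{<}\kappa}=\kappa$, and since $T$ is a Kurepa tree $\betrag{[T]}\geq\kappa^+>\kappa=\kappa^{{<}\kappa}$; hence it suffices to derive the bound $\betrag{[T]}\leq\kappa^{{<}\kappa}$. First I would record the standard reduction. For each $t\in T$ the set $A_t=f^{-1}([T]\cap N_t)$ is clopen, the family $\seq{A_t}{t\in T\cap{}^\alpha\kappa}$ partitions ${}^\kappa\kappa$ into $\betrag{T\cap{}^\alpha\kappa}$ pieces for every $\alpha<\kappa$, and these partitions refine as $\alpha$ grows. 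Continuity yields a monotone map $\map{F}{{}^{{<}\kappa}\kappa}{T}$ (sending $s$ to the stem of $f[N_s]$) with $f(x)=\bigcup_{\alpha<\kappa}F(x\restriction\alpha)$, and, writing $\delta(x,\gamma)$ for the least ordinal with $N_{x\restriction\delta(x,\gamma)}\subseteq A_{f(x)\restriction\gamma}$, there is for every $x$ a club $C_x\subseteq\kappa$ such that $f(x)\restriction\gamma\subseteq F(x\restriction\gamma)$ for all $\gamma\in C_x$; that is, $x\restriction\gamma$ already decides $f(x)\restriction\gamma$. By surjectivity, I fix for each $b\in[T]$ a point $x_b$ with $f(x_b)=b$.

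The heart of the matter is to code the branches $b$ injectively by objects drawn from a set of size $\kappa$. Branches whose fibre $f^{-1}(b)$ has nonempty interior can be coded by a basic open set contained in the fibre, and distinctness of fibres makes this coding injective; as there are only $\kappa^{{<}\kappa}=\kappa$ basic open sets, there are at most $\kappa$ such branches. The entire difficulty is therefore concentrated in the branches with nowhere dense fibre, and for these I would follow the Main-Negative argument, attaching to $b$ a critical level together with approximating data living below that level. This is where the two proofs diverge: in the non-inaccessible case one fixes $\mu<\kappa$ with $\mu^\omega\geq\kappa$ and separates the (more than $\kappa^{{<}\kappa}$) branches using $\omega$-sequences drawn from the narrow levels, the point being that for such $\kappa$ the limit levels of cofinality $\omega$ are generic enough to pin the relevant data down while the count of these $\omega$-sequences remains $\kappa^{{<}\kappa}$. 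At an inaccessible $\kappa$ this route is closed, since $\mu^\omega<\kappa$ for every $\mu<\kappa$. Instead, slimness supplies the sharper levelwise bound $\betrag{T\cap{}^\alpha\kappa}\leq\betrag{\alpha}$ on a club of levels, and I would reorganise the count to run through all levels $\alpha<\kappa$ simultaneously rather than through a single small cofinality, assigning to each nowhere-dense-fibre branch $b$ the $\subseteq$-increasing sequence of decision nodes $\seq{x_b\restriction\delta(x_b,\gamma)}{\gamma\in C_{x_b}}$ and compressing it, by means of the slim bound, to an element of a set of size $\kappa$.

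The main obstacle will be precisely this compression across limit levels of \emph{uncountable} cofinality, which do not occur when $\kappa=\omega_1$ or $\kappa=\mu^+$ with $\cof{\mu}=\omega$, and against which the $\omega$-based separation of Theorem \ref{theorem:Main-Negative} gives no leverage. The task is to show that the accumulated coding data, summed over all $\kappa$ levels and over the branchings recorded at limit stages of every cofinality below $\kappa$, still lands in a set of size $\kappa^{{<}\kappa}=\kappa$. Here inaccessibility is used twice: through $\kappa=\kappa^{{<}\kappa}$, and through the strong-limit property, which absorbs the partial products $\prod_{\alpha<\gamma}\betrag{T\cap{}^\alpha\kappa}\leq\betrag{\gamma}^{\betrag{\gamma}}<\kappa$ arising below each level $\gamma<\kappa$. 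Verifying that the slim bound $\betrag{T\cap{}^\alpha\kappa}\leq\betrag{\alpha}$, rather than the weaker Kurepa bound $\betrag{T\cap{}^\alpha\kappa}<\kappa$, is exactly what keeps these accumulated counts below $\kappa$ is the crux of the adaptation. It is also what explains why the conclusion genuinely requires slimness and must fail for arbitrary Kurepa trees at inaccessible cardinals, where the complete binary tree induces a retract and hence a continuous image.
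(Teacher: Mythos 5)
There is a genuine gap, and it sits exactly where you located it yourself: the ``compression'' of the nowhere-dense-fibre branches is never carried out, and the mechanism you propose for it cannot work. You claim the crux is that the slim bound $\betrag{T\cap{}^\alpha\kappa}\leq\betrag{\alpha}$, rather than the Kurepa bound $\betrag{T\cap{}^\alpha\kappa}<\kappa$, is what keeps the accumulated counts $\prod_{\alpha<\gamma}\betrag{T\cap{}^\alpha\kappa}$ below $\kappa$. But at an inaccessible cardinal this distinction is vacuous at that point: for \emph{any} $\kappa$-Kurepa tree and any $\gamma<\kappa$, regularity gives $\nu=\sup_{\alpha<\gamma}\betrag{T\cap{}^\alpha\kappa}<\kappa$, and the strong limit property then gives $\prod_{\alpha<\gamma}\betrag{T\cap{}^\alpha\kappa}\leq\nu^{\betrag{\gamma}}<\kappa$. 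Hence a coding scheme that invokes slimness only through such partial products would apply verbatim to the complete binary tree ${}^{{<}\kappa}2$, whose branch set ${}^\kappa 2$ \emph{is} a retract, and in particular a continuous image, of ${}^\kappa\kappa$. Any correct proof must therefore use slimness in a way your outline does not; and since the outline never actually produces the injective coding (you flag the limit levels of uncountable cofinality as ``the main obstacle'' without resolving it), the proposal does not constitute a proof.

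The paper's actual argument shows where slimness has to bite: not in a global accumulation over all levels, but at a single level of countable cofinality, played off against K\"onig's theorem. One simply applies Theorem \ref{theorem:SurjImageWideLevels} (the black box you set out to re-derive rather than use), taking $A$ to be the set of cardinals in the interval $(\alpha,\kappa)$, where $\alpha$ is past the point where slimness takes effect; this is legitimate because $\betrag{[T]}\geq\kappa^+>\kappa=\kappa^{{<}\kappa}$ and because the inaccessibility of $\kappa$ makes the cardinals unbounded in $\kappa$. The theorem yields a strictly increasing sequence $\seq{\lambda_n}{n<\omega}$ of \emph{cardinals} with supremum $\lambda$ and an injection $\map{i}{\prod_{n<\omega}\lambda_n}{T(\lambda)}$, and K\"onig's theorem then gives $\betrag{T(\lambda)}\geq\betrag{\prod_{n<\omega}\lambda_n}>\betrag{\sum_{n<\omega}\lambda_n}=\lambda$, contradicting $\betrag{T(\lambda)}\leq\lambda$. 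So slimness is used only at the one singular level $\lambda$, and the freedom to choose the $\lambda_n$ to be cardinals is precisely what replaces the hypothesis $\mu^\omega\geq\kappa$ of Theorem \ref{theorem:Main-Negative}. Your instinct that the two proofs should run in parallel was correct; the missing insight is that the parallelism passes through the common application of Theorem \ref{theorem:SurjImageWideLevels} with different choices of $A$ and a different final cardinality estimate, not through a new branch-counting argument.
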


Finally, the results of this paper allow us to use results of Donder from \cite{MR791060} and Velleman from \cite{MR771773} to show that the absence of large cardinals in the constructible universe implies the existence of Kurepa trees whose induced closed subsets are not continuous images. 

\begin{theorem}\label{theorem:ConsStrengthContImage}
 Let $\kappa$ be an uncountable regular cardinal such that  $\kappa=\kappa^{{<}\kappa}$ holds and neither $\kappa$ nor $\kappa^+$ are inaccessible in $\LL$.   
  Then there is a $\kappa$-Kurepa subtree of ${}^\kappa\kappa$ with the property that $[T]$ is not a continuous image of ${}^\kappa\kappa$. 
\end{theorem}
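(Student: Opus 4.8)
The plan is to read off the cardinal arithmetic forced by the hypotheses, build a Kurepa tree inside $\LL$ from the cited work of Donder and Velleman, and then apply the appropriate negative theorem of the paper. Concretely: since inaccessibility is downwards absolute to $\LL$, the cardinal $\kappa$ is not inaccessible in $V$; together with the regularity of $\kappa$ and the fact that $\kappa=\kappa^{{<}\kappa}$ forces $2^{{<}\kappa}=\kappa$, this makes $\kappa$ a successor cardinal $\nu^+$ with $2^\nu=\kappa$. In $\LL$, where $\GCH$ holds, $(2^\eta)^\LL=(\eta^+)^\LL\leq\kappa$ for every $\eta<\kappa$, so $(2^{{<}\kappa})^\LL=\kappa$ and hence $\betrag{({}^{{<}\kappa}2)^\LL}=\kappa$; this equality is the feature of $\LL$ that will later replace the inaccessibility assumption of Theorem \ref{theorem:KurepaInaccessibleWO}.

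Next I would invoke the anti-large-cardinal assumption on $\kappa^+$. By the results of Donder in \cite{MR791060}, the failure of $\kappa^+$ to be inaccessible in $\LL$ yields a gap-$1$ morass at $\kappa$ inside $\LL$ together with the successor-correctness $(\kappa^+)^\LL=\kappa^+$, and Velleman's construction from \cite{MR771773} turns such a morass into a $\kappa$-Kurepa subtree $T\in\LL$ of ${}^{{<}\kappa}\kappa$ with $T\subseteq({}^{{<}\kappa}2)^\LL$. Because $(\kappa^+)^\LL=\kappa^+$, the cofinal branches of $T$ computed in $\LL$ remain $\kappa^+$-many in $V$, so $T$ is a genuine $\kappa$-Kurepa subtree of ${}^{{<}\kappa}\kappa$ and in particular $\betrag{[T]}\geq\kappa^+>\kappa=\kappa^{{<}\kappa}$.

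Finally I would distinguish two cases according to the value of $\mu^\omega$ below $\kappa$. If some $\mu<\kappa$ satisfies $\mu^\omega\geq\kappa$, then $T$ meets all hypotheses of Theorem \ref{theorem:Main-Negative}, which immediately gives that $[T]$ is not a continuous image of ${}^\kappa\kappa$. If instead $\mu^\omega<\kappa$ for every $\mu<\kappa$, Theorem \ref{theorem:Main-Negative} is unavailable, and I would re-run the argument establishing Theorem \ref{theorem:KurepaInaccessibleWO} with the inner model $M=\LL$: the only role of ``$\kappa$ inaccessible in $M$'' in that argument is to guarantee $\betrag{({}^{{<}\kappa}2)^M}=\kappa$, which we secured in the first paragraph, while its remaining inputs $T\subseteq({}^{{<}\kappa}2)^\LL$ and $(\kappa^+)^\LL=\kappa^+$ were produced in the second. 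Either way $[T]$ is not a continuous image of ${}^\kappa\kappa$.

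The main obstacle is the interface with $\LL$. I must ensure that the objects manufactured inside $\LL$ remain correct in $V$: the successor-correctness $(\kappa^+)^\LL=\kappa^+$ coming from Donder's theorem is indispensable, since without it the $\kappa^+$-many $\LL$-branches of $T$ could collapse to at most $\kappa$-many in $V$ and destroy the Kurepa property. The second delicate point, relevant only when $\mu^\omega<\kappa$ for all $\mu<\kappa$, is to confirm that the proof of Theorem \ref{theorem:KurepaInaccessibleWO} uses the inaccessibility of $\kappa$ in $M$ solely through the bound $\betrag{({}^{{<}\kappa}2)^M}=\kappa$, so that it transfers unchanged to $M=\LL$ despite $\kappa$ not being inaccessible there.
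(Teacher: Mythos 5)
Your strategy has a fatal gap at its very first load-bearing step: the inference ``$\kappa^+$ is not inaccessible in $\LL$, hence $(\kappa^+)^\LL=\kappa^+$'' is false. Successor-correctness of $\LL$ simply does not follow from the hypotheses. For example, let $G$ be $\Coll{\omega_1}{\omega_2}$-generic over $\LL$ and work in $\VV=\LL[G]$ with $\kappa=\omega_1$: then $\kappa=\kappa^{{<}\kappa}$, and $\kappa^+=(\omega_3)^\LL$ is a successor cardinal of $\LL$, so neither $\kappa$ nor $\kappa^+$ is inaccessible in $\LL$, yet $(\kappa^+)^\LL=(\omega_2)^\LL<\kappa^+$. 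What the hypotheses actually yield --- and what the paper uses, citing \cite{MR1049849} --- is a set $A\subseteq\kappa$ such that $(\kappa^+)^{\LL[A]}=\kappa^+$, Donder's construction of a simplified $(\kappa,1)$-morass with linear limits can be carried out in $\LL[A]$, and the resulting object is still such a morass in $\VV$. The relativized model $\LL[A]$ cannot be replaced by $\LL$. Since both of your cases transfer a tree built inside $\LL$ to $\VV$ and rest on $(\kappa^+)^\LL=\kappa^+$ for the Kurepa property to survive (as you yourself note in your last paragraph), both cases collapse. The paper avoids the transfer problem altogether: Velleman's {\cite[Theorem 4.3]{MR771773}} is applied \emph{in $\VV$} to the morass-in-$\VV$, producing a $\kappa$-Kurepa tree of $\VV$ with no Cantor subtree, and the conclusion follows from Corollary \ref{corollary:ContImageCantosSubtree}; the ``no Cantor subtree'' property, which your argument never invokes, is the actual mechanism ruling out continuous surjections, and it works uniformly with no case distinction on $\mu^\omega$.

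Your fallback case (when $\mu^\omega<\kappa$ for all $\mu<\kappa$) has a second, independent gap. In the proof of Theorem \ref{theorem:KurepaInaccessibleWO}, the inaccessibility of $\kappa$ in $M$ is not used merely to get $\betrag{({}^{{<}\kappa}2)^M}=\kappa$; it is used to ensure that each \emph{level} $({}^{\alpha}2)^M$ has size ${<}\kappa$, so that the tree of well-order codes is $\kappa$-Kurepa. If $\kappa$ is not inaccessible in $\LL$, then, being regular and non-inaccessible there, $\kappa=(\eta^+)^\LL$ for some $\LL$-cardinal $\eta$, and level $\eta$ of the well-order tree computed in $\LL$ has size exactly $(2^\eta)^\LL=\kappa$: every well-ordering of $\eta$ lying in $\LL$ extends in $\LL$ to a well-ordering of $\kappa$, and there are $(2^\eta)^\LL$ many of them. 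So that tree is not $\kappa$-Kurepa, and the ``only role of inaccessibility'' claim is wrong. Moreover, the contradiction argument in that proof (building an infinite $<_x$-descending sequence) is tied to the specific tree of well-order codes; it does not accept an arbitrary Kurepa tree $T\subseteq({}^{{<}\kappa}2)^\LL$, such as Velleman's, as an input, so the two ingredients of your second and third paragraphs cannot be combined in the way you describe.
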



\subsection{Kurepa trees that are continuous images}\label{subsection:PositiveResults}

Somewhat surprisingly, Corollaries \ref{corollary: omega1 Kurepa trees} and \ref{corollary: mu+-Kurepa trees for singular mu}
in previous section turn out to be the only provable restrictions on the existence of $\kappa$-Kurepa trees whose sets of cofinal branches are continuous images of ${}^\kappa\kappa$. 
The proof of the following theorem relies on a result of Donder on the structural properties of the Kurepa trees constructed from the canonical morasses at successor cardinals in the constructible universe. 
This result can be combined with Theorem \ref{theorem:Main-Negative} to show that the statement that there is an $\omega_2$-Kurepa subtree $T$ of ${}^{{<}\omega_2}\omega_2$ with the property that $[T]$ is a continuous image of ${}^{\omega_2}\omega_2$ is independent of the axioms of $\ZFC$ by considering the constructible universe $\LL$ and models of the negation of the Continuum Hypothesis.

\begin{theorem}\label{theorem:KurepaImageInL}
 Assume that $\VV=\LL$ holds. Then the following statements are equivalent for every uncountable regular cardinal $\kappa$: 
 \begin{enumerate}
  \item The cardinal $\kappa$ is not the successor of a cardinal of countable cofinality. 

  \item There is a $\kappa$-Kurepa subtree of ${}^{{<}\kappa}\kappa$ with the property that $[T]$ is a retract of ${}^\kappa\kappa$. 

  \item There is a $\kappa$-Kurepa subtree of ${}^{{<}\kappa}\kappa$ with the property that $[T]$ is a continuous image of ${}^\kappa\kappa$. 
 \end{enumerate}
\end{theorem}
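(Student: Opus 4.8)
The plan is to prove the cycle of implications $(2)\Rightarrow(3)\Rightarrow(1)\Rightarrow(2)$. The implication $(2)\Rightarrow(3)$ is immediate: a retraction $\map{r}{{}^\kappa\kappa}{[T]}$ with $r\restriction[T]=\id_{[T]}$ is in particular a continuous surjection onto $[T]$, since $r(b)=b$ for every $b\in[T]$. For $(3)\Rightarrow(1)$ I would argue contrapositively. If $(1)$ fails, then $\kappa=\lambda^+$ for some cardinal $\lambda$ with $\cof{\lambda}=\omega$, and I would distinguish the cases $\lambda=\omega$ and $\lambda$ singular. As $\VV=\LL$ implies $\GCH$, in the first case $\CH$ holds, so Corollary \ref{corollary: omega1 Kurepa trees} applies; in the second case $\kappa=\lambda^+=2^\lambda$, so Corollary \ref{corollary: mu+-Kurepa trees for singular mu} applies with $\mu=\lambda$. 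In either case no $\kappa$-Kurepa subtree $T$ of ${}^{{<}\kappa}\kappa$ has $[T]$ a continuous image of ${}^\kappa\kappa$, and hence $(3)$ fails.

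The substantive implication is $(1)\Rightarrow(2)$, which I would split according to the type of $\kappa$. If $\kappa$ is inaccessible, I would take $T={}^{{<}\kappa}2$: strong inaccessibility gives $\betrag{T_\alpha}=2^{\betrag{\alpha}}<\kappa$ for all $\alpha<\kappa$ and $\betrag{[T]}=2^\kappa\geq\kappa^+$, so $T$ is a $\kappa$-Kurepa subtree of ${}^{{<}\kappa}\kappa$, and the rounding map $r(x)(\alpha)=\min\{1,x(\alpha)\}$ noted in the text above is a level-preserving continuous retraction of ${}^\kappa\kappa$ onto $[T]={}^\kappa2$. The remaining, and principal, case is $\kappa=\lambda^+$ with $\cof{\lambda}>\omega$; here I would use a canonical $\kappa$-Kurepa subtree $T$ of ${}^{{<}\kappa}\kappa$ obtained from a gap-$1$ morass in $\LL$, together with Donder's structural analysis of such trees.

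For this case the decisive arithmetic fact is that $\cof{\lambda}>\omega$ together with $\GCH$ yields $\mu^\omega\leq\lambda^\omega=\lambda<\kappa$ for every $\mu<\kappa$. In particular the hypothesis of Theorem \ref{theorem:Main-Negative} fails, so no obstruction of that kind is present; moreover, at each limit level $\gamma<\kappa$ of cofinality $\omega$ the tree $T$ has fewer than $\kappa$ many cofinal branches below $\gamma$, since these can be enumerated along a cofinal $\omega$-sequence and thereby bounded by $\mu^\omega<\kappa$. The central difficulty is that such a thin tree will in general possess \emph{gaps}: limit levels $\gamma$ carrying a cofinal branch $c$ of $T\cap({}^{{<}\gamma}\kappa)$ that does not extend to a node of $T_\gamma$. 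In the presence of a gap no \emph{level-preserving} retraction can exist, since the proper initial segments of $c$ all lie in $T$ and would be fixed by the retraction, whence continuity would force its value on $c\in{}^\gamma\kappa$ to equal $\bigcup_{\alpha<\gamma}c\restriction\alpha=c\notin T$. The retraction must therefore \emph{look ahead}, reading a longer initial segment of the input than the output coordinate currently being decided, so as to steer approaches to a gap onto genuine cofinal branches of $T$.

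This is the point at which Donder's result enters: it furnishes a $\kappa$-Kurepa subtree $T$ of ${}^{{<}\kappa}\kappa$ whose gap structure is coherently governed by the underlying morass, so that, using the bound $\lambda^\omega=\lambda$ on the number of $\omega$-approaches at every level, I would define a monotone tree map with bounded look-ahead that induces a continuous retraction $\map{r}{{}^\kappa\kappa}{[T]}$ satisfying $r\restriction[T]=\id_{[T]}$. I expect the main obstacle to be the construction and verification of this retraction — in particular, ensuring continuity at limit levels of all cofinalities and that the diagonal limits formed along the map remain inside $T$ — together with isolating the exact coherence needed from Donder's analysis and adapting it to successors of singular cardinals of uncountable cofinality.
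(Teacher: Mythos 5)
Your cycle $(2)\Rightarrow(3)\Rightarrow(1)\Rightarrow(2)$ agrees with the paper in the routine parts: $(2)\Rightarrow(3)$ is immediate, your contrapositive for $(3)\Rightarrow(1)$ via Corollaries \ref{corollary: omega1 Kurepa trees} and \ref{corollary: mu+-Kurepa trees for singular mu} is equivalent to the paper's direct appeal to Theorem \ref{theorem:Main-Negative} (under $\GCH$, $\cof{\mu}=\omega$ gives $\mu^\omega=\mu^+=\kappa$ and $\betrag{[T]}\geq\kappa^+>\kappa^{{<}\kappa}$), and the inaccessible case of $(1)\Rightarrow(2)$ is identical. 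The genuine gap is in the remaining case $\kappa=\lambda^+$ with $\cof{\lambda}>\omega$, which is the entire substance of the theorem: you do not construct the retraction. Your text ends with a declaration of intent (``I would define a monotone tree map with bounded look-ahead\dots'') and an explicit admission that the construction, its continuity at limit levels, and the exact property needed from Donder's analysis are all still missing. As it stands, this implication is unproved.

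Beyond being incomplete, your plan aims at a harder target than the theorem requires, and one the paper itself leaves open. You want a retraction of ${}^\kappa\kappa$ onto $[T]$ where $T$ is the canonical morass tree itself; but statement (2) only asks for \emph{some} Kurepa subtree whose branch set is a retract, and the paper exploits exactly this freedom. Its route: Donder's Theorem \ref{theorem:DonderKurepa} produces a \emph{locally coherent} Kurepa tree $S$; Proposition \ref{proposition:LocallyCoherentSuperthin}, using precisely your arithmetic observation $\lambda^\omega=\lambda<\kappa$, shows $S$ is \emph{superthin}, i.e.\ $\betrag{(S\cup\partial S)\cap{}^\alpha\kappa}<\kappa$ at every limit level $\alpha$, so $S$ has fewer than $\kappa$ many ``gaps'' (boundary nodes of limit domain) per level; Lemma \ref{lemma:SuperthinContImage} then enlarges $S$ by attaching to every such gap the branch continuing it with zeros. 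Superthinness guarantees that the enlarged tree $T$ is still Kurepa, and by construction $T$ is pruned and ${<}\kappa$-closed, i.e.\ it has no gaps at all, so Proposition \ref{proposition:PrunedClosedImage} yields a retraction onto $[T]$ by a straightforward argument requiring no look-ahead. Your gap obstruction is thereby dissolved rather than overcome, and the isolated branches that Theorem \ref{theorem:ResultsKurepaNoIsolated} forces any such retract to have (since $\kappa$ is not inaccessible here) are exactly the attached zero-tails. By contrast, whether the branch set of the canonical morass tree is even a \emph{continuous image} of ${}^\kappa\kappa$ is posed as an open question at the end of the paper, so your plan, if it could be carried out, would resolve an open problem --- a strong sign that it is not the right route for this theorem.
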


In combination with Theorem \ref{theorem:Kurepa tree at inaccessibles}, the previous result directly implies an analogous characterization for slim Kurepa trees.

\begin{corollary}
 Assume that $\VV=\LL$ holds. Then the following statements are equivalent for every uncountable regular cardinal $\kappa$: 
 \begin{enumerate}
  \item The cardinal $\kappa$ is neither inaccessible nor  the successor of a cardinal of countable cofinality. 

  \item There is a slim $\kappa$-Kurepa subtree of ${}^{{<}\kappa}\kappa$ with the property that $[T]$ is a retract of ${}^\kappa\kappa$. 

  \item There is a slim $\kappa$-Kurepa subtree of ${}^{{<}\kappa}\kappa$ with the property that $[T]$ is a continuous image of ${}^\kappa\kappa$. \qed
 \end{enumerate}
\end{corollary}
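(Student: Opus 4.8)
The plan is to obtain the three-way equivalence by combining Theorem~\ref{theorem:KurepaImageInL} with Theorem~\ref{theorem:Kurepa tree at inaccessibles}, using the non-slim characterization essentially as a black box and inserting the slimness hypothesis at exactly two points: to rule out inaccessibility on the way to~(1), and to upgrade the witnessing tree to a slim one on the way to~(2). The implication from~(2) to~(3) needs no set theory at all, since a retraction $\map{r}{{}^\kappa\kappa}{[T]}$ is in particular a continuous surjection and hence already witnesses that $[T]$ is a continuous image of ${}^\kappa\kappa$.

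For the implication from~(3) to~(1), I would argue by exclusion. Suppose $T$ is a slim $\kappa$-Kurepa subtree of ${}^{{<}\kappa}\kappa$ with $[T]$ a continuous image of ${}^\kappa\kappa$. Since $T$ is then in particular a $\kappa$-Kurepa subtree of ${}^{{<}\kappa}\kappa$ with this property, statement~(3) of Theorem~\ref{theorem:KurepaImageInL} holds, so the equivalence there yields that $\kappa$ is not the successor of a cardinal of countable cofinality. It remains to exclude that $\kappa$ is inaccessible; but if it were, then Theorem~\ref{theorem:Kurepa tree at inaccessibles}, applied to the slim Kurepa subtree $T$, would give that $[T]$ is \emph{not} a continuous image of ${}^\kappa\kappa$, contradicting our assumption. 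Hence $\kappa$ is neither inaccessible nor the successor of a cardinal of countable cofinality, which is~(1). This step is where the slimness hypothesis genuinely earns its keep, as it is precisely what activates Theorem~\ref{theorem:Kurepa tree at inaccessibles}.

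The implication from~(1) to~(2) is where the remaining work lies. First I would use that $\VV=\LL$ forces $\GCH$, so that every uncountable regular cardinal that is not inaccessible is a successor cardinal (a regular limit cardinal is weakly inaccessible, hence inaccessible under $\GCH$); writing $\kappa=\mu^+$, the hypothesis that $\kappa$ is not the successor of a cardinal of countable cofinality gives $\cof{\mu}>\omega$. At such a successor cardinal the proof of the implication from~(1) to~(2) in Theorem~\ref{theorem:KurepaImageInL} produces a concrete $\kappa$-Kurepa subtree $T$ of ${}^{{<}\kappa}\kappa$ built from the canonical morass, for which Donder's structural analysis makes $[T]$ a retract of ${}^\kappa\kappa$. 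The point is then that this morass-generated tree is slim: since $\kappa$ is not inaccessible it is in particular not ineffable, so the classical results of Jensen and Kunen in~\cite{JensenKunen1969:Ineffable} show that the canonical morass at $\kappa$ yields a slim $\kappa$-Kurepa tree. Thus the witness $T$ can be taken slim, establishing~(2).

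The main obstacle is precisely this last identification: one must check that the specific tree constructed in the proof of Theorem~\ref{theorem:KurepaImageInL} is (or may be replaced by) the slim morass-Kurepa tree of Jensen and Kunen, rather than merely knowing abstractly that slim Kurepa trees and retract-inducing Kurepa trees both happen to exist at $\kappa$. Once the two constructions are recognized as the same morass-based object, slimness and the retract property hold simultaneously for a single $T$, and the corollary follows; every other step reduces to the two quoted theorems together with the observation that in $\LL$ regular limit cardinals are inaccessible.
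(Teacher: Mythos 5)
Your overall architecture is the paper's intended one: (2)$\Rightarrow$(3) is trivial, and (3)$\Rightarrow$(1) correctly combines the two quoted theorems, with slimness used exactly where it must be, namely to activate Theorem~\ref{theorem:Kurepa tree at inaccessibles} and exclude inaccessibility, while Theorem~\ref{theorem:KurepaImageInL} excludes successors of cardinals of countable cofinality. Those two implications are fine, as is your reduction of (1)$\Rightarrow$(2), via $\GCH$ in $\LL$, to the case $\kappa=\mu^+$ with $\cof{\mu}>\omega$.

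The gap is in the last step of (1)$\Rightarrow$(2): you never prove that the witnessing tree is slim. The Jensen--Kunen result you cite only asserts that \emph{some} slim $\kappa$-Kurepa tree exists at non-ineffable $\kappa$ in $\LL$; it says nothing about the tree arising in the proof of Theorem~\ref{theorem:KurepaImageInL}, and the ``identification'' of the two constructions is exactly the step you admit you cannot carry out. Moreover, even if you knew that Donder's locally coherent tree of Theorem~\ref{theorem:DonderKurepa} is slim (it is, since trivial coherence bounds each infinite level by the cardinality of its index), that would not settle the matter: the retract witness in the proof of Theorem~\ref{theorem:KurepaImageInL} is not that tree itself but its enlargement produced by Lemma~\ref{lemma:SuperthinContImage}, which adjoins the $0$-extensions of all limit-level boundary nodes, and slimness of this larger tree is what would need checking. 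Fortunately, no such analysis is necessary, because of an elementary observation your write-up misses: at a successor cardinal $\kappa=\mu^+$, \emph{every} $\kappa$-Kurepa subtree $T$ of ${}^{{<}\kappa}\kappa$ is slim. Indeed, the Kurepa property gives $\betrag{T\cap{}^\alpha\kappa}<\kappa$, hence $\betrag{T\cap{}^\alpha\kappa}\leq\mu$, for all $\alpha<\kappa$, while $\betrag{\alpha}=\mu$ holds for every $\alpha$ in the co-bounded set $[\mu,\kappa)$. Since your case analysis already shows that (1) forces $\kappa$ to be a successor cardinal in $\LL$, any witness to statement (2) of Theorem~\ref{theorem:KurepaImageInL} is automatically slim, and (1)$\Rightarrow$(2) follows; the detour through ineffability and morass identification can simply be deleted.
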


The next result shows that the positive implications of Theorem \ref{theorem:KurepaImageInL} for successors of regular cardinals can also be obtained by collapsing an inaccessible cardinal to become the successor of an uncountable regular cardinal.

\begin{theorem}\label{theorem:CollapseImageNONImage}
 If $\kappa$ is an inaccessible cardinal and $\mu<\kappa$ is an uncountable regular cardinal, then 
there is a generic extension $\VV[G]$ of the ground model $\VV$ that preserves all cofinalities less than or equal to $\mu$ and greater than or equal to $\kappa$, such that the following statements hold in $\VV[G]$: 
 \begin{enumerate}
  \item $\kappa=\mu^+$. 
  
  \item There is a $\kappa$-Kurepa subtree $T_0$ of ${}^{{<}\kappa}\kappa$ with the property that the set $[T_0]$ is a retract of ${}^\kappa\kappa$. 
  
  \item There is a $\kappa$-Kurepa subtree $T_1$ of ${}^{{<}\kappa}\kappa$ with $T_1\subseteq T_0$ and the property that the set $[T_1]$ is not a continuous image of ${}^\kappa\kappa$. 
 \end{enumerate}
\end{theorem}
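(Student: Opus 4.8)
The plan is to realise $\VV[G]$ as a L\'evy collapse extension and then obtain the two trees from Theorem~\ref{theorem:KurepaInaccessibleWO} together with one bespoke construction. Concretely, I would let $\PPP=\Coll{\mu}{{<}\kappa}$ and fix a $\PPP$-generic filter $G$ over $\VV$. Since $\mu$ is regular, $\PPP$ is ${<}\mu$-closed, and since $\kappa$ is inaccessible in $\VV$ we have $\betrag{\PPP}=\kappa$ together with the $\kappa$-chain condition; hence $\PPP$ preserves all cofinalities ${\leq}\mu$ and ${\geq}\kappa$, collapses every cardinal in the open interval $(\mu,\kappa)$, and forces $\kappa=\mu^+$, which is item~(1). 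A routine nice-name count using the inaccessibility of $\kappa$ in $\VV$ then shows that in $\VV[G]$ we have $2^\mu=\kappa$ and $\kappa^{{<}\kappa}=\kappa^\mu=\kappa$. Finally, $\PPP$ preserves $\kappa^+$, so $(\kappa^+)^\VV=(\kappa^+)^{\VV[G]}$, and $\kappa$ clearly remains inaccessible in the inner model $\VV$.

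With these arithmetic facts in hand, item~(3) becomes essentially immediate from Theorem~\ref{theorem:KurepaInaccessibleWO}. I would apply that theorem in $\VV[G]$ to the inner model $M=\VV$: its three hypotheses $\kappa=\kappa^{{<}\kappa}$, ``$\kappa$ is inaccessible in $M$'', and $(\kappa^+)^M=\kappa^+$ have just been verified. This yields a $\kappa$-Kurepa subtree $T_1$ of ${}^{{<}\kappa}\kappa$ with $T_1\subseteq({}^{{<}\kappa}2)^\VV$ such that $[T_1]$ is not a continuous image of ${}^\kappa\kappa$. The only remaining point for~(3) is to secure $T_1\subseteq T_0$, which I would handle by building $T_0$ so that $({}^{{<}\kappa}2)^\VV\subseteq T_0$; then $T_1\subseteq({}^{{<}\kappa}2)^\VV\subseteq T_0$ holds automatically. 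Observe that $({}^{{<}\kappa}2)^\VV$ is a subtree of ${}^{{<}\kappa}\kappa$ whose level at $\alpha$ has size $(2^{\betrag{\alpha}})^\VV<\kappa$, so keeping it inside $T_0$ is compatible with all levels of $T_0$ remaining small.

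For item~(2) I would then construct a single $\kappa$-Kurepa tree $T_0\supseteq({}^{{<}\kappa}2)^\VV$ whose set of cofinal branches is a retract of ${}^\kappa\kappa$, mirroring the role played by Donder's morass-Kurepa trees in Theorem~\ref{theorem:KurepaImageInL}. The generic $G$ supplies, coherently in $\alpha<\kappa$, surjections of $\mu$ onto the relevant ground-model level sets; I would use this coherent system both to \emph{fatten} $({}^{{<}\kappa}2)^\VV$ to a subtree $T_0$ of ${}^{{<}\kappa}\kappa$ with all levels of size ${\leq}\mu<\kappa$ and, simultaneously, to define a monotone, length-preserving, limit-continuous map $\map{\pi}{{}^{{<}\kappa}\kappa}{T_0}$ that restricts to the identity on $T_0$. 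Such a $\pi$ induces the required continuous retraction $\map{r}{{}^\kappa\kappa}{[T_0]}$, witnessing that $[T_0]$ is a retract; and since $T_0$ contains the ${\geq}\kappa^+$ cofinal branches of $({}^{{<}\kappa}2)^\VV$, has height $\kappa$, and has levels of size $<\kappa$, it is a $\kappa$-Kurepa subtree of ${}^{{<}\kappa}\kappa$ with $T_1\subseteq T_0$. Where it applies, I would invoke Theorem~\ref{theorem:SurjImageWideLevels} to certify the retract property from the structural features of $T_0$ rather than exhibiting $r$ by hand.

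I expect the construction of $T_0$ to be the main obstacle. The difficulty is the genuine tension between the two demands on $T_0$: being a retract pushes toward rich, surjective branching, whereas the Kurepa condition forbids any level of size $\kappa$, so the retraction cannot be produced by naive ``full splitting''. The delicate point is to define $\pi$ coherently at limit levels so that it never runs into a ``gap'' of $T_0$, i.e. so that $\bigcup_{\beta<\lambda}\pi(x\restriction\beta)\in T_0$ for every limit $\lambda<\kappa$, while keeping every level of $T_0$ of size ${\leq}\mu$ and keeping $({}^{{<}\kappa}2)^\VV\subseteq T_0$ intact. This is exactly where the coherence of the collapse — the morass-like system extracted from $G$ — must be exploited, and it is the step that makes essential use of the hypothesis that $\kappa$ is inaccessible in $\VV$.
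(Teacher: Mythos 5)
Your treatment of item (3) is fine: applying Theorem \ref{theorem:KurepaInaccessibleWO} inside $\VV[G]$ with $M=\VV$ is a legitimate alternative to the paper's route (the paper instead takes $T_1=({}^{{<}\kappa}2)^\VV$ and quotes Theorem \ref{theorem:CoverPropNotImage}). The genuine gap is in item (2), and it is not merely an unfinished construction: with your choice of forcing, the tree $T_0$ you describe provably cannot exist. Since $\Coll{\mu}{{<}\kappa}$ is ${<}\mu$-closed, it is ${<}\mu$-distributive, so $({}^{\beta}2)^{\VV[G]}=({}^{\beta}2)^{\VV}$ for all $\beta<\mu$; hence every subset of $\mu$ in $\VV[G]\setminus\VV$ is \emph{fresh} over $\VV$, i.e.\ all of its proper initial segments lie in $({}^{{<}\mu}2)^\VV$. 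Moreover, in $\VV[G]$ we have $2^\mu\geq\mu^+=\kappa$ while $(2^\mu)^\VV<\kappa$, so there are at least $\kappa$-many such fresh elements of ${}^\mu 2$. Now suppose $T_0\supseteq({}^{{<}\kappa}2)^\VV$ admits your monotone, length-preserving, limit-continuous map $\pi$ with $\pi\restriction T_0=\id$. For each fresh $t\in{}^\mu 2$ and any $x\in{}^\kappa\kappa$ extending $t$, we get $\pi(x\restriction\beta)=t\restriction\beta$ for all $\beta<\mu$, so your own limit-continuity requirement forces $t=\bigcup_{\beta<\mu}\pi(x\restriction\beta)\in T_0$. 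Thus level $\mu$ of $T_0$ has cardinality at least $\kappa$, contradicting the requirement that $T_0$ be a $\kappa$-Kurepa tree. The same argument applies if you only demand $T_1\subseteq T_0$: coding arbitrary new subsets of $\mu$ into well-orderings of $\mu$ produces $\kappa$-many fresh branches through $T_1\cap{}^{{<}\mu}2$ at level $\mu$. So the tension you flag at the end is not a technical difficulty to be overcome by exploiting ``coherence of the collapse''; it is an outright obstruction in the model $\VV[G]$.

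This is exactly why the paper does not use the pure L\'evy collapse: it forces with $\Add{\omega}{1}\ast\Coll{\mu}{{<}\kappa}$, adding a Cohen real $x$ before collapsing. By Hamkins' closure-point lemma ({\cite[Lemma 13]{MR2063629}}), the pair $(\VV,\VV[x,G])$ has the $\aleph_1$-approximation property, and this is what eliminates the fresh branches above: any new branch through $({}^{{<}\alpha}2)^\VV$ at a level $\alpha$ of uncountable cofinality would have all of its countable approximations in $\VV$ and would therefore lie in $\VV$. Combined with $\lambda^\omega<\kappa$ at levels of countable cofinality, this makes $({}^{{<}\kappa}2)^\VV$ \emph{superthin} in $\VV[x,G]$ (Lemma \ref{lemma:SuperthinFromInnerModel}), and Lemma \ref{lemma:SuperthinContImage} then extends it to the desired Kurepa tree $T_0$ whose branch set is a retract; the Cohen real simultaneously supplies $\RRR\nsubseteq\VV$ for the paper's route to item (3) via Theorem \ref{theorem:CoverPropNotImage}. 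Your proposal is missing precisely this idea (a closure point at $\omega$, i.e.\ the approximation property), and no amount of care in defining $\pi$ can substitute for it. A separate minor error: Theorem \ref{theorem:SurjImageWideLevels} only yields \emph{negative} conclusions, that certain sets are not continuous images; it cannot be ``invoked to certify the retract property'' of $[T_0]$.
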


The notion of \emph{slimness} considered in Theorem \ref{theorem:Kurepa tree at inaccessibles} has a natural weakening that only requires $\betrag{T(\alpha)}\leq\betrag{\alpha}$ to hold in a stationary set of $\alpha<\kappa$. 
 We refer to this property as \emph{stationary slimness}. 
The following result shows that, in general, it is not possible to replace \emph{slimness} by \emph{stationary slimness} in the assumptions of Theorem \ref{theorem:Kurepa tree at inaccessibles}. 
 Remember that an inaccessible cardinal $\kappa$ is a \emph{$2$-Mahlo cardinal} if the set of Mahlo cardinals less than $\kappa$ is stationary in $\kappa$.

\begin{theorem}\label{theorem:StatSlimImage}
 If $\kappa$ is a $2$-Mahlo cardinal, then the following statements hold in a generic extension of the ground model $\VV$: 
 \begin{enumerate}
  \item $\kappa$ is inaccessible. 

  \item There is a stationary slim $\kappa$-Kurepa subtree $T$ of ${}^{{<}\kappa}\kappa$ with the property  that $[T]$ is a retract of ${}^\kappa\kappa$. 
 \end{enumerate}
\end{theorem}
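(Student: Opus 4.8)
The plan is to add $T$ by a forcing $\PP$ that builds a pruned subtree of ${}^{{<}\kappa}\kappa$ level by level, together with $\kappa^+$ many distinguished cofinal branches witnessing the Kurepa property, under two opposing width constraints. Using that $\kappa$ is $2$-Mahlo, I first fix a stationary set $S\subseteq\kappa$ consisting of Mahlo cardinals and declare $S$ to be the set of \emph{thin} levels. A condition is essentially a pair $p=(t_p,b_p)$, where $t_p$ is a pruned subtree of ${}^{{<}\eta_p}\kappa$ for some $\eta_p<\kappa$ and $b_p$ is a partial injection of size ${<}\kappa$ from $\kappa^+$ into the top level of $t_p$, ordered by end-extension of the tree together with extension of the injection. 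Every condition is required to satisfy $\betrag{t_p(\lambda)}\leq\lambda$ for all $\lambda\in S\cap\eta_p$ (thinness on $S$), to split widely off $S$ (each node at a level $\alpha\notin S$ has more than $\betrag{\alpha}$ many immediate successors), and to be limit-closed off $S$ (every cofinal branch of $(t_p)_{{<}\lambda}$ appears in $t_p(\lambda)$ whenever $\lambda\notin S$ is a limit). Standard density arguments then show that the generic tree $T$ has height $\kappa$, satisfies $\betrag{T(\alpha)}<\kappa$ for all $\alpha<\kappa$ (here the strong-limitness of $\kappa$ bounds the widely split levels below $\kappa$), and carries $\kappa^+$ many cofinal branches, so that $T$ is a $\kappa$-Kurepa subtree of ${}^{{<}\kappa}\kappa$.

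Next I would read off the slimness behaviour and verify clause~(1). The thinness constraint gives $\betrag{T(\lambda)}\leq\lambda=\betrag{\lambda}$ for every $\lambda\in S$, and since $S$ is stationary the tree $T$ is \emph{stationary slim}. At the same time the wide-splitting requirement forces $\betrag{T(\alpha)}>\betrag{\alpha}$ for unboundedly many $\alpha\in\kappa\setminus S$, so $T$ is \emph{not} slim; this is essential, since by Theorem~\ref{theorem:Kurepa tree at inaccessibles} a slim Kurepa tree at $\kappa$ can never be a continuous image of ${}^\kappa\kappa$, let alone a retract. For the preservation statement I would check that $\PP$ is ${<}\kappa$-closed and $\kappa^+$-c.c.; closure is ensured by the bookkeeping that lets an increasing ${<}\kappa$-sequence of conditions be amalgamated (adding the relevant limit level according to whether its height lies in $S$), while the chain condition follows from a $\Delta$-system argument using $\kappa^{{<}\kappa}=\kappa$. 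Consequently $\PP$ adds no new bounded subsets of $\kappa$ and preserves $\kappa^+$, so $\kappa$ remains a regular strong limit, i.e.\ inaccessible, in $\VV[G]$.

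The decisive point is to produce a continuous retraction $\map{r}{{}^\kappa\kappa}{[T]}$ with $r\restriction[T]=\id_{[T]}$. I would define $r(x)$ by recursion on levels, maintaining $r(x)\restriction\alpha\in T$ and copying $x$ (setting $r(x)(\alpha)=x(\alpha)$) whenever this keeps the path inside $T$. At successor levels the wide splitting off $S$ always supplies a canonical default successor, and at limit levels off $S$ the limit-closure guarantees that the path built so far already lies in $T$; hence the construction never breaks down outside $S$. All of the difficulty is concentrated at the thin levels $\lambda\in S$, where the generic thinning discards cofinal branches of $T_{{<}\lambda}$ and thereby creates \emph{holes}: nodes of length $\lambda$ all of whose proper initial segments belong to $T$ but which are themselves not in $T$. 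At a hole the naive recursion stalls, and, more seriously, a hole can be approached arbitrarily closely by genuine branches, which is precisely the configuration that endangers the continuity of $r$.

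Resolving this is, I expect, the main obstacle of the whole theorem, and it is here that $2$-Mahloness enters. The plan is to arrange the generic thinning recursively, so that below each $\lambda\in S$ the tree $T_{{<}\lambda}$ is itself of the same shape — stationary slim at $\lambda$, with its surviving length-$\lambda$ branches forming a retract of the set of all cofinal branches of $T_{{<}\lambda}$. Since each $\lambda\in S$ is Mahlo, the inaccessible cardinals below $\lambda$ are stationary in $\lambda$, which furnishes exactly the next tier of thinning levels needed to carry out this local construction; the recursion then bottoms out at inaccessible levels carrying no further thinning, where the ambient branch space is hole-free and admits the required retract onto a suitably chosen set of survivors. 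A continuous ``nearest surviving branch'' map below each $\lambda\in S$ is obtained from this local retract, and these maps, together with the copying rule off $S$, have to be shown to cohere into a single globally continuous $r$ — the verification of continuity at points of $[T]$ that run through the stationary set of thin levels being the technical heart of the argument. The requirement that the top tier $S$ of thinning levels be a \emph{stationary} set of Mahlo cardinals is precisely the hypothesis that $\kappa$ is $2$-Mahlo.
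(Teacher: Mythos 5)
Your construction cannot succeed as designed, because the tree it produces provably has \emph{no} isolated cofinal branches, and this runs headlong into the paper's Theorem \ref{theorem:ResultsKurepaNoIsolated}: if $\kappa$ is inaccessible and $T$ is a stationary slim $\kappa$-Kurepa subtree of ${}^{{<}\kappa}\kappa$ whose branch set $[T]$ is a retract of ${}^\kappa\kappa$, then $[T]$ \emph{must} contain isolated points. In your generic tree, every node at a level $\alpha\notin S$ has more than $\betrag{\alpha}$ immediate successors, and routine density arguments for your forcing show that every node of $T$ lies on a distinguished cofinal branch and that any node has extensions through which two distinct distinguished branches pass; hence every basic open set meeting $[T]$ contains at least two cofinal branches, i.e.\ $[T]$ has no isolated points. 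Consequently no continuous retraction $\map{r}{{}^\kappa\kappa}{[T]}$ can exist at all: given such an $r$, Proposition \ref{proposition:SplitPlusInclude} together with the absence of isolated points lets one build, exactly as in the proof of Theorem \ref{theorem:ResultsKurepaNoIsolated}, nodes $t_s\in T$ for $s\in{}^{{<}\kappa}2$ with $r[N_{t_s}]\subseteq[T]\cap N_{t_s}$, and this yields a club $C\subseteq\kappa$ with $\betrag{T\cap{}^{\xi}\kappa}\geq 2^{\xi}>\xi$ for every $\xi\in C$; any $\xi\in C\cap S$ then violates your thinness requirement $\betrag{T(\xi)}\leq\xi$. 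So the difficulty you correctly locate at the thin levels --- continuity of a ``nearest surviving branch'' map --- is not a technical obstacle to be overcome by recursively organizing the thinning below each $\lambda\in S$; it is an outright impossibility for any tree of the shape you build. (A secondary, fixable issue: as stated your forcing is not ${<}\kappa$-closed, since an increasing chain of conditions tracking more than $\lambda$ distinguished branches cannot be amalgamated at a top level $\lambda\in S$ of size $\leq\lambda$ while keeping $b_p$ injective.)

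The paper's proof goes a different way precisely in order to \emph{create} the isolated points that Theorem \ref{theorem:ResultsKurepaNoIsolated} makes mandatory. It forces no generic tree and no generic branches: it takes $T_0=({}^{{<}\kappa}2)^{\VV}$, forces with $\Add{\omega}{1}$ followed by an Easton-support iteration that collapses $2^\alpha$ to $\alpha$ at every Mahlo cardinal $\alpha<\kappa$ (so the levels of $T_0$ become small on the set of Mahlo cardinals, whose stationarity is preserved --- this is the only place $2$-Mahloness is used), invokes Hamkins' $\aleph_1$-approximation property for this forcing to conclude that $T_0$ acquires no new branches at limit levels of uncountable cofinality and is therefore superthin (Lemma \ref{lemma:SuperthinFromInnerModel}), and then completes $T_0$ as in Lemma \ref{lemma:SuperthinContImage}: above every boundary node at a limit level one attaches the unique extension taking value $0$ from that point on. These attached branches are exactly the required isolated points, they are few enough (by superthinness) to keep the completed tree stationary slim and $\kappa$-Kurepa, and the completed tree is ${<}\kappa$-closed and pruned, so Proposition \ref{proposition:PrunedClosedImage} gives the retraction outright. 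If you wish to salvage a forcing construction in your style, you must perform the analogous completion --- filling every hole at a thin level with a new, necessarily isolated, branch rather than retracting onto the surviving ones --- and once that is done, the recursion using Mahlo cardinals below each $\lambda\in S$ becomes unnecessary.
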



\subsection{Kurepa trees that are not retracts}\label{subsection:NoRetracts}

The trees constructed in the proofs of the results of the previous section have many isolated points. 
The next result shows that this is a necessary condition. 
Note that, throughout this paper, \emph{inaccessible} means \emph{strongly inaccessible} and, if $\kappa$ is an inaccessible cardinal, then ${}^{{<}\kappa}2$ is a $\kappa$-Kurepa subtree of ${}^{{<}\kappa}\kappa$ and the set $[{}^{{<}\kappa}2]={}^\kappa 2$ is a retract of ${}^\kappa\kappa$ without isolated points.

\begin{theorem}\label{theorem:ResultsKurepaNoIsolated}
 Let $\kappa$ be an uncountable regular cardinal and let $T$ be a $\kappa$-Kurepa subtree of ${}^\kappa\kappa$. Assume that either $\kappa$ is not inaccessible or $T$ is stationary slim.   If the set $[T]$ is a retract of ${}^\kappa\kappa$, then it contains isolated points.  
\end{theorem}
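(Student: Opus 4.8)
The plan is to prove the contrapositive: assuming that $[T]$ is a retract of ${}^\kappa\kappa$ with no isolated points, I will derive a contradiction. Fix a retraction $\map{r}{{}^\kappa\kappa}{[T]}$ and assume without loss of generality that $T$ is pruned, so every node of $T$ lies on a cofinal branch. The assumption that $[T]$ has no isolated points then says precisely that above every node of $T$ there is a splitting node, i.e. for every $t\in T$ there are incomparable $t_0,t_1\in T$ extending $t$. Call a node $s\in{}^{{<}\kappa}\kappa$ \emph{invariant} if $r[N_s]\subseteq N_s$, equivalently if $r(y)\restriction\dom{s}=s$ for all $y\in N_s$. Since $r$ is continuous and $r(b)=b$ for every $b\in[T]$, a standard closing-off argument shows that for each $b\in[T]$ the set $D_b=\Set{\alpha<\kappa}{b\restriction\alpha \text{ is invariant}}$ is a club in $\kappa$; in particular every $t\in T$ has arbitrarily long invariant initial segments of branches extending it.

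The heart of the argument is the following \emph{limit-capture} observation, which is where the assumption that $r$ is a retraction (rather than merely a continuous surjection) is used essentially. Suppose $\seq{s_i}{i<\lambda}$ is a $\subseteq$-increasing sequence of invariant nodes and let $u=\bigcup_{i<\lambda}s_i$. Then for any $y\in{}^\kappa\kappa$ with $u\subseteq y$ we have $r(y)\restriction\dom{s_i}=s_i$ for every $i<\lambda$, and taking the union over $i$ gives $r(y)\restriction\dom{u}=u$. As $r(y)\in[T]$, this shows that $u\in T$ and that $u$ is again invariant. Thus invariance is preserved under unions of increasing sequences, and limits of invariant branch-segments never leave the tree --- exactly the failure that non-retract closed sets can exhibit at limit levels.

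Using this I would build, by recursion on $\xi<\kappa$, a system $\seq{a_\tau}{\tau\in{}^\xi 2}$ of invariant nodes of $T$ together with ordinals $\ell_\xi<\kappa$ such that: (i) $\tau\mapsto a_\tau$ is $\subseteq$-monotone and sends incomparable sequences to incomparable nodes; (ii) every $a_\tau$ with $\dom{\tau}=\xi$ lies at level $\ell_\xi$; and (iii) $\ell_\xi$ is continuous and strictly increasing in $\xi$. At successor stages I split each $a_\tau$ using a splitting node provided by perfectness, then realign the $2^{\betrag{\xi}}<\kappa$ resulting nodes to a common level $\ell_{\xi+1}$ chosen in the intersection of the corresponding clubs $D_b$ (an intersection of fewer than $\kappa$ clubs, hence club, since $\kappa$ is regular), which keeps all of them invariant. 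At limit stages I invoke limit-capture: for each $\tau\in{}^\xi 2$ the node $a_\tau:=\bigcup_{i<\xi}a_{\tau\restriction i}$ lies in $T$, is invariant, and sits at the common level $\ell_\xi=\sup_{i<\xi}\ell_i$. Since $2^{\betrag{\xi}}<\kappa$ at every stage where the construction proceeds, each $\ell_\xi$ is a supremum of fewer than $\kappa$ ordinals below $\kappa$, hence below $\kappa$.

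Finally I would read off the contradiction from the level $T_\alpha=T\cap{}^\alpha\kappa$, noting that $T_{\ell_\xi}$ contains the $2^{\betrag{\xi}}$ pairwise incomparable nodes $\Set{a_\tau}{\tau\in{}^\xi 2}$. If $\kappa$ is not inaccessible, fix $\mu<\kappa$ with $2^\mu\geq\kappa$; at stage $\xi=\mu$ this forces $\betrag{T_{\ell_\mu}}\geq 2^\mu\geq\kappa$, contradicting $\betrag{T_\alpha}<\kappa$. If instead $\kappa$ is inaccessible and $T$ is stationary slim, then $\ell_\xi\geq\xi$ together with continuity makes $\Set{\xi<\kappa}{\ell_\xi=\xi}$ a club, so it meets the stationary set $S=\Set{\alpha<\kappa}{\betrag{T_\alpha}\leq\betrag{\alpha}}$; for $\xi\in S$ with $\ell_\xi=\xi$ we get $\betrag{T_\xi}\geq 2^{\betrag{\xi}}>\betrag{\xi}$, contradicting stationary slimness. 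I expect the main obstacle to be the bookkeeping in the recursion that keeps all $2^{\betrag{\xi}}$ nodes of a binary level at a single tree-level while preserving invariance --- in particular, verifying that the successor-stage realignment and the limit-capture step cohere so that $\ell_\xi$ is genuinely continuous and every relevant cone stays invariant; this is precisely the point at which the retraction property, through limit-capture, is indispensable.
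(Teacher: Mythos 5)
Your proposal is correct and takes essentially the same approach as the paper: the paper's proof constructs exactly the same binary system of invariant nodes at a continuous increasing sequence of levels (its Proposition \ref{proposition:SplitPlusInclude} is your closing-off/club argument for realignment, and its limit stage is your limit-capture observation), ending with the same two case distinctions and counting contradictions. The only nitpick is that in the non-inaccessible case you should take $\mu$ to be the \emph{least} cardinal with $2^\mu\geq\kappa$, so that $2^{\betrag{\xi}}<\kappa$ holds at every stage $\xi<\mu$ and the recursion provably reaches stage $\mu$.
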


This result allows us to show that the existence of Kurepa trees implies the existence of Kurepa trees that are not retracts.

\begin{theorem}\label{theorem:NotAllRetracts}
 Let $\kappa$ be an uncountable regular cardinal with $\kappa^{<\kappa}=\kappa$. If there is a $\kappa$-Kurepa tree $S$, then there is a $\kappa$-Kurepa subtree $T$ of ${}^{{<}\kappa}\kappa$ with the property that the set $[T]$ is not a retract of ${}^\kappa\kappa$. Moreover, if $S$ is stationary slim, then $T$ can be taken to be stationary slim. 
\end{theorem}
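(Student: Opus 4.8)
The plan is to derive the result from Theorem~\ref{theorem:ResultsKurepaNoIsolated} by producing, from the hypothesised tree, a $\kappa$-Kurepa subtree $T$ of ${}^{{<}\kappa}\kappa$ whose branch space $[T]$ is \emph{perfect}, i.e. non-empty, closed and without isolated points. By the discussion in the introduction, the existence of the $\kappa$-Kurepa tree $S$ lets us assume that $S$ is already a $\kappa$-Kurepa subtree of ${}^{{<}\kappa}\kappa$, and that this reduction preserves stationary slimness. Writing $C=[S]$, we obtain a closed subset of ${}^\kappa\kappa$ with $\betrag{C}\geq\kappa^+$, and the associated tree satisfies $T_C\subseteq S$, so $\betrag{(T_C)_\alpha}\leq\betrag{S_\alpha}<\kappa$ for all $\alpha<\kappa$ and, in the stationary slim case, $\betrag{(T_C)_\alpha}\leq\betrag{\alpha}$ on a stationary set.

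The heart of the argument is a Cantor--Bendixson analysis of $C$. I would define the derivatives $C^{(\alpha)}$ by deleting isolated points at successor stages and intersecting at limit stages, and set $P=\bigcap_\alpha C^{(\alpha)}$ to be the perfect kernel. The key claim is that the scattered part $C\setminus P$ has size at most $\kappa$. To prove this, assign to each $x\in C\setminus P$ the unique stage $\gamma_x$ with $x\in C^{(\gamma_x)}\setminus C^{(\gamma_x+1)}$, so that $x$ is isolated in $C^{(\gamma_x)}$, together with a node $t_x\in T_C$ of minimal length with $N_{t_x}\cap C^{(\gamma_x)}=\{x\}$. Using that the derivatives are $\subseteq$-decreasing, one checks that $x\mapsto t_x$ is injective: if $t_x=t_y=t$ with $\gamma_x\leq\gamma_y$, then either $\gamma_x=\gamma_y$ forces $\{x\}=N_t\cap C^{(\gamma_x)}=\{y\}$, or $\gamma_x<\gamma_y$ gives $x\notin C^{(\gamma_y)}$ while $y\in N_t\cap C^{(\gamma_y)}\subseteq N_t\cap C^{(\gamma_x)}=\{x\}$, in both cases yielding $x=y$. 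Since $\betrag{T_C}\leq\kappa$, the scattered part injects into $T_C$ and so has size at most $\kappa$.

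Granting the claim, $P$ is a non-empty perfect set with $\betrag{P}\geq\kappa^+$, and I would take $T=T_P$. Then $T\subseteq T_C\subseteq S$ is a subtree of ${}^{{<}\kappa}\kappa$ with $[T]=P$; its levels satisfy $\betrag{T_\alpha}\leq\betrag{(T_C)_\alpha}<\kappa$, and it has $\betrag{P}\geq\kappa^+$ cofinal branches, so $T$ is a $\kappa$-Kurepa subtree of ${}^{{<}\kappa}\kappa$ whose branch space $[T]=P$ contains no isolated points. Moreover, $T_\alpha\subseteq(T_C)_\alpha$ shows that $T$ inherits stationary slimness from $S$.

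Finally I would assemble the conclusion by cases. If $\kappa$ is not inaccessible, then Theorem~\ref{theorem:ResultsKurepaNoIsolated} applies to $T$ through its first clause and, since $[T]$ has no isolated points, $[T]$ is not a retract of ${}^\kappa\kappa$; the same deduction through the stationary slimness clause of that theorem handles the \anf{moreover} part for every $\kappa$ once $S$, and hence $T$, is stationary slim. The only remaining case is the unconditional statement when $\kappa$ is inaccessible: here Theorem~\ref{theorem:ResultsKurepaNoIsolated} need not apply to the possibly non-slim tree $T$ (indeed ${}^\kappa 2$ is perfect yet a retract), so instead I would invoke Theorem~\ref{theorem:KurepaInaccessibleWO} with $M=\VV$, which yields a $\kappa$-Kurepa subtree of ${}^{{<}\kappa}\kappa$ whose branch space is not even a continuous image of ${}^\kappa\kappa$ and hence, as every retraction is a continuous surjection, not a retract. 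The main obstacle I anticipate is the scattered-part bound: showing that the Cantor--Bendixson process removes at most $\kappa$ branches (rather than up to $\kappa^+$) is precisely what guarantees that the perfect kernel still carries $\kappa^+$ branches and thus remains a $\kappa$-Kurepa tree.
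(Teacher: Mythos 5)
Your proposal is correct and follows essentially the same route as the paper: the paper likewise removes isolated points by a Cantor--Bendixson iteration (its Lemma \ref{lemma:KurepaNoIsolated}), then applies Theorem \ref{theorem:ResultsKurepaNoIsolated} in the non-inaccessible and stationary slim cases and falls back on Theorem \ref{theorem:KurepaInaccessibleWO} with $M=\VV$ when $\kappa$ is inaccessible. The only divergence is bookkeeping in the derivative analysis: the paper bounds the iteration by assigning pairwise disjoint sets of nodes to the stages and bounds each set of isolated points by $\kappa^{{<}\kappa}$, whereas you inject the entire scattered part into $T_C$ using the smallness of the tree's levels; both yield the same conclusion.
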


 Finally, our techniques allow us to show that for higher Kurepa trees, the property of being a continuous image neither implies the existence of isolated branches nor the property of being a retract.

\begin{theorem}\label{theorem:ContImageButNotRetracts}
 Let $\kappa$ be an uncountable regular cardinal. If there is a $\kappa$-Kurepa tree $S$ with the property that the set $[S]$ is a continuous image of ${}^\kappa\kappa$, then there is a $\kappa$-Kurepa subtree $T$ of ${}^{{<}\kappa}\kappa$ with the property that the set $[T]$ does not contain isolated points, it is a continuous image of ${}^\kappa\kappa$ and it is not a retract of ${}^\kappa\kappa$. 
 Moreover, if $S$ is stationary slim, then $T$ can be taken to be stationary slim. 
\end{theorem}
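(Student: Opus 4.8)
The plan is to obtain the non-retract conclusion from Theorem~\ref{theorem:ResultsKurepaNoIsolated} and to secure the remaining two properties by a single direct construction. Fix a continuous surjection $\map{f}{{}^\kappa\kappa}{[S]}$ witnessing the hypothesis and, replacing $S$ by an isomorphic copy as in the introduction, assume $S$ is a subtree of ${}^{{<}\kappa}\kappa$. Since Theorem~\ref{theorem:ResultsKurepaNoIsolated} yields ``not a retract'' from ``no isolated points'' whenever $\kappa$ is not inaccessible or $T$ is stationary slim, it suffices to build a $\kappa$-Kurepa subtree $T$ of ${}^{{<}\kappa}\kappa$ such that $[T]$ has no isolated points and is a continuous image of ${}^\kappa\kappa$, and such that $T$ is moreover stationary slim whenever $\kappa$ is inaccessible (in particular whenever the \emph{moreover} clause is in force). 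With such a $T$ in hand, all three asserted properties follow at once.

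First I would \emph{perfect} the tree $S$, i.e.\ build $T$ so that every node lies below a splitting, hence $[T]$ has no isolated points. The naive device of crossing $S$ with the full binary tree ${}^{{<}\kappa}2$ is unavailable, since at successor $\kappa$ this inflates the level sizes past $\kappa$ and it destroys (stationary) slimness at every cardinal; this is precisely the main obstacle. Instead I would insert the splittings \emph{amortized}: above each node I guarantee cofinally many splits, but place the splitting levels sparsely and node-dependently, dovetailing along the $\kappa$ levels so that only $<\kappa$ many distinct split-histories reach any level $\alpha<\kappa$ and $\betrag{T_\alpha}<\kappa$ persists. The $\le\kappa$ many isolated branches of $[S]$ are rerouted into such splittings using the abundance of cofinal branches through $S$ (there are at least $\kappa^+$ of them), which also keeps $\betrag{[T]}\geq\kappa^+$. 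When $\kappa$ is inaccessible, or $S$ is stationary slim, I would confine all added splitting levels to a non-stationary set, so that $\betrag{T(\alpha)}\leq\betrag{\alpha}$ continues to hold on a stationary set and $T$ is stationary slim; this realizes the \emph{moreover} clause and supplies the hypothesis needed to apply Theorem~\ref{theorem:ResultsKurepaNoIsolated} in the inaccessible case. This step is closely parallel to the branching device used in the proof of Theorem~\ref{theorem:NotAllRetracts}, and I would reuse that bookkeeping.

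Next I would verify that $[T]$ is a continuous image of ${}^\kappa\kappa$ by transferring $f$ through the canonical projection $\map{\pi}{[T]}{[S]}$ that forgets the inserted split-coordinates. Using a homeomorphism ${}^\kappa\kappa\cong{}^\kappa\kappa\times{}^\kappa\kappa$ induced by a bijection $\kappa\cong\kappa\times\kappa$, I would define $\map{q}{{}^\kappa\kappa}{[T]}$ on inputs $\langle z_0,z_1\rangle$ by letting $f(z_0)\in[S]$ fix the $\pi$-image and letting $z_1$ be read off, bit by bit, as the sequence of binary choices made at the successive splitting levels encountered along the branch over $f(z_0)$. Because the splittings are sparse and the readout at each split depends only on an initial segment of $z_1$, the map $q$ is continuous; because $f$ is onto $[S]$ and every admissible split-pattern is realized in $T$, the map $q$ is onto $[T]$. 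Here the amortized construction of the previous step is what keeps $q$ well-defined while the levels stay small.

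Finally, $T$ is a $\kappa$-Kurepa subtree of ${}^{{<}\kappa}\kappa$ with $[T]$ a continuous image of ${}^\kappa\kappa$ and without isolated points; since either $\kappa$ is not inaccessible or $T$ is stationary slim, Theorem~\ref{theorem:ResultsKurepaNoIsolated} shows that $[T]$ is not a retract of ${}^\kappa\kappa$, and the stationary-slim arrangement transfers slimness from $S$ to $T$. The hard part is the second step: arranging perfection (cofinal splitting above \emph{every} node) simultaneously with $\betrag{T_\alpha}<\kappa$ and, where required, stationary slimness and continuity of the readout map $q$. The resolution is the sparse, node-local, non-stationarily supported branching scheme described above, whose careful dovetailing is what reconciles perfection with small levels and with the transferred surjection.
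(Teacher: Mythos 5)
Your reduction has a genuine gap in exactly the case where the paper has to do extra work: $\kappa$ inaccessible and $S$ not stationary slim. You propose to handle every inaccessible $\kappa$ by arranging $T$ to be stationary slim, confining the inserted splitting levels to a non-stationary set. But that only controls the nodes you \emph{add}: your $T$ carries (on a club of levels, since the insertions are sparse) a copy of the original tree, so $\betrag{T(\alpha)}\geq\betrag{S(\bar\alpha)}$ with $\bar\alpha=\alpha$ club-often, and if $S$ fails stationary slimness on a club then so does $T$. Concretely, let $\kappa$ be inaccessible and $S={}^{{<}\kappa}2$: the hypothesis of the theorem holds ($[S]={}^\kappa 2$ is a retract, hence a continuous image), $[S]$ already has no isolated points, and your construction returns essentially $S$ itself, which is not stationary slim ($\betrag{S(\alpha)}=2^{\betrag{\alpha}}>\betrag{\alpha}$ everywhere) and whose branch set genuinely \emph{is} a retract of ${}^\kappa\kappa$. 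So Theorem~\ref{theorem:ResultsKurepaNoIsolated} is inapplicable and your conclusion is simply false for that output. Nor can you dodge this by switching to a different stationary slim Kurepa tree: such trees need not exist at inaccessibles at all (slim Kurepa trees fail at ineffables by Jensen--Kunen, and the paper needs a $2$-Mahlo just to \emph{force} a stationary slim example in Theorem~\ref{theorem:StatSlimImage}), so the case cannot be legislated away.

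This missing case is precisely where the paper's proof departs from Theorem~\ref{theorem:ResultsKurepaNoIsolated} and argues non-retractness directly. After building $T$ from $S$ (not by your global splitting-insertion with re-indexed levels, but by gluing onto each isolated branch $z$, above a node $\iota(z)$ isolating it, the small cone of finite modifications of $z$ --- which keeps $T\supseteq S$ with levels enlarged by only $\betrag{\alpha}+\omega$, preserves stationary slimness, and lets the given surjection $c$ be redirected on small open sets $N_{\rho(z)}$ where $c$ is constant with value $z$, reading one extra coordinate of the input to enumerate the new branches), the paper notes that if this $[T]$ were a retract, then by Theorem~\ref{theorem:ResultsKurepaNoIsolated} we are in the bad case ($\kappa$ inaccessible, $S$ not stationary slim). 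It then replaces the tree entirely: starting from $U=\Set{u\in{}^{{<}\kappa}2}{(\exists\alpha\in\dom{u}~u(\alpha)=1)\rightarrow u(0)=1}$, whose branch set is a retract with a single isolated branch, it applies the same fattening to obtain $W$ and shows by an $\omega$-step diagonalization via Proposition~\ref{proposition:SplitPlusInclude} that $[W]$ is \emph{not} a retract: any retraction $r$ is trapped on a point $x$ whose image $r(x)$ must vanish below $\beta_0$ yet have infinitely many nonzero values, which no element of $[W]$ does. Your proposal contains no substitute for this argument; the remaining parts of your plan (killing isolated points while keeping levels small, and transferring the surjection by reading split choices off a second coordinate) are workable variants of the paper's steps, though the continuity of your readout map, whose splitting levels depend on $f(z_0)$, would still need careful verification.
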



\section{Wide subtrees}

The following result from \cite{luecke-schlicht-continuous-images} will be our main tool for showing that the closed subsets induced by certain Kurepa trees are not continuous images of the whole space.

 \begin{theorem}[{\cite[Theorem 7.1]{luecke-schlicht-continuous-images}}]\label{theorem:SurjImageWideLevels}
  Let 
  $\kappa$ be an uncountable regular, let $A$ be an unbounded subset of $\kappa$ and let $T$ be a subtree of ${}^{{<}\kappa}\kappa$. 
 If $\mu$ is a cardinal with $\mu^{{<}\kappa}<\betrag{[T]}$ and $\map{c}{{}^\kappa\mu}{[T]}$ is a continuous surjection, then there is a strictly increasing sequence $\seq{\lambda_n\in A}{n<\omega}$ with least upper bound $\lambda$ and an injection
  \begin{equation*}
    \map{i}{\prod_{n<\omega}\lambda_n}{T(\lambda)}.
  \end{equation*}
 such that 
 \begin{equation*}
    x\restriction n = y\restriction n ~ \Longleftrightarrow ~ i(x)\restriction\lambda_n = i(y)\restriction\lambda_n
 \end{equation*}
 holds for all $x,y\in\prod_{n<\omega}\lambda_n$ and all  $n<\omega$. 
 \end{theorem}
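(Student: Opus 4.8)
The plan is to extract the sequence $\seq{\lambda_n}{n<\omega}$ together with maps $\map{j_n}{\prod_{m<n}\lambda_m}{T(\lambda_n)}$ (and then set $i(x)=\bigcup_n j_n(x\restriction n)$) by an $\omega$-step recursion in which I simultaneously maintain a nested system of conditions $s_\sigma\in{}^{{<}\kappa}\mu$ indexed by $\sigma\in\prod_{m<n}\lambda_m$. Write $\theta=\mu^{{<}\kappa}$. Each $s_\sigma$ will \emph{decide level $\lambda_n$} with value $j_n(\sigma)$, meaning that $c(w)\restriction\lambda_n=j_n(\sigma)$ for all $w\in N_{s_\sigma}$; the $s_\sigma$ will be nested along $\subseteq$ on indices; the $j_n$ will be injective and coherent, so that $j_{n+1}(\sigma^\frown\langle\xi\rangle)\restriction\lambda_n=j_n(\sigma)$; and the crucial invariant will be that $c[N_{s_\sigma}]$ has more than $\theta$ branches. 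Before starting I record two facts coming from continuity of $c$ and $\betrag{{}^{{<}\kappa}\mu}=\theta$: each level $T(\alpha)$ contains at most $\theta$ nodes met by $[T]$, since every such node equals $c(w)\restriction\alpha$ for some $w$ and, by continuity, this value depends only on a bounded part of $w$; and the preimage $U_t=\Set{w}{t\subseteq c(w)}$ of each $N_t$ is open.

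The engine of the recursion is a splitting lemma: if $B\subseteq[T]$ satisfies $\betrag{B}>\theta$, then for every $\lambda<\kappa$ the set of levels $\beta$ at which at least $\lambda$ many nodes $t$ are \emph{$B$-large} (i.e. $\betrag{B\cap N_t}>\theta$) is co-bounded in $\kappa$. I plan to prove this by counting. First an auxiliary fact: for any family $X$ of cofinal branches, $\betrag{X}=\sup_\beta\betrag{\Set{z\restriction\beta}{z\in X}}$, since the number of distinct restrictions is non-decreasing in $\beta$ and its classes refine to equality. Now I split $B$ into the branches that stay $B$-large at every level — these are the cofinal branches of the tree $L$ of $B$-large nodes — and the branches that first enter a non-large node at some level. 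The second kind is governed by at most $\theta$ first-small nodes (at successor levels these are successors of the at most $\theta$ many large nodes, using that a node of ${}^{{<}\kappa}\kappa$ has at most $\kappa\le\theta$ successors; at limit levels they sit on large threads), each carrying at most $\theta$ branches, hence at most $\theta$ branches in total. For the first kind, if the conclusion failed there would be cofinally — hence, by monotonicity of restriction-counts, always — fewer than $\lambda$ nodes met by $[L]$ at each level, so $\betrag{[L]}\le\lambda\le\theta$ by the auxiliary fact. Together these give $\betrag{B}\le\theta$, a contradiction.

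With the lemma in hand, the successor step runs as follows. Given the $s_\sigma$ for $\sigma\in\prod_{m<n}\lambda_m$, each $B_\sigma=c[N_{s_\sigma}]$ has more than $\theta$ branches, so for each $\sigma$ the set of levels carrying at least $\lambda_n$ many $B_\sigma$-large nodes is co-bounded; intersecting these $\betrag{\prod_{m<n}\lambda_m}<\kappa$ many co-bounded sets and using that $A$ is unbounded, I choose a common $\lambda_{n+1}\in A$ above $\lambda_n$. At level $\lambda_{n+1}$ each $B_\sigma$ then has at least $\lambda_n$ many $B_\sigma$-large nodes, all extending $j_n(\sigma)$ because $B_\sigma\subseteq N_{j_n(\sigma)}$; I pick $\lambda_n$ of them as the values $j_{n+1}(\sigma^\frown\langle\xi\rangle)$, $\xi<\lambda_n$. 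To define the new conditions while preserving the largeness invariant I use that $c[N_{s_\sigma}]\cap N_t$, which has more than $\theta$ branches for such a $B_\sigma$-large $t$, equals $\bigcup\Set{c[N_{s'}]}{s'\supseteq s_\sigma,\ N_{s'}\subseteq U_t}$, a union of at most $\theta$ pieces; hence some such $s'$ has $\betrag{c[N_{s'}]}>\theta$, and I set $s_{\sigma^\frown\langle\xi\rangle}=s'$.

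Finally, for $x\in\prod_n\lambda_n$ the conditions $s_{x\restriction n}$ are nested, their union has length below $\kappa$, and any $w_x\in{}^\kappa\mu$ extending it satisfies $c(w_x)\restriction\lambda_n=j_n(x\restriction n)$ for all $n$; thus $c(w_x)\restriction\lambda=\bigcup_n j_n(x\restriction n)$ lies in $T(\lambda)$, and I let $i(x)$ be this node. Since $i(x)\restriction\lambda_n=j_n(x\restriction n)$ with each $j_n$ injective, the biconditional $x\restriction n=y\restriction n\Leftrightarrow i(x)\restriction\lambda_n=i(y)\restriction\lambda_n$ is immediate, and injectivity of $i$ follows. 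I expect the main obstacle to be the splitting lemma, specifically controlling how $B$-largeness can fail to persist through limit levels: the co-bounded (rather than merely cofinal) form of the lemma is exactly what lets me choose a single level $\lambda_{n+1}$ for all $\sigma$ at once, and obtaining it requires the separate accounting of the always-large branches through the tree $L$.
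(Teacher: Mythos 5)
The paper itself contains no proof of this statement --- it is imported verbatim from {\cite[Theorem 7.1]{luecke-schlicht-continuous-images}} --- so your attempt can only be judged on its own terms, and on those terms the fusion architecture is sound: the bound of $\theta=\mu^{{<}\kappa}$ nodes met by $[T]$ per level via continuity, the largeness invariant $\betrag{c[N_{s_\sigma}]}>\theta$, the decomposition $c[N_{s_\sigma}]\cap N_t=\bigcup\Set{c[N_{s'}]}{s'\supseteq s_\sigma,\ N_{s'}\subseteq U_t}$ into at most $\theta$ pieces, the monotonicity of large-node and restriction counts, the intersection of fewer than $\kappa$ co-bounded sets of levels, and the limit step all check out, and the biconditional does follow from coherence and injectivity of the $j_n$ exactly as you say. (The base step --- choosing $\lambda_0\in A$ and a condition $s_\emptyset$ deciding level $\lambda_0$ with $\betrag{c[N_{s_\emptyset}]}>\theta$ --- is omitted, but it is the same ``union of at most $\theta$ pieces'' argument.)

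The genuine problem is your auxiliary fact: $\betrag{X}=\sup_\beta\betrag{\Set{z\restriction\beta}{z\in X}}$ is \emph{false} for sets of cofinal branches, and the justification offered (``the classes refine to equality'') is an invalid exchange of limits: a refining $\kappa$-sequence of partitions, each with few classes, can have discrete limit on a set far larger than the supremum of the class counts. Counterexamples are $X={}^\kappa 2$ whenever $2^{{<}\kappa}<2^\kappa$, and --- pointedly, since they are the subject of this paper --- $X=[K]$ for any $\kappa$-Kurepa tree $K$, where every level carries fewer than $\kappa$ nodes yet $\betrag{X}\geq\kappa^+$. Fortunately, the single instance you actually invoke (all levels of $L$ carry fewer than $\lambda$ nodes, with $\lambda<\kappa$, conclude $\betrag{[L]}\leq\lambda$) is true, but for a reason your proposal never supplies, namely the regularity of $\kappa$: if $[L]$ contained $\lambda$ distinct cofinal branches, then the $\lambda$ many pairwise splitting levels would be bounded by some $\beta<\kappa$, since $\lambda<\kappa=\cof{\kappa}$, and level $\beta$ of $L$ would already carry $\lambda$ distinct nodes, a contradiction; hence in fact $\betrag{[L]}<\lambda$. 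With this one-line repair inserted, your splitting lemma, and with it the whole recursion, is correct. But note that the restriction $\lambda<\kappa$ is precisely what separates the instance you need from the false general statement, and as written your argument never records where that hypothesis enters --- had you applied the ``fact'' with $\lambda=\kappa$, or to bound $\betrag{[T]}$ itself, the proof would collapse at exactly the Kurepa-tree phenomenon the theorem is designed to detect.
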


The above lemma allows us to provide short proofs of  two theorems presented in Section \ref{subsection:NoImage}.

\begin{proof}[Proof of Theorem \ref{theorem:Main-Negative}]
 Let $\kappa$ be an uncountable regular cardinal with $\mu^\omega\geq\kappa$ for some $\mu<\kappa$ and let $T$ be a $\kappa$-Kurepa subtree of ${}^{{<}\kappa}\kappa$ with $\betrag{[T]}>\kappa^{{<}\kappa}$. 
 Assume, towards a contradiction, that the set $[T]$ is a continuous image of ${}^\kappa\kappa$.  
Since we have $\kappa^{{<}\kappa}<\betrag{[T]}$, we can apply Theorem \ref{theorem:SurjImageWideLevels} to find a strictly increasing sequence $\seq{\lambda_n}{n<\omega}$ of ordinals in the interval $(\mu,\kappa)$ with least upper bound $\lambda$ and an injection $\map{i}{\prod_{n<\omega}\lambda_n}{T(\lambda)}$. 
 Moreover, since $\lambda_n>\mu$ holds for all $n<\omega$, we can conclude that $$\betrag{T(\lambda)} ~ \geq ~ \betrag{\prod_{n<\omega}\lambda_n} ~ \geq ~ \mu^\omega ~ \geq ~ \kappa,$$ contradicting the fact that $T$ is a $\kappa$-Kurepa subtree of ${}^{{<}\kappa}\kappa$. 
\end{proof}

\begin{proof}[Proof of Theorem \ref{theorem:Kurepa tree at inaccessibles}]
 Let $\kappa$ be an inaccessible cardinal and let $T$ be a slim $\kappa$-Kurepa subtree of ${}^{{<}\kappa}\kappa$. Assume, towards a contradiction, that the set $[T]$ is a continuous image of ${}^\kappa\kappa$. 
Pick $\alpha<\kappa$ with the property that $\betrag{T(\beta)}=\betrag{\beta}$ holds for all $\alpha\leq\beta<\kappa$.  Using Theorem \ref{theorem:SurjImageWideLevels}, we find a strictly increasing sequence $\seq{\lambda_n}{n<\omega}$ of cardinals in the interval $(\alpha,\kappa)$ with least upper bound $\lambda$ and an injection $\map{i}{\prod_{n<\omega}\lambda_n}{T(\lambda)}$. Then  K\"onig's Theorem allows us to conclude that $$\betrag{T(\lambda)} ~ \geq ~ \betrag{\prod_{n<\omega}\lambda_n} ~ 
> ~ \betrag{\sum_{n<\omega}\lambda_n} ~ = ~ \lambda,$$ a contradiction. 
\end{proof}

In the remainder of this section, we prove Theorem \ref{theorem:ConsStrengthContImage}. Our arguments will be based on the concept introduced in the next definition.

\begin{definition}(\cite[Section 1]{MR631563}) 
 A tree $T$ has a \emph{Cantor subtree} if there is a strictly increasing sequence $\seq{\lambda_n}{n<\omega}$ with $\lambda=\sup_{n<\omega}\lambda_n<\height{T}$ and an uncountable subset $B$ of $T(\lambda)$ with the property that the set $\Set{s\in T(\lambda_n)}{\exists t\in B ~ s<_T t}$ is countable for every $n<\omega$. 
\end{definition}

The next statement is a direct consequence of Theorem \ref{theorem:SurjImageWideLevels}.

\begin{corollary}\label{corollary:ContImageCantosSubtree}
 Let $\kappa$ be an uncountable regular cardinal and and let $T$ be a subtree of ${}^{{<}\kappa}\kappa$ with $\betrag{[T]}>\kappa^{{<}\kappa}$. 
 If $[T]$ is a continuous image of ${}^\kappa\kappa$, then $T$ contains a Cantor subtree. \qed 
\end{corollary}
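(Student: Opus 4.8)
The plan is to derive the corollary directly from Theorem \ref{theorem:SurjImageWideLevels} by taking $\mu=\kappa$. First I would check that the hypotheses line up for this choice: we have $\mu^{{<}\kappa}=\kappa^{{<}\kappa}<\betrag{[T]}$ by assumption, and, again by assumption, $[T]$ is a continuous image of ${}^\kappa\kappa={}^\kappa\mu$, so there is a continuous surjection $\map{c}{{}^\kappa\kappa}{[T]}$. I would then apply the theorem with the unbounded set $A=\Set{\alpha<\kappa}{\alpha\geq\omega}$, obtaining a strictly increasing sequence $\seq{\lambda_n}{n<\omega}$ in $A$ with least upper bound $\lambda$ together with an injection $\map{i}{\prod_{n<\omega}\lambda_n}{T(\lambda)}$ satisfying the stated equivalence $x\restriction n=y\restriction n\Longleftrightarrow i(x)\restriction\lambda_n=i(y)\restriction\lambda_n$.

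Before building the Cantor subtree, I would record two routine facts. Since $\betrag{[T]}>\kappa^{{<}\kappa}\geq 1$, the set $[T]$ is nonempty, so $T(\alpha)\neq\emptyset$ for every $\alpha<\kappa$ and hence $\height{T}=\kappa$. As $\kappa$ is an uncountable regular cardinal it has uncountable cofinality, so the countable supremum $\lambda=\sup_{n<\omega}\lambda_n$ satisfies $\lambda<\kappa=\height{T}$, exactly as required by the definition of a Cantor subtree. Moreover, every $\lambda_n\in A$ is infinite, so $\{0,1\}\subseteq\lambda_n$ and the set ${}^\omega 2$ embeds into the domain $\prod_{n<\omega}\lambda_n$ of $i$.

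The key step is the choice of the witnessing set $B$. The naive choice $B=\ran{i}$ does \emph{not} work, because the set of its level-$\lambda_n$ predecessors would have size $\betrag{\prod_{m<n}\lambda_m}$, which may well be uncountable. Instead I would set $B=i[{}^\omega 2]$, the image of the binary sequences. Since $i$ is injective and ${}^\omega 2$ is uncountable, $B$ is an uncountable subset of $T(\lambda)$. For the countability requirement, fix $n<\omega$. As $T$ is a subtree of ${}^{{<}\kappa}\kappa$, any $s\in T(\lambda_n)$ lying below some $t=i(x)\in B$ is precisely the restriction $i(x)\restriction\lambda_n$, so $\Set{s\in T(\lambda_n)}{\exists t\in B ~ s<_T t}=\Set{i(x)\restriction\lambda_n}{x\in{}^\omega 2}$. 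By the equivalence property of $i$, the map $x\mapsto i(x)\restriction\lambda_n$ has the same fibres as $x\mapsto x\restriction n$, whence this set has size $2^n$ and is in particular countable. Thus $\seq{\lambda_n}{n<\omega}$, $\lambda$ and $B$ witness that $T$ has a Cantor subtree.

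I expect no serious obstacle here: the only point requiring care — and the reason this is a genuine corollary rather than an immediate restatement of Theorem \ref{theorem:SurjImageWideLevels} — is the realization that one must pass from the full range of $i$ to the image of ${}^\omega 2$, so that the predecessor sets at each level $\lambda_n$ become finite while $B$ stays uncountable. The equivalence clause in Theorem \ref{theorem:SurjImageWideLevels} is exactly the ingredient that converts the statement \anf{$x$ has few predecessors} into \anf{$i(x)$ has few predecessors}.
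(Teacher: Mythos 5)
Your proof is correct and is exactly the argument the paper has in mind: the paper states this corollary as a direct consequence of Theorem \ref{theorem:SurjImageWideLevels} (applied with $\mu=\kappa$, which is legitimate since $\kappa^{{<}\kappa}<\betrag{[T]}$), and your passage from $\ran{i}$ to $B=i[{}^\omega 2]$ — so that the equivalence clause makes each predecessor set at level $\lambda_n$ have size $2^n$ while $B$ remains uncountable — is precisely the routine verification the paper suppresses under the \qed.
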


\begin{proof}[Proof of Theorem \ref{theorem:ConsStrengthContImage}]
 Let $\kappa$ be an uncountable regular cardinal with $\kappa=\kappa^{{<}\kappa}$ and the property that neither $\kappa$ nor $\kappa^+$ are inaccessible in $\LL$.  
   
  \begin{claim*}
   There is a simplified $(\kappa,1)$-morass with linear limits (see {\cite[Section 2]{MR771773}}). 
  \end{claim*}
  
  \begin{proof}[Proof of the Claim]
   As discussed at the end of \cite{MR1049849}, our assumption allows us to find $A\subseteq\kappa$ such that $\kappa^+=(\kappa^+)^{\LL[A]}$,  Donder's construction of a simplified $(\kappa,1)$-morass with linear limits in \cite{MR791060} can be carried out in $\LL[A]$ and the resulting morass is also a  simplified $(\kappa,1)$-morass with linear limits in $\VV$.  
  \end{proof}
  
  By the above claim, we can apply {\cite[Theorem 4.3]{MR771773}} to find a $\kappa$-Kurepa subtree $T$ of ${}^{{<}\kappa}\kappa$ without Cantor subtrees. By Corollary \ref{corollary:ContImageCantosSubtree}, we know that the closed set $[T]$ is not a continuous image of ${}^\kappa\kappa$. 
\end{proof}


\section{Trees induced by inner models}

This section is devoted to the proof of Theorem \ref{theorem:KurepaInaccessibleWO}. Our arguments are a variation of the proof of {\cite[Theorem 1.5]{luecke-schlicht-continuous-images}}. 
In addition, we will show that the statement of Corollary \ref{corollary:ContImageCantosSubtree} can, in general, not be reversed.

\begin{proof}[Proof of Theorem \ref{theorem:KurepaInaccessibleWO}]
 In the following, we let $\map{\goedel{\cdot}{\cdot}}{\Ord\times\Ord}{\Ord}$ denote the \emph{G\"odel pairing function}. Given an ordinal $\gamma$ closed under $\goedel{\cdot}{\cdot}$ and $x\in{}^\gamma 2$, we define $<_x$ to be the unique binary relation on $\gamma$ with $$\alpha<_x\beta ~ \Longleftrightarrow ~ x(\goedel{\alpha}{\beta})=1$$ for all $\alpha,\beta<\gamma$.  
 
 Let $\kappa$ be a cardinal satisfying $\kappa=\kappa^{{<}\kappa}$ and let $M$ be an inner model with the property that $\kappa$ is inaccessible in $M$ and $(\kappa^+)^M=\kappa^+$ holds. Work in $M$ and set $$W ~ = ~ \Set{x\in{}^\kappa 2}{\textit{$(\kappa,<_x)$ is a well-order}}.$$ Then it is easy to see that the fact that $\kappa$ is uncountable and regular implies that $W$ is a closed subset of ${}^\kappa\kappa$ and hence there is a subtree $T$ of ${}^{{<}\kappa}2$ with $W=[T]$.  Since $\betrag{W}=2^\kappa$, $T\subseteq{}^{{<}\kappa}2$ and $\kappa$ is inaccessible, we can conclude that $T$ is a $\kappa$-Kurepa subtree of ${}^{{<}\kappa}\kappa$. 

 Now, work in $\VV$. Then our assumptions on $M$ imply that $T$ is still a $\kappa$-Kurepa subtree of ${}^{{<}\kappa}\kappa$. Moreover, if $x\in[T]$, then the regularity of $\kappa$ implies that $(\kappa,<_x)$ is a well-order. 
 Given $x\in[T]$ and $\alpha<\kappa$, we let $\mathrm{rnk}_x(\alpha)$ denote the rank of $\alpha$ in the well-order $(\kappa,<_x)$. 
Assume, towards a contradiction, that the set $[T]$ is a continuous image of ${}^\kappa\kappa$. Then a combination of {\cite[Lemma 2.2]{luecke-schlicht-continuous-images}} with {\cite[Lemma 2.3]{luecke-schlicht-continuous-images}} yields a ${<}\kappa$-closed subtree $U$ of ${}^{{<}\kappa}\kappa\times{}^{{<}\kappa}\kappa$ without end nodes\footnote{A subtree $S$ of $({}^{{<}\kappa}\kappa)^n$ is \emph{${<}\kappa$-closed} if for every $\lambda<\kappa$ and every $\lhd$-increasing sequence $\seq{\langle s_0^\xi,\ldots,s_{n-1}^\xi\rangle}{\xi<\lambda}$ in $S$, the tuple $\langle\bigcup_{\xi<\lambda}s^\xi_0,\ldots,\bigcup_{\xi<\lambda}s^\xi_{n-1}\rangle$ is also an element of $S$. We call an element of such a tree $S$ an \emph{end node} if it is $\lhd$-maximal in $S$.} such that $$[T] ~ = ~ p[U] ~ = ~ \Set{x\in{}^\kappa\kappa}{\exists y ~ \langle x,y\rangle\in[U]}.$$ Note that these properties of $U$ imply that for every $\langle t,u\rangle\in U$, there exists  a pair $\langle x,y\rangle\in[U]$ with $t\subseteq x$ and $u\subseteq y$. Given $\langle t,u\rangle\in U$ and $\alpha<\kappa$, we define $$r(t,u,\alpha) ~ = ~ \sup\Set{\mathrm{rnk}_x(\alpha)}{\langle x,y\rangle\in [U], ~ t\subseteq x, ~ u\subseteq y} ~ \leq ~ \kappa^+.$$ Then $r(\emptyset,\emptyset,\alpha)=\kappa^+$ holds for all $\alpha<\kappa$, because the assumptions $(\kappa^+)^M=\kappa^+$ implies that for every $\gamma<\kappa^+$, there exists $x\in[T]^M\subseteq[T]=p[U]$ with $\mathrm{rnk}_x(\alpha)\geq\gamma$ and hence there exists $y\in{}^\kappa\kappa$ such that the pair $\langle x,y\rangle$ witnesses that $r(\emptyset,\emptyset,\alpha)\geq\gamma$. 

 \begin{claim*}
  If $\gamma<\kappa^+$, $(t,u)\in U$ and $\alpha<\kappa$ with $r(t,u,\alpha)=\kappa^+$, then there is $(v,w)\in U$ and $\alpha<\beta<\dom{v}$ such that $t\subsetneq v$, $u\subsetneq w$, $\dom{v}$ is closed under $\goedel{\cdot}{\cdot}$, $\beta<_v\alpha$ and $r(v,w,\beta)\geq\gamma$.  
 \end{claim*}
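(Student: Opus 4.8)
The plan is to read off the conclusion from a single branch of $U$ on which $\alpha$ has extremely large rank, then to locate a suitable predecessor $\beta$ inside that branch, and finally to truncate the branch at a closure point. Since each value $\mathrm{rnk}_x(\alpha)$ is an ordinal below $\kappa^+$ while their supremum $r(t,u,\alpha)$ equals $\kappa^+$, these values are unbounded in $\kappa^+$; as $\gamma+\kappa<\kappa^+$, I would first fix a pair $\langle x,y\rangle\in[U]$ with $t\subseteq x$, $u\subseteq y$ and $\mathrm{rnk}_x(\alpha)\geq\gamma+\kappa$. Here $(\kappa,<_x)$ is a genuine well-order by the regularity of $\kappa$ recorded before the claim, so $\mathrm{rnk}_x$ is well defined.

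The heart of the argument --- and the step I expect to be the main obstacle --- is to locate inside the $<_x$-predecessors of $\alpha$ an ordinal $\beta$ meeting all three requirements at once: $\beta<_x\alpha$, $\mathrm{rnk}_x(\beta)\geq\gamma$, and the ordinal inequality $\alpha<\beta$. Dropping the last requirement the choice is trivial, since any $<_x$-predecessor of $<_x$-rank at least $\gamma$ works and such exist because $\mathrm{rnk}_x(\alpha)>\gamma$. The reason for demanding $\mathrm{rnk}_x(\alpha)\geq\gamma+\kappa$ is precisely to accommodate $\alpha<\beta$: writing $P=\Set{\delta<\kappa}{\delta<_x\alpha}$, those members of $P$ that happen to be ordinals below $\alpha$ lie inside $\alpha$ and so number at most $\betrag{\alpha}<\kappa$, hence realize fewer than $\kappa$ of the $<_x$-ranks. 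Since the ranks lying in the interval $[\gamma,\mathrm{rnk}_x(\alpha))$ number at least $\kappa$, some rank $\xi$ in this interval is realized by a predecessor $\beta$ that is not an ordinal below $\alpha$; as $\alpha\notin P$, this forces $\alpha<\beta$. This $\beta$ then satisfies $\beta<_x\alpha$ and $\mathrm{rnk}_x(\beta)=\xi\geq\gamma$.

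Finally I would truncate. Using that the ordinals below $\kappa$ closed under $\goedel{\cdot}{\cdot}$ form a club, I would pick such a $\lambda<\kappa$ with $\lambda>\beta$, $\lambda>\dom{t}$ and $\lambda>\dom{u}$, and set $v=x\restriction\lambda$ and $w=y\restriction\lambda$. Because $\langle x,y\rangle\in[U]$ and $U$ is a subtree, $\langle v,w\rangle\in U$; the choice of $\lambda$ gives $t\subsetneq v$ and $u\subsetneq w$, makes $\dom{v}=\lambda$ closed under $\goedel{\cdot}{\cdot}$, and yields $\alpha<\beta<\dom{v}$. Since $\goedel{\beta}{\alpha}<\lambda$, we have $v(\goedel{\beta}{\alpha})=x(\goedel{\beta}{\alpha})=1$, that is, $\beta<_v\alpha$. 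And as $\langle x,y\rangle$ extends $\langle v,w\rangle$, it is among the pairs entering the supremum defining $r(v,w,\beta)$, so $r(v,w,\beta)\geq\mathrm{rnk}_x(\beta)\geq\gamma$. These are exactly the asserted properties of $\langle v,w\rangle$ and $\beta$.
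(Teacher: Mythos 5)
Your proposal is correct and takes essentially the same route as the paper's proof: fix $\langle x,y\rangle\in[U]$ through $(t,u)$ with $\mathrm{rnk}_x(\alpha)\geq\gamma+\kappa$, locate $\beta>\alpha$ with $\beta<_x\alpha$ and $\mathrm{rnk}_x(\beta)\geq\gamma$, and then truncate $\langle x,y\rangle$ at an ordinal closed under G\"odel pairing that exceeds $\beta$ and $\dom{t}$. The only difference is that you spell out the counting argument producing $\beta$ (fewer than $\kappa$ of the $<_x$-predecessors of $\alpha$ lie below $\alpha$, while at least $\kappa$ many ranks in $[\gamma,\mathrm{rnk}_x(\alpha))$ must be realized), a step the paper asserts without detail.
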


 \begin{proof}[Proof of the Claim]
  By our assumptions, we can find $\langle x,y\rangle\in U$ with $t\subseteq x$, $u\subseteq y$ and $\mathrm{rnk}_x(\alpha)\geq\gamma+\kappa$. Then there is $\alpha<\beta<\kappa$ with $\beta<_x\alpha$ and $\mathrm{rnk}_x(\beta)\geq\gamma$. 
  Pick $\xi>\beta+\dom{t}$ closed under $\goedel{\cdot}{\cdot}$. Then $t\subseteq x\restriction\xi$, $u\subsetneq y\restriction\xi$, $\beta<_{x\restriction\xi}\alpha$ and $r(x\restriction\xi,y\restriction\xi,\beta)\geq\gamma$.  
 \end{proof}

 \begin{claim*}
  If $\langle t,u\rangle\in U$ and $\alpha<\kappa$ with $r(t,u,\alpha)=\kappa^+$, then there exists $(v,w)\in U$ and $\alpha<\beta<\dom{v}$ such that $t\subsetneq v$, $u\subsetneq w$, $\dom{v}$ is closed under $\goedel{\cdot}{\cdot}$, $\beta<_v\alpha$ and $r(v,w,\beta)=\kappa^+$.  
 \end{claim*}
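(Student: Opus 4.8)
The plan is to deduce this strengthened claim from the immediately preceding one by a boundedness argument, exploiting the regularity of $\kappa^+$ together with the cardinal arithmetic hypothesis $\kappa=\kappa^{{<}\kappa}$ of Theorem \ref{theorem:KurepaInaccessibleWO}. I would argue by contradiction: assuming that no witness whose $r$-value equals $\kappa^+$ exists, I collect all the \emph{candidate} witnesses into a single set whose size is controlled by $\kappa$, and then show that the previous Claim forces a witness of arbitrarily large $r$-value inside this set, which is impossible once the set is small.

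First I would fix $\langle t,u\rangle\in U$ and $\alpha<\kappa$ with $r(t,u,\alpha)=\kappa^+$ and suppose, towards a contradiction, that the conclusion fails. Let $R$ be the set of all triples $\langle v,w,\beta\rangle$ with $\langle v,w\rangle\in U$, $t\subsetneq v$, $u\subsetneq w$, $\dom{v}$ closed under $\goedel{\cdot}{\cdot}$, $\alpha<\beta<\dom{v}$ and $\beta<_v\alpha$. This $R$ is precisely the collection of triples that can arise as the output of both the previous Claim and the present one. Since $r(v,w,\beta)\leq\kappa^+$ always holds by definition, the failure of the conclusion means exactly that $r(v,w,\beta)<\kappa^+$ for every $\langle v,w,\beta\rangle\in R$.

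The key step is a counting observation. As $v$ and $w$ range over ${}^{{<}\kappa}\kappa$ and $\beta$ over ordinals below $\kappa$, the assumption $\kappa=\kappa^{{<}\kappa}$ gives $\betrag{{}^{{<}\kappa}\kappa}=\kappa$ and hence $\betrag{R}\leq\kappa$. Because $\kappa^+$ is regular and each $r(v,w,\beta)$ lies strictly below $\kappa^+$, the ordinal $\gamma^*=\sup\Set{r(v,w,\beta)+1}{\langle v,w,\beta\rangle\in R}$ is still strictly below $\kappa^+$. Now I would apply the previous Claim to $\langle t,u\rangle$, $\alpha$ and this specific value $\gamma=\gamma^*$: it produces $\langle v,w\rangle\in U$ and $\alpha<\beta<\dom{v}$ having all the listed properties and satisfying $r(v,w,\beta)\geq\gamma^*$. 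But the resulting triple lies in $R$, so by the very definition of $\gamma^*$ we have $r(v,w,\beta)<\gamma^*$, a contradiction. This completes the argument.

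I do not anticipate a genuine obstacle here; the argument is a boundedness/pigeonhole reduction. The only points demanding care are to set up the index set $R$ so that the witness supplied by the previous Claim is guaranteed to belong to it (so that $\gamma^*$ really bounds its $r$-value), and to verify that $\betrag{R}\leq\kappa$ follows from $\kappa=\kappa^{{<}\kappa}$ rather than from some stronger hypothesis.
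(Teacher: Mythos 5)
Your proof is correct and is essentially the paper's argument in contrapositive form: both exploit $\kappa=\kappa^{{<}\kappa}$ to see that there are only $\kappa$-many candidate triples $\langle v,w,\beta\rangle$, and play this count against the regularity (unboundedness) of $\kappa^+$. The paper applies the previous claim $\kappa^+$-many times and pigeonholes to find one fixed triple serving a set of $\gamma$'s of size $\kappa^+$, whereas you take the supremum of the $r$-values over all candidate triples and apply the previous claim once with that supremum; these are interchangeable formulations of the same counting argument.
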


  \begin{proof}[Proof of the Claim]
   For each $\gamma<\kappa^+$, let $\langle v_\gamma,w_\gamma\rangle\in U$ and $\alpha<\beta_\gamma<\dom{v_\gamma}$ be the objects given by the above claim. 
   Since $\kappa=\kappa^{{<}\kappa}$ holds, we can find $\langle v,w\rangle\in U$, $\beta<\kappa$ and $X\subseteq\kappa^+$ with $\betrag{X}=\kappa^+$ such that $v_\gamma=v$, $w_\gamma=w$ and $\beta_\gamma=\beta$ for all $\gamma\in X$. But this implies that $r(v,w,\beta)=\kappa^+$. 
  \end{proof}

 Using the last claim, we now construct sequences $\seq{\langle t_n,u_n\rangle\in U}{n<\omega}$ and $\seq{\alpha_n<\kappa}{n<\omega}$ such that $\dom{t_{n+1}}$ is closed under $\goedel{\cdot}{\cdot}$, $t_n\subsetneq t_{n+1}$, $u_n\subsetneq u_{n+1}$, $\alpha_n<\alpha_{n+1}<\dom{t_{n+1}}$ and $\alpha_{n+1}<_{t_{n+1}}\alpha_n$ for all $n<\omega$. Set $t=\bigcup_{n<\omega}t_n$ and $u=\bigcup_{n<\omega}u_n$. Then the properties of $U$ imply that $\langle t,u\rangle\in U$ and there exists $x\in[T]$ with $t\subseteq x$. But then $(\kappa,<_x)$ is a well-order with $\alpha_{n+1}<_x\alpha_n$ for all $n<\omega$, a contradiction. 
\end{proof}

 As promised above, we end this section by showing that, for certain Kurepa trees, the converse of the statement of Corollary \ref{corollary:ContImageCantosSubtree} does not hold true.

 \begin{corollary}
  Let $\mu$ be an uncountable regular cardinal, let $\theta>\kappa$ be inaccessible cardinals above $\mu$, let $G$ be $\Coll{\kappa}{{<}\theta}$-generic over $\VV$ and let $H$ be $\Coll{\mu}{{<}\kappa}$-generic over $\VV[G]$. Then the following statements hold in $\VV[G,H]$: 
 \begin{enumerate}
  \item There is a $\kappa$-Kurepa subtree $T$ of ${}^{{<}\kappa}\kappa$ with the property that the set $[T]$ is not a continuous image of ${}^\kappa\kappa$. 

  \item Every $\kappa$-Kurepa tree contains a Cantor subtree. 
 \end{enumerate}
 \end{corollary}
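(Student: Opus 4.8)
The plan is to prove the two parts separately, deriving (i) directly from Theorem~\ref{theorem:KurepaInaccessibleWO} and (ii) by a Silver-style branch-counting argument that turns ``too many branches'' into a Cantor subtree.

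For (i), I would apply Theorem~\ref{theorem:KurepaInaccessibleWO} inside $\VV[G,H]$ with the inner model $M=\VV[G]$. The verification of the hypotheses is routine bookkeeping about the two L\'evy collapses: the forcing $\Coll{\kappa}{{<}\theta}$ is ${<}\kappa$-closed and $\theta$-cc, so $\kappa$ stays inaccessible in $\VV[G]$ and $\theta=(\kappa^+)^{\VV[G]}$, while $\Coll{\mu}{{<}\kappa}$ is ${<}\mu$-closed and $\kappa$-cc, so it preserves $\mu$, $\kappa$ and $\theta=\kappa^+$, forces $\kappa=\mu^+$, and yields $2^\mu=\kappa$ and hence $\kappa^{{<}\kappa}=\kappa$ in $\VV[G,H]$. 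Thus $M=\VV[G]$ witnesses that $\kappa$ is inaccessible in an inner model with $(\kappa^+)^M=\kappa^+$, and the theorem produces the desired tree $T$ with $[T]$ not a continuous image of ${}^\kappa\kappa$.

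For (ii), the strategy is the classical fact (going back to Silver) that a ${<}\kappa$-closed forcing cannot add a cofinal branch to a $\kappa$-tree unless the tree splits into a Cantor-style scheme. Concretely, I would isolate the following branch lemma: if $N$ is a model, $\kappa>\omega$ is regular in $N$, $S\in N$ is a tree of height $\kappa$, and $\QQQ\in N$ is ${<}\kappa$-closed, then any $\QQQ$-name forced to be a new cofinal branch can be used, by a binary fusion through ${}^{{<}\omega}2$, to build nodes $\seq{t_\sigma}{\sigma\in{}^{{<}\omega}2}$ sitting at a common increasing sequence of levels $\seq{\beta_n}{n<\omega}$ with $t_{\sigma 0}\perp t_{\sigma 1}$; taking lower bounds at the $\omega$-th stage (using countable closure) gives $2^\omega$ distinct nodes $\tau_x\in S(\lambda)$ at the level $\lambda=\sup_n\beta_n$, each forced to lie on the branch and hence, because $S\in N$, genuinely in $S$. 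Since the fan-out of $\{\tau_x\}$ at $\beta_n$ is only $2^n$, this is exactly a Cantor subtree. The one point where this departs from Silver's original use is that here the levels of $S$ may be uncountable, so the $2^\omega$ new nodes are not a contradiction but precisely the object we are after.

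The main obstacle is that a $\kappa$-Kurepa tree $S$ in $\VV[G,H]$ has height $\kappa$ and is therefore \emph{not} captured by any initial segment of the collapse $\Coll{\mu}{{<}\kappa}$ that makes $\kappa=\mu^+$, so the branch lemma cannot be applied naively over $\VV[G]$. The key idea to get around this is to re-factor the two-step iteration so that $S$ lands in the ground model of a ${<}\kappa$-closed tail: since the whole forcing $\Coll{\kappa}{{<}\theta}\ast\Coll{\mu}{{<}\kappa}$ is $\theta$-cc of size $\theta$ and $S$ is coded by a subset of $\kappa<\theta$, there is $\alpha<\theta$ with $S\in N:=\VV[G\restriction\alpha][H]$, and by the product lemma $\VV[G,H]=N[G^\alpha]$ where $G^\alpha$ is $\Coll{\kappa}{[\alpha,\theta)}$-generic over $N$ and remains ${<}\kappa$-closed over $N$ (this last point follows from the regularity of $\kappa$ in $N$ and is the step that needs care). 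Now $\theta$ is inaccessible in $N$ while $G^\alpha$ collapses $\theta$ to $\kappa^+$, so $|[S]|^N\le(2^\kappa)^N<\theta$. If $S$ had no Cantor subtree, the branch lemma would show that $G^\alpha$ adds no new cofinal branch, whence $|[S]|^{\VV[G,H]}=|[S]|^N\le\kappa$, contradicting that $S$ is Kurepa. Hence every $\kappa$-Kurepa tree in $\VV[G,H]$ contains a Cantor subtree, which is (ii).
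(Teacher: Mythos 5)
Your overall strategy is the same as the paper's: part (i) is verbatim the paper's application of Theorem \ref{theorem:KurepaInaccessibleWO} to $M=\VV[G]$, and part (ii) uses the same three ingredients the paper does, namely capturing the Kurepa tree in an intermediate model $N=\VV[G\restriction\alpha][H]$ via chain-condition arguments, counting branches ($\theta$ inaccessible in $N$ versus $\theta=\kappa^+$ in $\VV[G,H]$) to see that the tail collapse must add a cofinal branch, and a Silver-style binary fusion to turn the new branch into a Cantor subtree. However, there is one genuinely false step: your claim that the tail $\Coll{\kappa}{[\alpha,\theta)}^\VV$ ``remains ${<}\kappa$-closed over $N$'' and that this ``follows from the regularity of $\kappa$ in $N$.'' The conditions of this poset are, by definition, elements of $\VV$. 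In $N$, where $\kappa=\mu^+$, ${<}\kappa$-closure means closure under decreasing $\mu$-sequences, and this fails: take any new function $\map{f}{\mu}{\kappa}$ added by $H$ and turn it into a decreasing sequence $\seq{p_\xi}{\xi<\mu}$ of tail conditions coding $f\restriction\xi$. Each $p_\xi$ lies in $\VV$ (because $\Coll{\mu}{{<}\kappa}$ is ${<}\mu$-closed and $\Coll{\kappa}{{<}\alpha}$ is ${<}\kappa$-closed, so no new ${<}\mu$-sequences of ordinals appear), but a lower bound in $\Coll{\kappa}{[\alpha,\theta)}^\VV$ would be a single element of $\VV$ extending all of them, putting $f$ into $\VV$ -- a contradiction. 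Regularity of $\kappa$ in $N$ only gives ${<}\kappa$-closure of $\Coll{\kappa}{[\alpha,\theta)}^N$, which is a different poset and does not realize $\VV[G,H]$ as a generic extension of $N$.

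Fortunately this over-claim is more than your argument needs, and the repair is exactly what the paper does. Your fusion only takes lower bounds at stage $\omega$, so countable closure of the tail in $N$ suffices, and that does hold: since $\Coll{\mu}{{<}\kappa}$ is ${<}\mu$-closed in $\VV[G\restriction\alpha]$, every ${<}\mu$-sequence of tail conditions lying in $N$ already lies in $\VV[G\restriction\alpha]$, where the tail is ${<}\kappa$-closed; hence the tail is ${<}\mu$-closed in $N$, and $\mu$ is uncountable. The paper formulates the argument with precisely this hypothesis (``$\Coll{\kappa}{[\vartheta,\theta)}^\VV$ is ${<}\mu$-closed in $\VV[G_\vartheta,H]$ and $\mu$ is an uncountable cardinal''), working with a nice name $\dot{T}\in\VV[G_\vartheta]$ rather than the tree itself, which is an inessential difference from your capture of $S$ in $N$. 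So: restate your branch lemma with ``$\QQQ$ is countably closed (or ${<}\mu$-closed, $\mu$ uncountable) in $N$'' in place of ``${<}\kappa$-closed,'' verify that weaker property as above, and your proof is correct and matches the paper's.
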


 \begin{proof}
  First, since $(\kappa^+)^{\VV[G,H]}=\theta=(\kappa^+)^{\VV[G]}$ and $\kappa$ is inaccessible in $\VV[G]$, we can apply Theorem \ref{theorem:KurepaInaccessibleWO} to conclude that, in $\VV[G,H]$, there is a $\kappa$-Kurepa subtree $T$ of ${}^{{<}\kappa}\kappa$ such that the set $[T]$ is not a continuous image of ${}^\kappa\kappa$. 
  Next, fix a $\Coll{\mu}{{<}\kappa}$-nice name $\dot{T}\in\VV[G]$ for a $\kappa$-Kurepa tree with underlying set $\kappa$. Since $\Coll{\mu}{{<}\kappa}$ satisfies the $\kappa$-chain condition in $\VV[G]$ and $\Coll{\kappa}{{<}\theta}$ satisfies the $\theta$-chain condition in $\VV$, we can find $\kappa<\vartheta<\theta$ with the property that $\dot{T}\in\VV[G_\vartheta]$, where $G_\vartheta=G\cap\Coll{\kappa}{{<}\vartheta}$. Then $\dot{T}^H\in\VV[G_\vartheta,H]$, $\theta$ is inaccessible in $\VV[G_\vartheta,H]$ and hence forcing with $\Coll{\kappa}{[\vartheta,\theta)}^\VV$ over $\VV[G_\vartheta,H]$ adds a new cofinal branch to $\dot{T}^H$. Since the partial order $\Coll{\kappa}{[\vartheta,\theta)}^\VV$ is ${<}\mu$-closed in $\VV[G_\vartheta,H]$ and $\mu$ is an uncountable cardinal in $\VV[G_\vartheta,H]$, standard arguments show that $\dot{T}^H$ contains a Cantor subtree in $\VV[G_\vartheta,H]$ and this subtree is still a subtree in $\VV[G,H]$.  
 \end{proof}


\section{Superthin Kurepa trees}

The concepts introduced in the next definition will play a central role in our proofs of Theorems  \ref{theorem:KurepaImageInL},  \ref{theorem:CollapseImageNONImage} and  \ref{theorem:StatSlimImage}.

\begin{definition}
 Let $\kappa$ be an infinite regular cardinal and let $T$ be a subtree of ${}^{{<}\kappa}\kappa$. 
 \begin{enumerate}
  \item The tree $T$ is \emph{pruned} if for every $s\in T$, there is $t\in T$ with $s\subsetneq t$. 

  \item The \emph{boundary} $\partial T$ of $T$ is defined as the set of minimal elements of ${}^{<\kappa}\kappa\setminus T$, i.e. $$\partial T ~ = ~ \Set{t\in{}^{{<}\kappa}\kappa\setminus T}{\forall\alpha\in\dom{t} ~ t\restriction\alpha\in T}.$$


  \item The tree $T$ is \emph{superthin} if $\betrag{(T\cup\partial T)\cap{}^\alpha\kappa}<\kappa$ holds for all $\alpha\in\Lim\cap\kappa$.  
 \end{enumerate}
\end{definition}

 
Note that a subtree $T$ of ${}^{{<}\kappa}\kappa$ of height $\kappa$ is ${<}\kappa$-closed if and only if 
$\partial T\cap{}^\alpha\kappa=\emptyset$ holds for all $\alpha\in\Lim\cap\kappa$. 
The following observation shows that the fact that all non-empty closed subsets of ${}^\omega\omega$ are retracts of ${}^\omega\omega$ can be generalized to a certain class of closed subsets of higher Baire spaces.

\begin{proposition}\label{proposition:PrunedClosedImage}
  If $\kappa$ is an infinite regular cardinal and $T$ is a ${<}\kappa$-closed pruned subtree of ${}^{{<}\kappa}\kappa$, then $N_t\cap[T]\neq\emptyset$ for every $t\in T$ and the closed set $[T]$ is a retract of ${}^\kappa\kappa$. 
\end{proposition}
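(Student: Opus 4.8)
The plan is to prove both assertions by a single transfinite recursion that, on input $x\in{}^\kappa\kappa$, assembles a cofinal branch through $T$ by \emph{following} $x$ as long as this keeps us inside $T$ and \emph{correcting} it canonically otherwise. Note first that $T$ must be non-empty (otherwise $[T]$ could not be a retract of the non-empty space ${}^\kappa\kappa$), so $\emptyset\in T$. I would begin by isolating the extension principle that yields the first assertion: given $t\in T$, build a $\lhd$-increasing sequence $\seq{s_\xi\in T}{\xi<\kappa}$ with $s_0=t$, choosing $s_{\xi+1}$ to be any proper extension of $s_\xi$ in $T$ (available since $T$ is pruned) and putting $s_\lambda=\bigcup_{\xi<\lambda}s_\xi$ at limits $\lambda<\kappa$ (an element of $T$ by $<\kappa$-closedness, being the union of a $\lhd$-increasing sequence of length $\lambda$). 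The domains $\dom{s_\xi}$ then increase strictly, so $x=\bigcup_{\xi<\kappa}s_\xi\in{}^\kappa\kappa$; and since each $x\restriction\alpha$ equals $s_\xi\restriction\alpha\in T$ for large enough $\xi$, we get $x\in N_t\cap[T]$.

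For the retraction, I would define, for each $x\in{}^\kappa\kappa$, a sequence $\seq{t_\alpha\in T\cap{}^\alpha\kappa}{\alpha<\kappa}$ by recursion on $\alpha$: set $t_0=\emptyset$; at limits $\lambda$ put $t_\lambda=\bigcup_{\alpha<\lambda}t_\alpha$; and at successor steps set $t_{\alpha+1}=t_\alpha{}^\frown\langle x(\alpha)\rangle$ if this node lies in $T$, and otherwise $t_{\alpha+1}=t_\alpha{}^\frown\langle\eta\rangle$ where $\eta$ is least with $t_\alpha{}^\frown\langle\eta\rangle\in T$. Pruning guarantees such an $\eta$ exists (a proper extension of $t_\alpha$ in $T$ restricts to an immediate successor), and $<\kappa$-closedness guarantees $t_\lambda\in T$ at limits, so the recursion never leaves $T$ and runs through all $\alpha<\kappa$. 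I then set $r(x)=\bigcup_{\alpha<\kappa}t_\alpha$.

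Finally I would verify the three required properties. Since $\dom{t_\alpha}=\alpha$ and the $t_\alpha$ are $\lhd$-increasing, $r(x)\in{}^\kappa\kappa$ with $r(x)\restriction\alpha=t_\alpha\in T$ for every $\alpha<\kappa$, whence $r(x)\in[T]$. A straightforward induction shows that $t_\alpha$ depends only on $x\restriction\alpha$ (the successor clause reads off $x(\alpha)$, the limit clause merely collects earlier values), so $x\restriction\alpha=y\restriction\alpha$ implies $r(x)\restriction\alpha=r(y)\restriction\alpha$; this monotone dependence is exactly continuity of $r$ for the bounded topology. If $x\in[T]$, then $x\restriction(\alpha+1)\in T$ at every successor stage, so the first clause always applies and induction gives $t_\alpha=x\restriction\alpha$, hence $r(x)=x$; thus $r\restriction[T]=\id_{[T]}$, which in particular forces $r$ to be a surjection onto $[T]$.

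The one genuinely load-bearing point, and the only place a hypothesis beyond pruning is used, is the limit stage: I must know the branch built so far stays inside $T$, and this is precisely the $<\kappa$-closedness of $T$ (equivalently $\partial T\cap{}^\lambda\kappa=\emptyset$ for $\lambda\in\Lim\cap\kappa$). For $\kappa=\omega$ there are no such stages and the construction degenerates to the classical retraction onto a pruned tree; for larger $\kappa$ this closure is exactly what keeps the follow-or-correct process from falling off the tree at limits, and its failure is what obstructs retracting general closed sets.
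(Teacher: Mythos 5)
Your proof is correct, and its first half (extending any $t\in T$ to a cofinal branch by pruning at successor steps and ${<}\kappa$-closedness at limits, with regularity of $\kappa$ keeping the domains below $\kappa$) is exactly the paper's argument that $N_t\cap[T]\neq\emptyset$. The retraction is where you genuinely diverge. The paper fixes, for each $t\in T$, a canonical branch $x_t\in N_t\cap[T]$ coming from that first construction, observes that every $y\notin[T]$ has a \emph{last} initial segment $y\restriction\alpha_y$ lying in $T$ (the exit point is a successor ordinal precisely because ${<}\kappa$-closedness forbids $\partial T$ from meeting limit levels), and defines $r(y)=x_{y\restriction\alpha_y}$ and $r\restriction[T]=\id_{[T]}$, verifying continuity by a two-case analysis; in particular the paper's $r$ is constant on each cone $N_u$ with $u\in\partial T$, which makes the structure of the retraction (collapse each off-tree cone to one fixed branch) transparent. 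Your $r$ instead runs a single follow-or-correct recursion that keeps consulting $x$ even after the first deviation, so the two maps are in general different functions; what your version buys is the uniformity that level $\alpha$ of the output depends only on $x\restriction\alpha$, from which continuity (indeed nonexpansiveness) and $r\restriction[T]=\id_{[T]}$ both fall out of one induction with no case distinctions. The hypotheses are used in the same places in both arguments, and your closing diagnosis of where ${<}\kappa$-closedness is load-bearing matches the paper's remark relating closedness to $\partial T$. One small blemish: your opening justification that $T\neq\emptyset$ is circular, since $[T]$ being a retract is the conclusion, not a hypothesis; the clean fix is to treat the empty tree as an excluded degeneracy (or to invoke the length-zero instance of ${<}\kappa$-closedness, which already forces the empty sequence into $T$), after which $\emptyset\in T$ follows because subtrees are closed under restriction --- a point the paper's own proof also passes over silently.
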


\begin{proof}
 Given $t\in T$, our assumptions on $T$ allow us to do an easy inductive construction that produces a sequence $\seq{t_\alpha}{\alpha<\kappa}$ of elements of $T$ such that $t_0=t$, $\dom{t_\alpha}=\dom{t}+\alpha$ and $t_\alpha\subseteq t_\beta$ for all $\alpha\leq\beta<\kappa$. In this situation, we have $x_t=\bigcup\Set{t_\alpha}{\alpha<\kappa}\in N_t\cap [T]$. 

 Now, fix $y\in{}^\kappa\kappa$. Then there is a  unique $\beta<\kappa$ with $y\restriction\beta\in\partial T$ and, since $T$ is ${<}\kappa$-closed, we know that $\beta$ is not a limit ordinal. Hence there is a unique $\alpha_y<\kappa$ with $y\restriction\alpha_y\in T$ and $y\restriction(\alpha_y+1)\notin T$. 

 Let $\map{r}{{}^\kappa\kappa}{[T]}$ denote the unique function with $r\restriction[T]=\id_{[T]}$ and $r(y)=x_{y\restriction\alpha_y}$ for all $y\in{}^\kappa\kappa\setminus[T]$. 

 \begin{claim*}
  The function $r$ is continuous. 
 \end{claim*}

 \begin{proof}[Proof of the Claim]
  First, fix $x\in[T]$, $\alpha<\kappa$ and $y\in N_{x\restriction\alpha}$. If $y\in[T]$, then we have $r(y)=y\in N_{x\restriction\alpha}=N_{r(x)\restriction\alpha}$. In the other case, if $y\notin[T]$, then $x\restriction\alpha\in T$ implies that $\alpha_y\geq\alpha$ and hence $r(y)=x_{y\restriction\alpha_y}\in N_{x\restriction\alpha}=N_{r(x)\restriction\alpha}$. 

  Now, fix $x\in{}^\kappa\kappa\setminus[T]$ and $\alpha<\kappa$. Pick $\alpha\leq\beta<\kappa$ with $x\restriction\beta\notin T$ and $y\in N_{x\restriction\beta}$. Then we have $y\notin[T]$, $\alpha_x=\alpha_y\leq\beta$, $x\restriction\alpha_x=y\restriction\alpha_y$ and  $r(y)=x_{y\restriction\alpha_y}=x_{x\restriction\alpha_x}=r(x)\in N_{r(x)\restriction\alpha}$. 
 \end{proof}

 Since $r\restriction[T]=\id_{[T]}$, the above claim completes the proof of the proposition. 
\end{proof}

\begin{lemma}\label{lemma:SuperthinContImage}
 Let $\kappa$ be an uncountable regular cardinal. If there is a superthin $\kappa$-Kurepa subtree $S$ of ${}^{{<}\kappa}\kappa$, then there is a $\kappa$-Kurepa subtree $T$ of ${}^{{<}\kappa}\kappa$ with the property that $[T]$ is a retract of ${}^\kappa\kappa$. 
 Moreover, if $S$ is pruned, then $T$ can be taken to contain $S$ as a subtree. 
\end{lemma}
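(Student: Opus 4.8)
The plan is to repair $S$ into a ${<}\kappa$-closed and pruned $\kappa$-Kurepa subtree $T$ of ${}^{{<}\kappa}\kappa$ with $S\subseteq T$, and then to read off the conclusion from Proposition \ref{proposition:PrunedClosedImage}. The two hypotheses of that proposition are exactly the two features $S$ may lack: $S$ need not be ${<}\kappa$-closed, since its boundary $\partial S$ can meet limit levels, and $S$ need not be pruned, since it may have maximal nodes. Superthinness is the assumption that lets me repair the first defect while keeping the levels narrow, as it bounds $\betrag{(S\cup\partial S)\cap{}^\alpha\kappa}$ below $\kappa$ at every $\alpha\in\Lim\cap\kappa$.

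First I would close $S$ under limits by setting $T_0=S\cup(\partial S\cap\bigcup\Set{{}^\alpha\kappa}{\alpha\in\Lim\cap\kappa})$. The key point is that this single step already produces a ${<}\kappa$-closed tree: every boundary node $b\in\partial S$ is $\subseteq$-maximal in $S\cup\partial S$ (a proper extension $c\supsetneq b$ with $c\in S\cup\partial S$ would have $b$ as a proper initial segment with $b\notin S$, impossible for $c\in S$ by downward closure and impossible for $c\in\partial S$ by definition). Hence a $\lhd$-increasing chain in $T_0$ that reaches a limit level $\alpha$ cannot pass through any boundary node, so all its initial segments lie in $S$, its union lies in $S\cup\partial S$, and, being at a limit level, in $T_0$. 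Equivalently, $\partial T_0\cap{}^\alpha\kappa=\emptyset$ for every $\alpha\in\Lim\cap\kappa$, which is exactly the stated characterization of ${<}\kappa$-closedness.

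Next I would make the tree pruned by attaching to each maximal node of $T_0$ its constant-zero continuation: let $T=T_0\cup\Set{\ell\frown z}{\ell\text{ maximal in }T_0,\ z\in{}^{{<}\kappa}\{0\}\setminus\{\emptyset\}}$. Distinct maximal nodes are pairwise incomparable, so these tails are disjoint and $T$ is again a subtree; every node of $T$ now has a proper extension, so $T$ is pruned; and ${<}\kappa$-closedness survives because once an increasing chain leaves $T_0$ it is committed to an all-zero tail, and such tails are closed under unions. Since the construction only adds nodes, $S\subseteq T_0\subseteq T$, which also gives the ``moreover'' clause; note that when $S$ is pruned the only maximal nodes of $T_0$ are its limit-level boundary nodes.

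It remains to verify that $T$ is a $\kappa$-Kurepa subtree and then to quote Proposition \ref{proposition:PrunedClosedImage}. Height $\kappa$ is immediate from $S\subseteq T\subseteq{}^{{<}\kappa}\kappa$, and $[S]\subseteq[T]$ together with $\betrag{[S]}\geq\kappa^+$ provides at least $\kappa^+$ cofinal branches. The only genuine computation is narrowness of the levels: at each $\alpha<\kappa$ the tree $T$ adds to $T_0\cap{}^\alpha\kappa$ one node per maximal node of $T_0$ below level $\alpha$, so $\betrag{T\cap{}^\alpha\kappa}\leq\betrag{T_0\cap{}^\alpha\kappa}+\betrag{T_0\cap{}^{{<}\alpha}\kappa}$; superthinness keeps every limit level of $T_0$ below $\kappa$ (successor levels agree with $S$), and regularity of $\kappa$ keeps the union over the $\betrag{\alpha}$ earlier levels below $\kappa$. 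The step I expect to need the most care is precisely this interplay between the closure step and the level bounds: one must check that a single round of adding limits already yields ${<}\kappa$-closedness and that grafting the zero-tails neither reopens the boundary at limit levels nor pushes any level up to size $\kappa$, both of which rest on the $\subseteq$-maximality of boundary (respectively, maximal) nodes and on the regularity of $\kappa$.
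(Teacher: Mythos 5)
Your proof is correct and follows essentially the same route as the paper: complete the superthin tree to a ${<}\kappa$-closed pruned $\kappa$-Kurepa subtree by adjoining limit-level boundary nodes and all-zero tails, with superthinness and regularity keeping the levels small, and then invoke Proposition \ref{proposition:PrunedClosedImage}. The only (harmless) difference is that the paper first prunes $S$ down to the nodes lying on cofinal branches and grafts zero-tails only onto limit-level boundary nodes, whereas you keep all of $S$ and also graft zero-tails onto its dead ends --- which in fact yields the ``moreover'' clause without any pruning hypothesis on $S$.
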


\begin{proof}
 Let $S_0$ be a superthin $\kappa$-Kurepa subtree of ${}^{{<}\kappa}\kappa$. If $S_0$ is pruned, then we set $S_1=S_0$. Otherwise, we define  $$S_1 ~ = ~ \Set{t\in S_0}{\exists x\in[T] ~ t\subseteq x}.$$ 
 Then it is easy to see that $S_1$ is a pruned superthin $\kappa$-Kurepa subtree of ${}^{{<}\kappa}\kappa$. Finally, define  $T$ to consist of all elements of $S_1$ together with all elements $t$ of ${}^{{<}\kappa}\kappa$ with the property that there exist $s\in\partial S_1$ such that $s\subseteq t$, $\dom{s}\in\Lim$ and $t(\alpha)=0$ for all $\alpha\in\dom{t}\setminus\dom{s}$. 

 \begin{claim*}
  $T$ is a ${<}\kappa$-closed pruned $\kappa$-Kurepa subtree of ${}^{{<}\kappa}\kappa$. 
 \end{claim*}

 \begin{proof}[Proof of the Claim]
  Note that for every $t\in T\setminus S_1$, there is a unique limit ordinal $\alpha\leq\dom{t}$ with $t\restriction\alpha\in\partial S_1$. Since $S_1$ is superthin, this shows that $\betrag{(T\setminus S_1)\cap{}^\alpha\kappa}<\kappa$ holds for all $\alpha<\kappa$. Moreover, since $S_1$ is a $\kappa$-Kurepa subtree of ${}^{{<}\kappa}\kappa$, this directly implies that $T$ is also a $\kappa$-Kurepa subtree of ${}^{{<}\kappa}\kappa$. Next, note that for every $t\in T\setminus S_1$, we have $t\subsetneq t\cup\{\langle\dom{t},0\rangle\}\in T$. Since $S_1$ is pruned, this shows that $T$ is also pruned. Finally, assume that there is $t\in\partial T$ with $\dom{t}\in\Lim$. Then $t\notin S_1$ and, by the definition of $T$, we have $t\restriction\alpha\in S_1$ for all $\alpha<\dom{t}$. But then $t\in\partial S_1\subseteq T$, a contradiction. 
 \end{proof}

 By the above claim, Proposition \ref{proposition:PrunedClosedImage} directly shows that $[T]$ is a retract of ${}^\kappa\kappa$. 
\end{proof}

The properties of tree introduced in the next definition are studied in depth by Bernhard K\"onig in \cite{MR2013395}.

\begin{definition}
 \begin{enumerate}
  \item A tree of height $\lambda$ is \emph{trivially coherent} if it isomorphic to a subtree of ${}^{{<}\lambda}2$ consisting of sequences $t$ with the property that $t^{{-}1}\{0\}$ is a finite set. 

  \item A tree $T$ is \emph{locally coherent} if $T_{{<}\alpha}$ is trivially coherent for every $\alpha<\height{T}$. 
 \end{enumerate}
\end{definition}

\begin{proposition}\label{proposition:LocallyCoherentSuperthin}
 Let $\kappa$ be a regular cardinal with $\lambda^\omega<\kappa$ for all $\lambda<\kappa$. Then every  locally coherent $\kappa$-Kurepa subtree of ${}^{{<}\kappa}\kappa$ is superthin. 
\end{proposition}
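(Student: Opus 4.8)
The plan is to fix a limit ordinal $\alpha\in\Lim\cap\kappa$ and to bound $\betrag{(T\cup\partial T)\cap{}^\alpha\kappa}$ directly. First I would record that, since $\alpha$ is a limit, the definition of the boundary gives
\[
 (T\cup\partial T)\cap{}^\alpha\kappa ~ = ~ \Set{t\in{}^\alpha\kappa}{\forall\beta<\alpha ~ t\restriction\beta\in T},
\]
because a sequence $t$ of limit length $\alpha$ lies in $T\cup\partial T$ precisely when every proper initial segment of $t$ lies in $T$. In tree-theoretic language, the map $t\mapsto\Set{t\restriction\beta}{\beta<\alpha}$ is a bijection between this set and the set of cofinal branches through the height-$\alpha$ tree $T\cap{}^{{<}\alpha}\kappa$.

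Next I would invoke local coherence. As $\alpha<\height{T}=\kappa$, the tree $T\cap{}^{{<}\alpha}\kappa$ is trivially coherent, so there is a tree isomorphism onto a subtree $U$ of ${}^{{<}\alpha}2$ all of whose nodes $s$ satisfy that $s^{{-}1}\{0\}$ is finite. Since a tree isomorphism preserves the tree order, downward closure and the order type of chains, it induces a bijection between the cofinal branches of $T\cap{}^{{<}\alpha}\kappa$ and those of $U$; hence it suffices to bound the number of cofinal branches of $U$.

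The key counting step is the observation that every cofinal branch $b\colon\alpha\to 2$ of $U$ has countable $0$-support. Indeed, $b^{{-}1}\{0\}\cap\beta=(b\restriction\beta)^{{-}1}\{0\}$ is finite for every $\beta<\alpha$, and a subset of $\alpha$ all of whose intersections with ordinals below $\alpha$ are finite is either finite or cofinal of order type $\omega$; in either case it is countable. Because each node of $U$ equals $1$ off a finite set, such a branch $b$ is completely determined by the set $b^{{-}1}\{0\}$, so $b\mapsto b^{{-}1}\{0\}$ injects the cofinal branches of $U$ into $[\alpha]^{\leq\omega}$. Combining the bijections above, this yields
\[
 \betrag{(T\cup\partial T)\cap{}^\alpha\kappa} ~ \leq ~ \betrag{[\alpha]^{\leq\omega}} ~ = ~ \betrag{\alpha}^\omega ~ < ~ \kappa,
\]
where the last inequality is the hypothesis $\lambda^\omega<\kappa$ applied to $\lambda=\betrag{\alpha}<\kappa$ (note $\betrag{\alpha}$ is infinite, as $\alpha$ is a limit ordinal). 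Since $\alpha$ was an arbitrary element of $\Lim\cap\kappa$, this shows that $T$ is superthin.

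I expect the main obstacle to be the verification that cofinal branches of a trivially coherent tree have countable support. Concretely, this splits into the case $\cf(\alpha)>\omega$, where the support is forced to be finite and $\betrag{\alpha}$ already bounds the count, and the case $\cf(\alpha)=\omega$, where cofinal $\omega$-sequences of $0$'s genuinely occur and the assumption $\betrag{\alpha}^\omega<\kappa$ is essential. A secondary point requiring care is the clean identification of $(T\cup\partial T)\cap{}^\alpha\kappa$ with the set of cofinal branches of the truncated tree and the transfer of this branch count across the coherence isomorphism.
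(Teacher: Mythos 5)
Your proof is correct, but it takes a genuinely different route from the paper's. The paper argues by cases on $\cof{\alpha}$, with two separate counting mechanisms: for $\cof{\alpha}=\omega$ it does not use local coherence at all --- it fixes a cofinal sequence $\seq{\alpha_n}{n<\omega}$ in $\alpha$, notes that each level $T\cap{}^{\alpha_n}\kappa$ has some size $\lambda_n<\kappa$ by the Kurepa property, and bounds the number of branches by $\lambda^\omega<\kappa$ with $\lambda=\sup_{n<\omega}\lambda_n$, since a branch is determined by its restrictions to the levels $\alpha_n$; for $\cof{\alpha}>\omega$ it uses coherence but no cardinal arithmetic --- along any branch the finite $0$-supports of the images under the tree monomorphism must stabilize below $\alpha$ (a nondecreasing $\alpha$-sequence of natural numbers is eventually constant when $\cof{\alpha}>\omega$), so each branch is determined by a single node, injecting the branch set into $T\cap{}^{{<}\alpha}\kappa$, which has size ${<}\kappa$. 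Your argument instead treats all limit levels uniformly: coherence forces the $0$-support of every cofinal branch of $U$ to be finite or cofinal of order type $\omega$, hence countable, so branches inject into $[\alpha]^{\leq\omega}$, of size $\betrag{\alpha}^\omega<\kappa$. Both arguments are complete; yours is shorter and avoids the stabilization step (note, though, that the clause ``$b$ is determined by $b^{-1}\{0\}$'' is true of any $\{0,1\}$-valued function --- the actual content of coherence here is the countability of that set), while the paper's case split carries extra information about where each hypothesis is used: local coherence matters only at levels of uncountable cofinality, since levels of countable cofinality are automatically small in any $\kappa$-Kurepa tree under the stated cardinal arithmetic, and conversely the hypothesis $\lambda^\omega<\kappa$ enters only at levels of countable cofinality.
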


\begin{proof}
 Let $T$ be a locally coherent $\kappa$-Kurepa subtree of ${}^{{<}\kappa}\kappa$ and let $\alpha\in\Lim\cap\kappa$. 

 First, assume that $\cof{\alpha}=\omega$. Pick a  cofinal sequence $\seq{\alpha_n}{n<\omega}$ in $\alpha$. Given $n<\omega$, the fact that $T$ is a $\kappa$-Kurepa tree implies that $\lambda_n=\betrag{T\cap{}^{\alpha_n}\kappa}<\kappa$. Moreover, since $\kappa$ is regular, we know that $\lambda=\sup_{n<\omega}\lambda_n<\kappa$. But then the set $[T\cap{}^\alpha\kappa]$ has cardinality at most $\lambda^\omega<\kappa$ and hence we can conclude that $\betrag{(T\cup\partial T)\cap{}^\alpha\kappa}<\kappa$. 

 Next, assume that $\cof{\alpha}>\omega$. Fix a tree monomorphism $\map{\pi}{T\cap{}^{{<}\alpha}\kappa}{{}^{{<}\alpha}2}$ with the property that $\pi(t)^{{-}1}\{0\}$ is finite for every $t\in T\cap{}^{{<}\alpha}\kappa$. Given $u\in[T\cap{}^\alpha\kappa]$, we can now find minimal $\alpha_u<\alpha$ and $N_u<\omega$ with the property that $\betrag{\pi(u\restriction\bar{\alpha})^{{-}1}\{0\}}=N_u$ holds for all $\alpha_u\leq\bar{\alpha}<\alpha$. In this situation, two elements $u_0$ and $u_1$ of $[T\cap{}^\alpha\kappa]$ are identical if and only if 
$u_0\restriction\alpha_{u_0}=u_1\restriction\alpha_{u_1}$ 
holds. In particular, we also have  $\betrag{(T\cup\partial T)\cap{}^\alpha\kappa}\leq\betrag{[T\cap{}^\alpha\kappa]}<\kappa$ in this case. 
\end{proof}

 The following unpublished result of Donder shows that, in the constructible universe, locally coherent Kurepa trees exist at all successor cardinals. This result is  proven by showing that the initial segments of the canonical Kurepa trees constructed from the canonical morasses at successor cardinals in $\LL$ (see {\cite[Chapter VIII, Section 2]{MR750828}} and {\cite[Section 2]{MR701126}}) satisfy the criterion for trivial  coherency given by {\cite[Lemma 2.17]{MR2013395}}.

\begin{theorem}[Donder]\label{theorem:DonderKurepa}
 Assume that $\VV=\LL$ holds. If $\kappa$ is the successor of an infinite cardinal, then there is a locally coherent $\kappa$-Kurepa tree. 
\end{theorem}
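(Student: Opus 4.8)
The plan is to work in $\LL$ and to take, as the witnessing tree, the \emph{canonical Kurepa tree} attached to the canonical gap-$1$ morass at the successor cardinal $\kappa=\lambda^+$, verifying local coherence afterwards by an application of K\"onig's criterion. First I would recall the standard construction from {\cite[Chapter VIII, Section 2]{MR750828}} and {\cite[Section 2]{MR701126}}: using $\square_\lambda$ and the condensation properties of the $\LL$-hierarchy, one builds a canonical $(\kappa,1)$-morass $\mathcal{M}$ and reads off from it a tree $T$ whose nodes of height $\beta$ are (copies of) the morass points at stage $\beta$, whose order is induced by the morass maps, and whose cofinal branches are indexed by the $\kappa^+$ points sitting at the top of $\mathcal{M}$. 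That $T$ is a $\kappa$-Kurepa tree --- it has height $\kappa$, all levels of size less than $\kappa$, and at least $\kappa^+$ cofinal branches --- is the classical output of this construction, so I would simply quote it. Since local coherence is invariant under tree isomorphism, the entire remaining content is the assertion that the initial segment $T_{{<}\alpha}$ is trivially coherent for every $\alpha<\kappa$.

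For a fixed $\alpha<\kappa$, I would invoke {\cite[Lemma 2.17]{MR2013395}}, which provides a combinatorial criterion for trivial coherence; verifying it for $T_{{<}\alpha}$ amounts to producing a level-preserving, order-preserving embedding of $T_{{<}\alpha}$ into the tree of those functions in ${}^{{<}\alpha}2$ with finite $0$-support. Equivalently, I must assign to each node $t$ a function $e(t)\colon\length{t}{T}\to 2$ with finite $0$-support, coherently in the tree order, so that $s<_T t$ gives $e(s)\subseteq e(t)$ and incomparable nodes receive incomparable codes. Concretely, I would fix a distinguished reference thread through $\mathcal{M}$ and let the finitely many $0$'s of $e(t)$ record exactly the coordinates at which the branch of $t$ has diverged from this reference below the height of $t$. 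The morass maps, which cohere in that any two of them agree below their first point of difference, are what permit this divergence datum to be read off monotonically along the tree order.

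The heart of the matter, and the step I expect to be the main obstacle, is the verification that the $0$-support of $e(t)$ genuinely stays \emph{finite} at every node of height below $\alpha$ --- in particular that it does not accumulate at limit heights $\beta<\alpha$. This is exactly where the fine-structural features of the canonical morass are needed: the linear-limit (neatness) behaviour of $\mathcal{M}$ forces the morass map at a limit stage to be the direct limit of the maps below it, adding no fresh discrepancy, so that along any node the set of divergence coordinates stabilises strictly before every limit height and therefore remains finite. Proving this cleanly means unwinding the morass axioms governing limit stages and appealing to condensation in the underlying $\LL$-hierarchy; it is also precisely here that the hypothesis that $\kappa$ is a successor cardinal is used, since it is at successor cardinals that the canonical morass with this neat limit behaviour is available in $\LL$. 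I would carry the finiteness and stabilisation of the supports as the inductive hypothesis of a single recursion on $\beta\leq\alpha$, so that the successor and limit clauses defining $e$ are handled simultaneously.

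Finally, once the coding $e$ is in place for a given $\alpha$, {\cite[Lemma 2.17]{MR2013395}} yields a tree embedding of $T_{{<}\alpha}$ onto a subtree of ${}^{{<}\alpha}2$ consisting of finite-$0$-support sequences, i.e.\ $T_{{<}\alpha}$ is trivially coherent. As $\alpha<\kappa$ was arbitrary, $T$ is locally coherent, and since $T$ is a $\kappa$-Kurepa tree this completes the proof.
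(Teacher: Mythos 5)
Your proposal follows exactly the route the paper takes: the paper gives no proof of this theorem, attributing it to unpublished work of Donder and noting only that it is proven by showing that the initial segments of the canonical Kurepa trees constructed from the canonical morasses at successor cardinals in $\LL$ (see {\cite[Chapter VIII, Section 2]{MR750828}} and {\cite[Section 2]{MR701126}}) satisfy the trivial-coherence criterion of {\cite[Lemma 2.17]{MR2013395}} --- which is precisely your plan. The fine-structural verification you flag as the main obstacle (stabilisation and finiteness of the divergence supports at limit levels) is exactly the content that the paper also leaves to Donder's unpublished argument, so your sketch matches the paper's treatment in both strategy and level of detail.
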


\begin{proof}[Proof of Theorem \ref{theorem:KurepaImageInL}]
 Assume that $\VV=\LL$ and let $\kappa$ be an uncountable regular cardinal. 
 If there is a cardinal $\mu$ with $\kappa=\mu^+$ and $\cof{\mu}=\omega$, then Theorem \ref{theorem:Main-Negative} directly implies that there is no $\kappa$-Kurepa subtree $T$ of ${}^{{<}\kappa}\kappa$ with the property that $[T]$ is a continuous image of ${}^\kappa\kappa$. 
 Moreover, if $\kappa$ is inaccessible, then ${}^{{<}\kappa}2$ is a $\kappa$-Kurepa subtree of ${}^{{<}\kappa}\kappa$ and the set $[{}^{{<}\kappa}2]={}^\kappa 2$ is a retract of $\kappa$. 
 Finally, if $\kappa$ is the successor of a cardinal of uncountable cofinality, then $\lambda^\omega<\kappa$ holds for all $\lambda<\kappa$ and hence a combination of Lemma \ref{lemma:SuperthinContImage}, Proposition \ref{proposition:LocallyCoherentSuperthin} and Theorem \ref{theorem:DonderKurepa} shows that there is a $\kappa$-Kurepa subtree $T$ of ${}^{{<}\kappa}\kappa$ with the property that $[T]$ is a retract of ${}^\kappa\kappa$. 
 Since the $\GCH$ holds in $\LL$, the above observations provide the desired characterization. 
\end{proof}

Our proofs of Theorem \ref{theorem:CollapseImageNONImage} and Theorem \ref{theorem:StatSlimImage} will heavily rely on the following properties of inner models that were isolated by Hamkins in \cite{MR2063629}.

\begin{definition}
 Let $\mu$ be an infinite cardinal  and let $M$ be an inner model.
 \begin{enumerate}
  \item The pair $(M,\VV)$ has the \emph{$\mu$-cover property} if for every set $x$ with $x\subseteq M$ and $\betrag{x}<\mu$, there is $c\in M$ with $x\subseteq c$ and $\betrag{c}^M<\mu$. 
 
  \item The pair $(M,\VV)$ has the \emph{$\mu$-approximation property} if $x\in M$ holds for every set $x$ with $x\subseteq M$ and $a\cap x\in M$ whenever $a\in M$ with $\betrag{a}^M<\mu$. 
 \end{enumerate}
\end{definition}

\begin{lemma}\label{lemma:SuperthinFromInnerModel}
 Let $M$ be an inner model such that the following statements hold for infinite regular cardinals $\mu<\kappa$ with $\lambda^{{<}\mu}<\kappa$ for all $\lambda<\kappa$: 
 \begin{enumerate}
  \item The pair $(M,\VV)$ satisfies the $\mu$-approximation property. 

  \item $\kappa$ is inaccessible in $M$. 

  \item $(2^\kappa)^M\geq\kappa^+$. 
 \end{enumerate}
 Then $T=({}^{{<}\kappa}2)^M$ is a pruned superthin $\kappa$-Kurepa subtree of ${}^{{<}\kappa}\kappa$. 
\end{lemma}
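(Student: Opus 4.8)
The plan is to verify in turn the four properties packed into the conclusion, keeping the $\mu$-approximation property in reserve for the single nontrivial point. Since membership in $M$ is preserved both under restriction and under appending a value, $T=({}^{{<}\kappa}2)^M$ is at once a subtree of ${}^{{<}\kappa}\kappa$ and pruned: for any $s\in T$ the sequence $s\cup\{\langle\dom{s},0\rangle\}$ again lies in $M$ and properly extends $s$. For the level sizes I would record that $T_\alpha=({}^\alpha 2)^M$ for every $\alpha<\kappa$, so that, since $\kappa$ is inaccessible and hence a strong limit in $M$ and the $\VV$-cardinality of a set lying in $M$ never exceeds its $M$-cardinality, $\betrag{T_\alpha}\leq(2^{\betrag{\alpha}})^M<\kappa$; the constant sequences show every such level is non-empty, so $\height{T}=\kappa$. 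The Kurepa property then follows from $[T]\supseteq({}^\kappa 2)^M$ together with the hypothesis $(2^\kappa)^M\geq\kappa^+$: fixing in $M$ a bijection of $({}^\kappa 2)^M$ with $(2^\kappa)^M$ and restricting it to the ordinal $(\kappa^+)^\VV$ produces an injection of $\kappa^+$ into $[T]$ that still witnesses $\betrag{[T]}\geq\kappa^+$ in $\VV$.

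It remains to prove superthinness, so I would fix a limit ordinal $\alpha<\kappa$ and first identify $(T\cup\partial T)\cap{}^\alpha\kappa$ with the set of functions $t\colon\alpha\to 2$ all of whose proper initial segments lie in $M$ (the values must be in $2$, as each $t\restriction\beta\in T\subseteq{}^{{<}\kappa}2$). The argument then splits on $\cof{\alpha}$. When $\cof{\alpha}<\mu$, I would fix a cofinal sequence $\seq{\alpha_\xi}{\xi<\cof{\alpha}}$ in $\alpha$ and note that any such $t$ is determined by $\seq{t\restriction\alpha_\xi}{\xi<\cof{\alpha}}$; hence the set in question injects into $\prod_{\xi<\cof{\alpha}}T_{\alpha_\xi}$, which has size at most $\lambda^{{<}\mu}<\kappa$, where $\lambda=\sup_{\xi}\betrag{T_{\alpha_\xi}}<\kappa$ by regularity of $\kappa$ and the hypothesis $\lambda^{{<}\mu}<\kappa$. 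This bounds $\betrag{(T\cup\partial T)\cap{}^\alpha\kappa}$ below $\kappa$ outright.

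The remaining case $\cof{\alpha}\geq\mu$ is the heart of the proof, and here I would deploy the $\mu$-approximation property to show that $\partial T\cap{}^\alpha\kappa=\emptyset$, so that $(T\cup\partial T)\cap{}^\alpha\kappa$ collapses to $T_\alpha$ and is again small. Given $t\colon\alpha\to 2$ with every proper initial segment in $M$, I would view $t$ as a subset of $M$ and verify the approximation hypothesis: for any $a\in M$ with $\betrag{a}^M<\mu$, the set $D=\Set{\beta<\alpha}{\exists v ~ \langle\beta,v\rangle\in a}$ belongs to $M$ and has $M$-cardinality, hence $\VV$-cardinality, below $\mu\leq\cof{\alpha}$, so $D$ is bounded by some $\beta<\alpha$; consequently $a\cap t=a\cap(t\restriction\beta)\in M$ because $t\restriction\beta\in M$. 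The approximation property then forces $t\in M$, i.e. $t\in T$, so no witness to $\partial T$ exists at level $\alpha$. I expect this verification — that a small subset of $\alpha$ of cofinality at least $\mu$ is bounded, which is exactly what makes the approximation hypothesis checkable — to be the main obstacle and the only point where assumption (i) and the cofinality assumption are used in tandem. Combining the two cases yields superthinness and completes the proof.
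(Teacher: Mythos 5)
Your proof is correct and follows essentially the same route as the paper's: the pruned Kurepa properties come directly from inaccessibility of $\kappa$ in $M$ and $(2^\kappa)^M\geq\kappa^+$, and superthinness splits on $\cof{\alpha}<\mu$ (branch-counting via $\lambda^{{<}\mu}<\kappa$) versus $\cof{\alpha}\geq\mu$ (the $\mu$-approximation property forces every branch through $T\cap{}^{{<}\alpha}\kappa$ into $M$, so $\partial T\cap{}^\alpha\kappa=\emptyset$). You additionally spell out the verification that $a\cap t=a\cap(t\restriction\beta)\in M$ via boundedness of the relevant coordinates, a detail the paper leaves implicit.
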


\begin{proof}
 First, note that our second and third assumption directly imply that $T$ is a pruned $\kappa$-Kurepa subtree of ${}^{{<}\kappa}\kappa$. 
 Fix $\alpha\in\Lim\cap\kappa$. If $\cof{\alpha}<\mu$, then our assumptions directly imply that $\betrag{\partial T\cap{}^\alpha\kappa}\leq\betrag{[T\cap{}^{{<}\alpha}\kappa]}<\kappa$. 
 In the other case, if $\cof{\alpha}\geq\mu$ and $x\in[T\cap{}^{{<}\alpha}\kappa]$, then $a\cap x\in M$ holds for all $a\in M$ with $\betrag{a}^M<\mu$ and hence the $\mu$-approximation property implies that $x$ is an element of $M$. This argument shows that $[T\cap{}^{{<}\alpha}\kappa]\subseteq T$ holds for all $\alpha\in\Lim\cap\kappa$ with $\cof{\alpha}\geq\mu$. In particular, this shows that $\partial T\cap{}^\alpha\kappa=\emptyset$ holds for all such ordinals $\alpha$. 
 Since $T$ is a $\kappa$-Kurepa subtree of ${}^{{<}\kappa}\kappa$, these computations show that $T$ is superthin.  
\end{proof}

The proof of {\cite[Theorem 7.2]{luecke-schlicht-continuous-images}} directly yields the following result needed for the proof of Theorem \ref{theorem:CollapseImageNONImage}.

\begin{theorem}\label{theorem:CoverPropNotImage}
 Let $M$ be an inner model such that $\RRR\nsubseteq M$ and the pair $(M,\VV)$ has the $\aleph_1$-cover property. If $\kappa$ is an uncountable regular cardinal with $\betrag{(2^\kappa)^M}>\kappa$ and $T=({}^{{<}\kappa}2)^M$, then the set $[T]$ is not a continuous image of ${}^\kappa\kappa$. 
\end{theorem}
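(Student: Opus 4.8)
The plan is to derive a contradiction from a continuous surjection onto $[T]$ by feeding the wide-level structure of Theorem~\ref{theorem:SurjImageWideLevels} into an argument that, via the $\aleph_1$-cover property and the existence of a real outside $M$, reconstructs all reals inside $M$; this mirrors the proof of {\cite[Theorem 7.2]{luecke-schlicht-continuous-images}}. First I would record the relevant features of $T=({}^{{<}\kappa}2)^M$. Since $M$ is an inner model, every $x\in({}^\kappa2)^M$ has all of its initial segments in $M$, so $({}^\kappa2)^M\subseteq[T]$ and hence $\betrag{[T]}\ge\betrag{(2^\kappa)^M}>\kappa$; moreover $T(\lambda)=({}^\lambda2)^M$ for every $\lambda\in\Lim\cap\kappa$. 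Using $\RRR\nsubseteq M$, fix a real $z^*\in{}^\omega2\setminus M$. Now assume, towards a contradiction, that $[T]$ is a continuous image of ${}^\kappa\kappa$ and fix a continuous surjection $\map{c}{{}^\kappa\kappa}{[T]}$. Taking $\mu=\kappa$, the cardinality hypothesis of Theorem~\ref{theorem:SurjImageWideLevels} reads $\kappa^{{<}\kappa}<\betrag{[T]}$, which I would verify from the assumption $\betrag{(2^\kappa)^M}>\kappa$ (this holds in particular in the intended applications, where $\kappa^{{<}\kappa}=\kappa$). Applying the theorem with $A$ an unbounded subset of $\kappa$ chosen inside $M$ (for instance $A=\Lim\cap\kappa$), I obtain a strictly increasing sequence $\seq{\lambda_n}{n<\omega}$ of ordinals in $A$ with least upper bound $\lambda$, so that $\cof{\lambda}=\omega$ and $\lambda\in M$, together with an injection $\map{i}{\prod_{n<\omega}\lambda_n}{T(\lambda)}$ satisfying $x\restriction n=y\restriction n\Leftrightarrow i(x)\restriction\lambda_n=i(y)\restriction\lambda_n$.

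Next I would pass to a binary sub-scheme. After discarding finitely many initial terms so that $\lambda_0\ge2$, restrict $i$ to those elements of $\prod_{n<\omega}\lambda_n$ with values in $\{0,1\}$ and identify them with ${}^\omega2$; this gives an injection $\map{e}{{}^\omega2}{({}^\lambda2)^M}$ for which $e(z)\restriction\lambda_n$ depends only on $z\restriction n$ and for which distinct values of $z\restriction n$ yield distinct values of $e(z)\restriction\lambda_n$. Consequently the set $S=\Set{e(z)\restriction\lambda_n}{z\in{}^\omega2,\ n<\omega}$ is a countable, binary-splitting subtree of $({}^{{<}\lambda}2)^M$ whose cofinal branches are exactly the sequences $e(z)$, and every node of $S$ as well as every branch $e(z)$ is an element of $M$.

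The core of the argument is to exploit the $\aleph_1$-cover property to make the decoding $e(z)\mapsto z$ available inside $M$. Since $S$ is a countable subset of $M$, the cover property provides a set in $M$ that is countable in $M$ and contains $S$ (together with the countable labelling data recording, at each node, which successor carries the digit $0$); using the coherence of $e$, the coincidence $T(\lambda)=({}^\lambda2)^M$, and the fact that the levels $\lambda_n$ lie in $M$, one reconstructs inside $M$ the decoding map on $\ran{e}$. As $e(z)\in M$ for every $z\in{}^\omega2$, this would place every $z\in{}^\omega2$ in $M$, so that $\RRR\subseteq M$, contradicting $z^*\notin M$.

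I expect the main obstacle to be exactly this last transfer. The $\aleph_1$-cover property only guarantees that a countable subset of $M$ is \emph{contained in} a set that is countable in $M$, not that it is itself an element of $M$—the stronger approximation property used in Lemma~\ref{lemma:SuperthinFromInnerModel} is unavailable here, and indeed no cover argument can, by itself, put a single missing real into $M$. The delicate point is therefore to use the rigidity coming from the biconditional in Theorem~\ref{theorem:SurjImageWideLevels}, the coincidence $T(\lambda)=({}^\lambda2)^M$, and a canonical ordering of successors in $M$ to recover the \emph{original} index $z$ rather than merely an $M$-internal relabelling of the branch; securing this reconstruction from a cover alone is the crux, and is precisely what the argument of {\cite[Theorem 7.2]{luecke-schlicht-continuous-images}} supplies.
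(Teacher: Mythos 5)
Your setup is sound as far as it goes: $({}^\kappa 2)^M\subseteq[T]$, every level $T(\lambda)=({}^\lambda 2)^M$ lies inside $M$, and restricting the injection from Theorem~\ref{theorem:SurjImageWideLevels} to binary sequences produces a coherent countable scheme all of whose nodes and cofinal branches are elements of $M$. But the proposal has a genuine gap, and you name it yourself: the entire content of the theorem is the passage from this scheme to a contradiction with $\RRR\nsubseteq M$, and that passage is never carried out. As your last paragraph correctly observes, the $\aleph_1$-cover property only places the countable decoding data inside a set that is countable in $M$; such a cover will in general contain ``junk'' nodes, so an $M$-internal decoding of a branch $e(z)\in M$ is ambiguous and need not return the original $z$ (indeed, no covering argument by itself can put a missing real into $M$). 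Your middle paragraph asserts that ``one reconstructs inside $M$ the decoding map on $\ran{e}$'', and your final paragraph then concedes that you cannot secure this step and that it ``is precisely what the argument of {\cite[Theorem 7.2]{luecke-schlicht-continuous-images}} supplies''. Deferring the crux to that citation is not a proof. For the record, the paper itself offers nothing beyond the remark that the proof of {\cite[Theorem 7.2]{luecke-schlicht-continuous-images}} directly yields the statement, so the missing idea is exactly the content of that external argument, and your proposal does not reconstruct it.

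There is a second, independent problem: applying Theorem~\ref{theorem:SurjImageWideLevels} with $\mu=\kappa$ requires $\kappa^{{<}\kappa}<\betrag{[T]}$, whereas the hypotheses of the theorem in question only yield $\betrag{[T]}\geq\betrag{(2^\kappa)^M}>\kappa$. These bounds differ whenever $\kappa^{{<}\kappa}>\kappa$ (for instance $\kappa=\omega_1$ in a model where $\CH$ fails badly while $\betrag{(2^{\omega_1})^M}=\aleph_2$), and nothing in the hypotheses lets you promote $\betrag{[T]}>\kappa$ to $\betrag{[T]}>\kappa^{{<}\kappa}$. Your parenthetical remark that $\kappa^{{<}\kappa}=\kappa$ holds ``in the intended applications'' is true --- in the proof of Theorem~\ref{theorem:CollapseImageNONImage} one does have $\kappa^{{<}\kappa}=\kappa$ in the relevant extension --- but with that extra assumption you are proving a weakened form of the statement, not the theorem as stated. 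So even the part of the argument you do carry out is only valid under an additional cardinal-arithmetic hypothesis.
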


\begin{proof}[Proof of Theorem \ref{theorem:CollapseImageNONImage}]
 Let $\kappa$ be an inaccessible cardinal, let $\mu<\kappa$ be an uncountable regular cardinal, let $x$ be $\Add{\omega}{1}$-generic over $\VV$ and let $G$ be $\Coll{\mu}{{<}\kappa}$-generic over $\VV[x]$. Then all cofinalities less than or equal to $\mu$ and greater than or equal to $\kappa$ are preserved in $\VV[x,G]$ and $\kappa=(\mu^+)^{\VV[x,G]}$. Moreover, {\cite[Lemma 13]{MR2063629}} shows that the pair $(\VV,\VV[x,G])$ has the $\aleph_1$-approximation and $\aleph_1$-cover property. 
Set $T_1=({}^{{<}\kappa}2)^\VV$. Since $\kappa$ is inaccessible in $\VV$, $(2^\kappa)^\VV=(2^\kappa)^{\VV[x,G]}$ and $(\lambda^\omega)^{\VV[x,G]}=(\lambda^\omega)^\VV<\kappa$ holds for all $\lambda<\kappa$, Lemma  \ref{lemma:SuperthinFromInnerModel} shows that $T_1$ is  a pruned superthin $\kappa$-Kurepa subtree of ${}^{{<}\kappa}\kappa$. 
 In addition, since $\RRR^{\VV[x,G]}\nsubseteq\VV$, we can apply Theorem \ref{theorem:CoverPropNotImage} to conclude that the set $[T_1]$ is not a continuous image of ${}^\kappa\kappa$ in $\VV[x,G]$.  
 Finally, an application of Lemma \ref{lemma:SuperthinContImage} yields a $\kappa$-Kurepa subtree $T_0$ of ${}^{{<}\kappa}\kappa$ in $\VV[x,G]$ such that $T_1\subseteq T_0$ and the set $[T_0]$ is a retract of ${}^\kappa\kappa$ in $\VV[x,G]$.  
\end{proof}

\begin{proof}[Proof of Theorem \ref{theorem:StatSlimImage}]
 Let $\kappa$ be a $2$-Mahlo cardinal, let $E$ denote the set of Mahlo cardinals smaller than $\kappa$ and let $x$ be $\Add{\omega}{1}$-generic over $\VV$. Work in $\VV[x]$ and let $\langle\seq{\vec{\PPP}_{{<}\alpha}}{\alpha\leq\kappa}, ~ \seq{\dot{\PPP}_\alpha}{\alpha<\kappa}\rangle$ be a forcing iteration with Easton support such that the following statements hold whenever $H$ is $\vec{\PPP}_{{<}\alpha}$-generic over $\VV[x]$ for some $\alpha<\kappa$: 
  \begin{itemize}
   \item If $\alpha\in E$, then $\dot{\PPP}_\alpha^H=\Coll{\alpha}{2^\alpha}^{\VV[x,H]}$. 
   
   \item If $\alpha\notin E$, then $\dot{\PPP}_\alpha^H$ is the trivial partial order. 
  \end{itemize}
  
  \begin{claim*}
   Forcing with $\vec{\PPP}_{{<}\kappa}$ over $\VV[x]$ preserves the inaccessibility of $\kappa$, the stationarity of $E$ in $\kappa$ and the regularity of all elements of $E$ and of all regular cardinals greater than or equal to $\kappa$. 
  \end{claim*}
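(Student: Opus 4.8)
The plan is to reduce all three preservation statements to two standard structural facts about the Easton-support iteration $\vec{\PPP}_{{<}\kappa}$, both exploiting that $\kappa$ is Mahlo and that every element of $E$ is Mahlo (hence inaccessible). The first fact is that $\vec{\PPP}_{{<}\kappa}$ satisfies the $\kappa$-chain condition. Each iterand is forced to be either trivial or of the form $\Coll{\alpha}{2^\alpha}$ with $2^\alpha<\kappa$ (using that $\kappa$ is a strong limit in $\VV[x]$), so all iterands have size ${<}\kappa$; since the support is Easton, the Mahloness of $\kappa$ then yields the $\kappa$-chain condition by the usual $\Delta$-system reflection argument onto the stationary set of inaccessibles below $\kappa$. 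The second fact is a factoring: for each $\alpha\in E$ I would write $\vec{\PPP}_{{<}\kappa}\cong\vec{\PPP}_{\leq\alpha}*\dot{\PPP}_{(\alpha,\kappa)}$, where $\vec{\PPP}_{\leq\alpha}=\vec{\PPP}_{{<}\alpha}*\Coll{\alpha}{2^\alpha}$, the head $\vec{\PPP}_{{<}\alpha}$ is $\alpha$-cc (the same argument at the Mahlo cardinal $\alpha$) and has size ${<}\kappa$, and the tail $\dot{\PPP}_{(\alpha,\kappa)}$ is forced to be ${<}\beta$-closed, where $\beta$ is the least element of $E$ above $\alpha$ (such $\beta<\kappa$ exists since $E$ is unbounded). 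The closure of the tail follows because its first nontrivial iterand is $\Coll{\beta}{2^\beta}$, which is ${<}\beta$-closed, and an Easton-support iteration of ${<}\beta$-closed forcings is ${<}\beta$-closed.

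Granting these, I would derive the three statements as follows. For the regularity of cardinals ${\geq}\kappa$ and the preservation of $\kappa$ as a regular cardinal, I would invoke the $\kappa$-chain condition directly, since $\kappa$-cc forcing preserves all cofinalities (hence cardinals and regularity) ${\geq}\kappa$. For the stationarity of $E$, I would again use only the $\kappa$-chain condition, via the standard fact that $\kappa$-cc forcing preserves stationary subsets of $\kappa$: every club of $\kappa$ in the extension contains a ground-model club, obtained by bounding, for each $\alpha<\kappa$, the ${<}\kappa$-many possible values of the next element above $\alpha$ of a name for the club and then taking closure points. Since $\kappa$ is $2$-Mahlo in $\VV[x]$ and $\Add{\omega}{1}$ preserves this, $E$ is stationary in $\VV[x]$, and thus it remains stationary. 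For the inaccessibility of $\kappa$ it then remains to see that $\kappa$ stays a strong limit: given $\lambda<\kappa$, I would pick $\alpha\in E$ with $\lambda<\alpha$, note that $|\vec{\PPP}_{\leq\alpha}|<\kappa$ forces $2^\lambda<\kappa$ (again using that $\kappa$ is a strong limit in $\VV[x]$), and observe that the ${<}\beta$-closed tail with $\beta>\alpha>\lambda$ adds no new subsets of $\lambda$, so $2^\lambda<\kappa$ in the full extension.

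For the regularity of an element $\alpha\in E$ I would combine the two facts at $\alpha$: the head $\vec{\PPP}_{{<}\alpha}$ is $\alpha$-cc and hence keeps $\alpha$ regular; the next iterand $\Coll{\alpha}{2^\alpha}$ is ${<}\alpha$-closed and hence preserves all cofinalities ${\leq}\alpha$; and the tail $\dot{\PPP}_{(\alpha,\kappa)}$ is ${<}\beta$-closed with $\beta>\alpha$, so it adds no cofinal sequence into $\alpha$ of length ${<}\alpha$. Consequently $\alpha$ remains regular in $\VV[x]^{\vec{\PPP}_{{<}\kappa}}$.

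I expect the main obstacle to be the careful bookkeeping behind the two structural facts in the Easton-support setting: proving the $\kappa$-chain condition from the Mahloness of $\kappa$ via $\Delta$-system reflection onto a stationary set of inaccessibles, and verifying that the tail of each factoring is genuinely ${<}\beta$-closed for the Easton supports used. Here one must be attentive to the exact form of the supports at limit stages and to the (standard but easy to misstate) point that ${<}\alpha$-closed forcing preserves precisely the bounded subsets of $\alpha$ — which is exactly what keeps cofinalities ${\leq}\alpha$ and controls $2^\lambda$ for $\lambda<\alpha$ — while it may freely add new subsets of $\alpha$ itself.
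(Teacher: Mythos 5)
Your proposal is correct and takes essentially the same route as the paper: the paper factors $\vec{\PPP}_{{<}\kappa}$ at each Mahlo cardinal $\alpha\leq\kappa$ into an $\alpha$-cc head $\vec{\PPP}_{{<}\alpha}$ and a ${<}\alpha$-closed tail $\dot{\PPP}_{[\alpha,\kappa)}$, citing Propositions 7.12 and 7.13 of Cummings' handbook chapter for the chain condition and closure facts you re-derive by hand, and then reads off regularity, stationarity and the strong-limit property exactly as you do. The only cosmetic difference is that you put the stage-$\alpha$ collapse into the head and use ${<}\beta$-closure of the tail for the next $\beta\in E$, whereas the paper keeps stage $\alpha$ in the tail, which is then ${<}\alpha$-closed.
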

  
  \begin{proof}[Proof of the Claim]
   Let $\alpha\leq\kappa$ be a Mahlo cardinal, let $G$ be $\vec{\PPP}_{{<}\kappa}$-generic over $\VV[x]$ and let $H$ denote the filter on $\vec{\PPP}_{{<}\alpha}$ induced by $G$. 
   Then {\cite[Proposition 7.13]{MR2768691}} shows that $\vec{\PPP}_{{<}\alpha}$ satisfies the $\alpha$-chain condition in $\VV[x]$ and {\cite[Proposition 7.12]{MR2768691}} implies that the induced tail forcing $\dot{\PPP}_{[\alpha,\kappa)}^H$ is ${<}\alpha$-closed in $\VV[x,H]$. In particular, $\alpha$ is regular in $\VV[x,G]$, stationary subsets of $\alpha$ in $\VV$ are stationary in $\VV[x,G]$ and, if $\alpha<\kappa$, then $(2^{{<}\alpha})^{\VV[x,G]}=\alpha$. 
  \end{proof}
  
  Let $G$ be $\vec{\PPP}_{{<}\kappa}$-generic over $\VV[x]$. Since {\cite[Proposition 7.12]{MR2768691}} implies that $\vec{\PPP}_{{<}\kappa}$ is $\sigma$-closed in $\VV[x]$, we can apply {\cite[Lemma 13]{MR2063629}} to conclude that the pair $(\VV,\VV[x,G])$ has the $\aleph_1$-approximation property. Set $T_0=({}^{{<}\kappa}2)^\VV$ and work in $\VV[x,G]$. Then Lemma \ref{lemma:SuperthinFromInnerModel} implies that $T_0$ is a superthin $\kappa$-Kurepa subtree of ${}^{{<}\kappa}\kappa$. Let $T$ be the tree constructed from $T_0$ as in the proof of Lemma \ref{lemma:SuperthinContImage}. 
  
  \begin{claim*}
   If $\alpha\in E$, then $\betrag{T\cap{}^\alpha\kappa}=\alpha$. 
  \end{claim*}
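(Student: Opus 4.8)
The plan is to read off the level $T\cap{}^\alpha\kappa$ directly from the construction of $T$ in the proof of Lemma \ref{lemma:SuperthinContImage}. Since $T_0=({}^{{<}\kappa}2)^\VV$ is already pruned, there we have $S_1=T_0$, so the relevant level splits as
 \[
  T\cap{}^\alpha\kappa ~ = ~ ({}^\alpha 2)^\VV ~ \cup ~ F_\alpha,
 \]
 where $F_\alpha$ collects the new nodes of the construction, namely the length-$\alpha$ sequences extending some $s\in\partial S_1$ with $\dom{s}\in\Lim\cap\alpha$ by zeros. The uniqueness of the limit ordinal $\dom{s}$ with $t\restriction\dom{s}\in\partial S_1$ established in that proof shows that $F_\alpha$ injects into $\Set{s\in\partial S_1}{\dom{s}\in\Lim\cap\alpha}$, and no boundary node of length exactly $\alpha$ contributes, because $\cof{\alpha}=\alpha\geq\mu$ forces $\partial S_1\cap{}^\alpha\kappa=\emptyset$ by the approximation argument in the proof of Lemma \ref{lemma:SuperthinFromInnerModel}. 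It therefore suffices to prove $\betrag{({}^\alpha 2)^\VV}=\alpha$ and $\betrag{F_\alpha}\leq\alpha$ in $\VV[x,G]$.

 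For the first equality, I would let $H=G\restriction\vec{\PPP}_{{<}\alpha}$. The trivial inclusion $({}^\alpha 2)^\VV\subseteq({}^\alpha 2)^{\VV[x,H]}$ gives $(2^\alpha)^\VV\leq(2^\alpha)^{\VV[x,H]}$, and the stage-$\alpha$ forcing $\Coll{\alpha}{2^\alpha}^{\VV[x,H]}$ collapses $(2^\alpha)^{\VV[x,H]}$ to cardinality $\alpha$; hence $\betrag{({}^\alpha 2)^\VV}\leq\alpha$ already in $\VV[x,H,g_\alpha]$, where $g_\alpha$ is the stage-$\alpha$ generic. I would then invoke the preservation Claim above: the tail of the iteration past stage $\alpha$ is isomorphic to the tail starting at the next Mahlo cardinal $\beta>\alpha$ and is thus ${<}\beta$-closed, so it adds no new subsets of $\alpha$, preserves the regularity of $\alpha$, and freezes the value $\betrag{({}^\alpha 2)^\VV}=\alpha$ into $\VV[x,G]$. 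The lower bound $\betrag{({}^\alpha 2)^\VV}\geq\alpha$ is immediate.

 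For the bound on $F_\alpha$, each node $s$ counted lies in $\partial S_1\cap{}^\beta\kappa$ for some limit $\beta<\alpha$ and is a binary sequence, since all of its proper initial segments belong to the binary tree $S_1$; hence $\betrag{\partial S_1\cap{}^\beta\kappa}\leq(2^\beta)^{\VV[x,G]}$. The preservation Claim records that $(2^{{<}\alpha})^{\VV[x,G]}=\alpha$, so $(2^\beta)^{\VV[x,G]}\leq\alpha$ for every $\beta<\alpha$, and summing over the at most $\alpha$ many limit levels below $\alpha$ yields $\betrag{F_\alpha}\leq\alpha\cdot\alpha=\alpha$. Combining the two estimates gives $\betrag{T\cap{}^\alpha\kappa}=\alpha$.

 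The main obstacle I expect is the forcing bookkeeping in the middle step: one must check that the stage-$\alpha$ collapse really targets a cardinal at least $(2^\alpha)^\VV$ and that the remaining tail forcing is closed enough, namely ${<}\beta$-closed for the next Mahlo $\beta>\alpha$ and hence far more than $\alpha^+$-closed, to preserve the computation $\betrag{({}^\alpha 2)^\VV}=\alpha$ without reviving collapsed cardinals or adding fresh length-$\alpha$ branches. Everything else is a routine cardinality count resting on the already-established facts that $\kappa$ stays inaccessible, that $\alpha$ stays regular, and that $2^{{<}\alpha}=\alpha$ holds in $\VV[x,G]$.
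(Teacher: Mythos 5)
Your proof is correct and follows essentially the same route as the paper's: both decompose $T\cap{}^\alpha\kappa$ into $({}^\alpha 2)^\VV$ plus the zero-extensions of boundary nodes at limit levels below $\alpha$, bound the first piece by $\alpha$ via the stage-$\alpha$ collapse, bound the second via $\partial T_0\cap{}^{\bar\alpha}\kappa\subseteq{}^{\bar\alpha}2$ and $(2^{{<}\alpha})^{\VV[x,G]}=\alpha$, and rule out boundary nodes of length exactly $\alpha$ using the $\aleph_1$-approximation property together with the regularity of $\alpha$. The only difference is presentational: you carry out the tail-closure bookkeeping explicitly (indeed slightly more than needed, since an upper bound on cardinality is upward absolute and one only needs $\alpha$ to remain a cardinal), whereas the paper delegates exactly these facts to its preservation claim.
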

  
  \begin{proof}[Proof of the Claim]
   First, note that, if we repeat the construction from the proof of Lemma \ref{lemma:SuperthinContImage}, then $S_0=S_1$. Fix $\alpha\in E$. Then our forcing construction ensures that $\betrag{T_0\cap{}^\alpha\kappa}\leq\betrag{(2^\alpha)^\VV}=\alpha$ holds. 
   Next, notice that, if $\bar{\alpha}\in\Lim\cap\alpha$, then $\partial T\cap{}^{\bar{\alpha}}\kappa\subseteq{}^{\bar{\alpha}}2$ and hence    the proof of the previous claim allows us to conclude that $\betrag{\partial T_0\cap{}^{\bar{\alpha}}\kappa}\leq2^{{<}\alpha}=\alpha$. 
   Finally, since the pair $(\VV,\VV[x,G])$ has the $\aleph_1$-approximation property and $\alpha$ is regular in $\VV[x,G]$, we also know that $\partial T_0\cap{}^\alpha\kappa=\emptyset$. By the definition of $T$, these observations imply the statement of the claim.    
  \end{proof}
  
  Since the proof of Lemma \ref{lemma:SuperthinContImage} shows that $T$ is a $\kappa$-Kurepa subtree of ${}^{{<}\kappa}\kappa$ and the set $[T]$ is a retract of ${}^\kappa\kappa$, the above claim shows that the tree possesses all of the desired properties. 
\end{proof}


\section{Isolated points}

The following simple observation will be central for our investigation of retractions of generalized Baire space onto the sets of cofinal branches of Kurepa trees.

\begin{proposition}\label{proposition:SplitPlusInclude}
 Let $\kappa$ be an uncountable regular cardinal and let $r$ be a retraction from ${}^\kappa\kappa$ to a subset $X$ of ${}^\kappa\kappa$. If $\alpha<\kappa$ and $A\in[X]^{{<}\kappa}$, then there is $\alpha<\beta<\kappa$ such that the following statements hold: 
 \begin{enumerate}
   \item $x\restriction\beta\neq y\restriction\beta$ for all $x,y\in A$ with $x\neq y$. 
   
   \item $r[N_{x\restriction\beta}]\subseteq N_{x\restriction\beta}$ for all $x,y\in A$. 
 \end{enumerate}
\end{proposition}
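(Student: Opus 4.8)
The plan is to find $\beta$ satisfying both conditions by first handling each condition separately and then taking an upper bound. The key point is that $A$ has size ${<}\kappa$ and $\kappa$ is regular, so any family of ${<}\kappa$-many ordinals below $\kappa$ obtained from the conditions will have a common upper bound below $\kappa$.

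For condition (i), since the elements of $A$ are pairwise distinct functions in ${}^\kappa\kappa$, for each pair $x\neq y$ in $A$ there is some ordinal $\gamma_{x,y}<\kappa$ with $x(\gamma_{x,y})\neq y(\gamma_{x,y})$. As $\betrag{A}<\kappa$, there are fewer than $\kappa$-many such pairs, and by the regularity of $\kappa$ the supremum $\gamma=\sup\Set{\gamma_{x,y}+1}{x,y\in A, ~ x\neq y}$ is below $\kappa$. Then $x\restriction\gamma\neq y\restriction\gamma$ for all distinct $x,y\in A$, and the same holds for any $\beta\geq\gamma$.

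For condition (ii), I would exploit the continuity of the retraction $r$ together with the fact that $r\restriction X=\id_X$. Fix $x\in A\subseteq X$. Since $r$ is continuous and $r(x)=x$, for each basic open neighborhood $N_{x\restriction\delta}$ of $x$ there is some $\alpha_x^\delta<\kappa$ such that $r[N_{x\restriction\alpha_x^\delta}]\subseteq N_{x\restriction\delta}$; in particular, taking $\delta$ large we can arrange that $r$ maps a small neighborhood of $x$ into a prescribed neighborhood. The cleanest approach is to observe that the desired conclusion $r[N_{x\restriction\beta}]\subseteq N_{x\restriction\beta}$ is precisely a continuity-type statement at $x$: for each $x\in A$, continuity at the point $x=r(x)$ provides an ordinal $\eta_x<\kappa$ with $r[N_{x\restriction\eta_x}]\subseteq N_{x\restriction\eta_x}$, since we may choose the preimage neighborhood index to be at least as large as the image neighborhood index. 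Again using $\betrag{A}<\kappa$ and regularity, set $\eta=\sup\Set{\eta_x}{x\in A}<\kappa$; then for $\beta\geq\eta$ and each $x\in A$ we have $N_{x\restriction\beta}\subseteq N_{x\restriction\eta_x}$, so $r[N_{x\restriction\beta}]\subseteq N_{x\restriction\eta_x}\subseteq N_{x\restriction\eta_x}$, and one checks this refines to $r[N_{x\restriction\beta}]\subseteq N_{x\restriction\beta}$ by using continuity at the finer level $\beta$ rather than $\eta_x$.

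Finally I would set $\beta=\max\{\gamma,\eta,\alpha+1\}<\kappa$, which exceeds $\alpha$ and simultaneously witnesses both conditions. The main obstacle I anticipate is being careful with condition (ii): the naive use of continuity gives $r[N_{x\restriction\beta'}]\subseteq N_{x\restriction\beta}$ for some $\beta'$ depending on $\beta$, whereas we want the \emph{same} index $\beta$ on both sides. The correct resolution is to first choose, for each $x\in A$ and each target index $\delta$, a source index via continuity, and then define $\beta$ as the supremum over $x\in A$ of a suitably chosen diagonal value; making this diagonalization work with a single common $\beta$ for all $x\in A$ simultaneously — rather than an $x$-dependent threshold — is the delicate bookkeeping step, but it goes through because the regularity of $\kappa$ keeps all the relevant suprema below $\kappa$.
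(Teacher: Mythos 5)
Your treatment of condition (i) is fine, but your argument for condition (ii) has a genuine gap --- and you have in fact located it yourself without repairing it. The assertion that ``continuity at the point $x=r(x)$ provides an ordinal $\eta_x<\kappa$ with $r[N_{x\restriction\eta_x}]\subseteq N_{x\restriction\eta_x}$'' is a non sequitur: one application of continuity yields, for a given target index $\delta$, a source index $\eta\geq\delta$ with $r[N_{x\restriction\eta}]\subseteq N_{x\restriction\delta}$, and enlarging the source index never improves the target index. Indeed, what you assert for $\eta_x$ is exactly the proposition for $A=\{x\}$, so it cannot be quoted as an instance of continuity. The later step ``one checks this refines to $r[N_{x\restriction\beta}]\subseteq N_{x\restriction\beta}$'' fails for the same reason, and the key obstruction is that the property $r[N_{x\restriction\beta}]\subseteq N_{x\restriction\beta}$ is \emph{not} upward closed in $\beta$: if it holds at $\beta_0$ and $\beta>\beta_0$, one only gets $r[N_{x\restriction\beta}]\subseteq N_{x\restriction\beta_0}$, which is weaker than what is needed at $\beta$. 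This non-monotonicity also defeats the plan in your final paragraph under its natural reading: you cannot first find a good ``diagonal value'' $d_x$ for each $x\in A$ separately and then pass to $\beta=\sup_{x\in A}d_x$, because passing to a larger index destroys the conclusion for each individual $x$. The real difficulty is not, as you suggest, obtaining a single $\beta$ uniform over $A$ (that part really is just regularity of $\kappa$); it is equalizing the source and target indices for each single $x$, which no single application, nor any $A$-indexed family of applications, of continuity can achieve.

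The repair --- which is the paper's proof --- is an $\omega$-step iteration in which the supremum over $A$ is taken at every step, followed by a passage to the limit. Choose $\alpha<\beta_0<\kappa$ separating the elements of $A$ as in your condition (i). Given $\beta_n$, use continuity of $r$ at each $x\in A$ (with $r(x)=x$) to find $\beta_n^x\in(\beta_n,\kappa)$ with $r[N_{x\restriction\beta_n^x}]\subseteq N_{x\restriction\beta_n}$, and set $\beta_{n+1}=\sup_{x\in A}\beta_n^x<\kappa$, using $\betrag{A}<\kappa$ and the regularity of $\kappa$. Finally put $\beta=\sup_{n<\omega}\beta_n<\kappa$. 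Then for every $x\in A$ and every $n<\omega$ we have $N_{x\restriction\beta}\subseteq N_{x\restriction\beta_{n+1}}\subseteq N_{x\restriction\beta_n^x}$, hence $r[N_{x\restriction\beta}]\subseteq N_{x\restriction\beta_n}$; since $N_{x\restriction\beta}=\bigcap_{n<\omega}N_{x\restriction\beta_n}$, this gives $r[N_{x\restriction\beta}]\subseteq N_{x\restriction\beta}$. It is precisely this limit identity at the supremum of the $\omega$-sequence that makes the source and target indices coincide, and this iteration is the missing idea in your proposal.
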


\begin{proof}
  We inductively construct a strictly increasing sequence $\seq{\beta_n}{n<\omega}$ of ordinals in the interval $(\alpha,\kappa)$. Since $\betrag{A}<\kappa$, we can find $\alpha<\beta_0<\kappa$ with $x\restriction\beta_0\neq y\restriction\beta_0$ for all $x,y\in A$ with $x\neq y$. Next, assume that $\beta_n$ is already defined for some $n<\omega$. Given $x\in A$, we can then find $\beta_n^x\in(\beta_n,\kappa)$ with $r[N_{x\restriction\beta_n^x}]\subseteq N_{x\restriction\beta_n}$. Set $\beta_{n+1}=\sup_{x\in A}\beta^x_n<\kappa$. Finally, define $\beta=\sup_{n<\omega}\beta_n<\kappa$. Given $x\in A$, this construction ensures that $r[N_{x\restriction\beta_{n+1}}]\subseteq N_{x\restriction\beta_n}$ holds for all $n<\omega$ and this allows us to conclude that $r[N_{x\restriction\beta}]\subseteq N_{x\restriction\beta}$ holds. 
\end{proof}

Using the above proposition, we now show that Kurepa trees that are retracts contain isolated cofinal branches.

\begin{proof}[Proof of Theorem \ref{theorem:ResultsKurepaNoIsolated}] 
 Let $\kappa$ be an uncountable regular cardinal and let $T$ be a $\kappa$-Kurepa subtree of ${}^{{<}\kappa}\kappa$ such that  
  there is a continuous function 
 $\map{r}{{}^\kappa\kappa}{[T]}$ with $r\restriction[T]=\id_{[T]}$ and $[T]$ does not have isolated points. Let $\mu\leq\kappa$ be the least cardinal with $2^\mu\geq\kappa$. 
 
  In the following, we inductively construct a strictly increasing continuous sequence $\seq{\alpha(\xi)<\kappa}{\xi\leq\mu}$ of ordinals and a sequence $\seq{t_s\in T\cap{}^{\alpha(\xi)}\kappa)}{\xi\leq\mu, ~ s\in{}^\xi 2}$ such that the following statements hold for all $\xi\leq\mu$ and $s_0,s_1\in {}^\xi 2$: 
 \begin{enumerate}
  \item If $\zeta<\xi$, then $t_{s_0\restriction\zeta}=t_{s_0}\restriction\alpha(\zeta)$. 

  \item For all $\zeta\leq\xi$, we have $t_{s_0}\restriction\alpha(\zeta)=t_{s_1}\restriction\alpha(\zeta)$ if and only if $s_0\restriction\zeta=s_1\restriction\zeta$. 

  \item If $\xi\in\Lim$, then $t_{s_0}=\bigcup_{\zeta<\xi}t_{s_0\restriction\zeta}$. 

  \item $r[N_{t_{s_0}}]\subseteq [T]\cap N_{t_{s_0}}$. 
 \end{enumerate}

 Set $\alpha(0)=0$ and $t_\emptyset=\emptyset$. 
  Next, assume that $\xi<\mu$ and the sequences $\seq{\alpha(\zeta)}{\zeta\leq\xi}$ and $\seq{t_s}{\zeta\leq\xi, ~ s\in{}^\zeta 2}$ are already constructed.  Fix a sequence $s$ in ${}^\xi 2$. 
 Since we have $r[N_{t_s}]\subseteq [T]\cap N_{t_s}\neq\emptyset$ and $[T]$ contains no isolated points, there are $x^s_0,x^s_1\in N_{t_s}\cap[T]$ with $x^s_0\neq x^s_1$. 
 Set $A_\xi=\Set{x^s_i}{s\in{}^\xi 2, ~ i<2}$. Then the minimality of $\mu$ implies that $\betrag{A_\xi}<\kappa$ and hence we can apply Proposition \ref{proposition:SplitPlusInclude} to $\alpha(\xi)$ and $A_\xi$ to find $\alpha(\xi)<\alpha(\xi+1)<\kappa$ with the listed properties. 
  Given $s\in{}^\xi 2$ and $i<2$, we can then define $t_{s^\frown\langle i\rangle}=x^s_i\restriction\alpha(\xi+1)$. 
 Finally, let $\xi\leq\mu$ be a limit ordinal. Set $\alpha(\xi)=\sup_{\zeta<\xi}\alpha(\zeta)$ and define  $t_s=\bigcup_{\zeta<\xi}t_{s\restriction\zeta}$ for all $s\in{}^\xi 2$. Given  $s\in{}^\xi 2$, we then have 
 $$r[N_{t_s}] ~ \subseteq ~ \bigcap_{\zeta<\xi} r[N_{t_{s\restriction\zeta}}] ~ \subseteq ~ \bigcap_{\zeta<\xi} ([T]\cap N_{t_{s\restriction\zeta}})  
  ~ = ~ [T]\cap N_{t_s}$$ and, since $\emptyset\neq r[N_{t_s}]\subseteq [T]\cap N_{t_s}$, this shows that $t_s\in T$. 
  
  Now, assume that $\kappa$ is not inaccessible. Then $\mu<\kappa$ and the above construction shows that $$\kappa ~ \leq ~ 2^\mu ~ = ~ \betrag{\Set{t_s}{s\in{}^\mu 2}} ~ \leq ~ \betrag{T\cap{}^{\alpha(\mu)}\kappa},$$ a contradiction. 
  
  This shows that $\kappa$ is inaccessible and $\kappa=\mu$. Then the set $$C ~ = ~ \Set{\xi<\kappa}{\textit{$\alpha(\xi)=\xi$ is a cardinal}}$$ is closed and unbounded in $\kappa$. Given $\xi\in C$, the above construction ensures that the set $T\cap{}^\xi\kappa$ has cardinality $2^\xi>\xi$. In particular, the tree $T$ is not stationary slim. 
\end{proof}

Our next goal is to show that every Kurepa tree can be thinned out to obtain a Kurepa tree without isolated branches. The proof of the next lemma is a direct adaptation of a classical argument to higher cardinals.

\begin{lemma}\label{lemma:KurepaNoIsolated}
 Let $\kappa$ be an uncountable regular cardinal and let $T$ be a subtree of ${}^{{<}\kappa}\kappa$ with $\betrag{[T]}>\kappa^{{<}\kappa}$.  Then there is a subtree $S$ of ${}^{{<}\kappa}\kappa$ such that $S\subseteq T$, $\betrag{[T]}=\betrag{[S]}$ and $[S]$ contains no isolated points. 
\end{lemma}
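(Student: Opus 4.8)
The plan is to carry out the Cantor--Bendixson analysis of the closed set $[T]$, adapted to the cardinal $\kappa$. I would define a decreasing sequence of closed subsets of ${}^\kappa\kappa$ by setting $C_0=[T]$, letting $C_{\beta+1}$ be the set of non-isolated points of $C_\beta$ (its \emph{derivative}), and $C_\lambda=\bigcap_{\beta<\lambda}C_\beta$ for limit $\lambda$. Since the derivative of a closed set is closed and the sequence $\seq{C_\beta}{\beta\in\Ord}$ is $\subseteq$-decreasing, it must stabilize at some stage; write $C_\infty$ for the eventual value. At stabilization $C_\infty$ equals its own derivative, so it has no isolated points. The set $S=\Set{t\in{}^{{<}\kappa}\kappa}{N_t\cap C_\infty\neq\emptyset}$ is then a subtree of ${}^{{<}\kappa}\kappa$ with $[S]=C_\infty$, by the general correspondence between closed subsets of ${}^\kappa\kappa$ and subtrees recalled in the introduction. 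Moreover, if $t\in S$, then some $x\in C_\infty\subseteq[T]$ satisfies $t=x\restriction\dom{t}\in T$, so $S\subseteq T$. It therefore remains only to show $\betrag{[S]}=\betrag{[T]}$.

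The heart of the argument is the bound $\betrag{[T]\setminus C_\infty}\leq\kappa^{{<}\kappa}$. Every removed point is removed at a unique successor stage: given $x\in[T]\setminus C_\infty$, the least $\beta$ with $x\notin C_\beta$ is neither $0$ nor a limit (by the intersection rule), so $\beta=\gamma_x+1$ and $x$ is isolated in $C_{\gamma_x}$; fix $s_x\in{}^{{<}\kappa}\kappa$ with $N_{s_x}\cap C_{\gamma_x}=\{x\}$. The plan is to verify that $x\mapsto s_x$ is injective. Suppose $s_x=s_y$ with, without loss of generality, $\gamma_x\leq\gamma_y$. If $\gamma_x<\gamma_y$, then $C_{\gamma_y}\subseteq C_{\gamma_x+1}$, while $N_{s_x}\cap C_{\gamma_x}=\{x\}$ together with $x\notin C_{\gamma_x+1}$ forces $N_{s_x}\cap C_{\gamma_x+1}=\emptyset$, hence $N_{s_y}\cap C_{\gamma_y}=\emptyset$, contradicting $y\in N_{s_y}\cap C_{\gamma_y}$; and if $\gamma_x=\gamma_y$, then $x$ and $y$ are both the unique point of $N_{s_x}\cap C_{\gamma_x}$, so $x=y$. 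This injection into ${}^{{<}\kappa}\kappa$ yields $\betrag{[T]\setminus C_\infty}\leq\betrag{{}^{{<}\kappa}\kappa}=\kappa^{{<}\kappa}$.

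Finally, since $\betrag{[T]}>\kappa^{{<}\kappa}\geq\betrag{[T]\setminus C_\infty}$, I would conclude $\betrag{C_\infty}=\betrag{[T]}$, so that $S$ has all the desired properties. I expect the main obstacle to be precisely the injectivity of the coding $x\mapsto s_x$, i.e.\ the bookkeeping fact that a basic open set isolating a point at a given stage meets no later stage of the derivation; once this is in place, both the cardinality computation and the identification of $C_\infty$ as the set of cofinal branches of a subtree of $T$ are routine.
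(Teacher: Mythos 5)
Your proposal is correct and follows essentially the same route as the paper: a Cantor--Bendixson derivation of $[T]$, with isolated points coded by isolating basic open neighbourhoods in ${}^{{<}\kappa}\kappa$ to bound everything removed by $\kappa^{{<}\kappa}<\betrag{[T]}$. The only (cosmetic) difference is that you package this as a single injection $x\mapsto s_x$ defined on all removed points at once, whereas the paper first shows the stage-wise families $K_\gamma$ of isolating nodes are pairwise disjoint to force stabilization below $(\kappa^{{<}\kappa})^+$ and then bounds each $I_\beta$ separately; your version is, if anything, slightly more economical.
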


\begin{proof}
  Set $\theta=\kappa^{{<}\kappa}$ and let $\seq{C_\gamma}{\gamma\leq \theta^+}$ denote the unique sequence of closed subsets of ${}^\kappa\kappa$ such that $C_0=[T]$ and the following statements hold for all $\gamma\leq\theta^+$: 
 \begin{enumerate}
  \item If $\gamma<\theta^+$ and $I_\gamma$ is the set of isolated points of $C_\gamma$, then $C_{\gamma+1}=C_\gamma\setminus I_\gamma$. 

  \item If $\gamma$ is a limit ordinal, then $C_\gamma=\bigcap_{\beta<\gamma}C_\beta$. 
 \end{enumerate}
 
 Assume, towards a contradiction, that $C_\gamma\neq C_{\theta^+}$ for every $\gamma<\theta^+$. Then $I_\gamma\neq\emptyset$ for every $\gamma<\theta^+$. 
 Given $\gamma<\theta^+$ and $x\in I_\gamma$, let $\alpha^x_\gamma$ denote the least $\alpha<\kappa$ with $C_\gamma\cap N_{x\restriction\alpha}=\{x\}$.
  Next, define 
 $K_\gamma=\Set{x\restriction \alpha^x_\gamma\in{}^{{<}\kappa}\kappa}{x\in I_\gamma}\neq\emptyset$. 
 Then we have $K_\gamma\cap K_{\bar{\gamma}}=\emptyset$ for all $\bar{\gamma}<\gamma<\theta^+=(\kappa^{{<}\kappa})^+$, a contradiction. 

 The above computations show that there is a $\gamma<\theta^+$ with $C_\gamma=C_{\theta^+}$ and this shows that the closed set $C_\gamma$ has no isolated points. 
 Let $S$ denote the canonical subtree of ${}^{{<}\kappa}\kappa$ with $[S]=C_\gamma$. 
 Then $S\subseteq T$ and $$\betrag{[S]} ~ = ~ \betrag{[T]\setminus\bigcup_{\beta<\gamma}I_\beta} ~ = ~ \betrag{[T]},$$ because  for every $\beta<\theta^+$, the fact that $I_\beta$ consists of the isolated points of $C_\beta$, allows us to conclude that $\betrag{I_\beta}\leq\kappa^{{<}\kappa}<\betrag{[T]}$. 
\end{proof}

Using the above lemma, we are now able to prove the remaining results from Section \ref{subsection:NoRetracts}.

\begin{proof}[Proof of Theorem \ref{theorem:NotAllRetracts}]
 Let $\kappa$ be an uncountable regular cardinal with $\kappa^{<\kappa}=\kappa$ and assume there exists a $\kappa$-Kurepa tree. If $\kappa$ is inaccessible, then Theorem \ref{theorem:KurepaInaccessibleWO} shows that there is a $\kappa$-Kurepa subtree $T$ of ${}^{{<}\kappa}\kappa$ with the property that the set $[T]$ is not a retract of ${}^\kappa\kappa$. In the other case, if $\kappa$ is not inaccessible, then our assumption allows us to use Lemma \ref{lemma:KurepaNoIsolated} to find a $\kappa$-Kurepa subtree $T$ of ${}^{{<}\kappa}\kappa$ with the property that the set $[T]$ has no isolated points and Theorem \ref{theorem:ResultsKurepaNoIsolated} then shows that the set $[T]$ is not a retract of ${}^\kappa\kappa$. 
 Finally, if there exists a stationary slim $\kappa$-Kurepa subtree $T$ of ${}^{{<}\kappa}\kappa$ and $S$ is the subtree of $T$ produced by an application of Lemma \ref{lemma:KurepaNoIsolated}, then $S$ is also stationary slim, the set $[S]$ has no isolated points and hence Theorem \ref{theorem:ResultsKurepaNoIsolated} shows that the set $[S]$ is not a retract of ${}^\kappa\kappa$. 
\end{proof}

\begin{proof}[Proof of Theorem \ref{theorem:ContImageButNotRetracts}]
 Let $\kappa$ be an uncountable regular cardinal, let $S$ be a $\kappa$-Kurepa subtree of ${}^{{<}\kappa}\kappa$ and let $\map{c}{{}^\kappa\kappa}{[S]}$ be a continuous surjection. We let $I$ denote the set of isolated points of $[S]$ and we fix an injection $\map{\iota}{I}{S}$ with $N_{\iota(z)}\cap[S]=\{z\}$ for all $z\in I$. Define $T$ to be the union of $S$ and the set $$\Set{t\in{}^{{<}\kappa}\kappa}{\exists z\in I ~ [\iota(z)\subseteq t\wedge \betrag{\Set{\alpha\in\dom{t}\setminus\dom{\iota(z)}}{t(\alpha)\neq z(\alpha)}}<\omega]}.$$ 
 Then $T$ is a subtree of ${}^{{<}\kappa}\kappa$ with $\betrag{T\cap{}^\alpha\kappa}\leq\betrag{S\cap{}^\alpha\kappa}+\betrag{\alpha}+\omega<\kappa$ for all $\alpha<\kappa$. Moreover,  it is easy to see that $$[T] ~ = ~ [S] ~ \cup ~ \Set{y\in{}^\kappa\kappa}{\exists z\in I ~ [\iota(z)\subseteq y\wedge \betrag{\Set{\alpha<\kappa}{y(\alpha)\neq z(\alpha)}}<\omega]}.$$ This shows that  $T$ is a $\kappa$-Kurepa subtree of ${}^{{<}\kappa}\kappa$ with the property that the set $[T]$ does not contain isolated points and, if $S$ is stationary slim, then $T$ is also stationary slim. 
 
 For each $z\in I$, the set $N_{\iota(z)}\cap[T]$ has cardinality $\kappa$ and we can fix an enumeration $\seq{y_z(\alpha)}{\alpha<\kappa}$ of this set. Moreover, we pick an injection $\map{\rho}{I}{{}^{{<}\kappa}\kappa}$ with the property that  $c[N_{\rho(z)}]\subseteq N_{\iota(z)}$ holds for all $z\in I$. Then $c[N_{\rho(z)}]=\{z\}$ holds for all $z\in I$. Set $O=\bigcup\Set{N_{\rho(z)}}{z\in I}$ and let $\map{d}{{}^\kappa\kappa}{[T]}$ denote the unique function with $d\restriction({}^\kappa\kappa\setminus O)=c\restriction({}^\kappa\kappa\setminus O)$ and $d(x)=y_{c(x)}(x(\dom{\rho(c(x))}))$ for all $x\in O$.  Then $d$ is a surjection. 
 
 \begin{claim*}
  The map $d$ is continuous. 
 \end{claim*}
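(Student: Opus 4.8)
The plan is to verify the continuity of $d$ pointwise, treating the open set $O$ and its complement separately. The first thing I would record is that the family $\Set{N_{\rho(z)}}{z\in I}$ is pairwise disjoint: if some $x$ belonged to $N_{\rho(z)}\cap N_{\rho(z')}$, then $c[N_{\rho(z)}]=\{z\}$ and $c[N_{\rho(z')}]=\{z'\}$ would force $z=c(x)=z'$. Hence $O$ is partitioned by these basic open sets, and every $x\in O$ lies in a unique $N_{\rho(c(x))}$, so that the formula defining $d$ on $O$ is unambiguous.

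For continuity on $O$, I would show that $d$ is locally constant there. Fix $x\in O$ and put $z=c(x)$, so $x\in N_{\rho(z)}$ and $d(x)=y_z(x(\dom{\rho(z)}))$. For any $x'\in N_{x\restriction(\dom{\rho(z)}+1)}$ we have $x'\restriction\dom{\rho(z)}=\rho(z)$ and $x'(\dom{\rho(z)})=x(\dom{\rho(z)})$, hence $x'\in N_{\rho(z)}$ and $d(x')=d(x)$. Thus $d$ is constant on a neighbourhood of $x$, giving continuity at every point of $O$.

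The substantial case is continuity at a point $x\notin O$, where $d(x)=c(x)$. Given $\alpha<\kappa$, I would use the continuity of $c$ to pick $\beta_0$ with $c[N_{x\restriction\beta_0}]\subseteq N_{c(x)\restriction\alpha}$, and then take $\beta\geq\beta_0$ large enough that, in addition, $\beta>\dom{\rho(c(x))}$ in case $c(x)\in I$. The goal is $d[N_{x\restriction\beta}]\subseteq N_{c(x)\restriction\alpha}$. Points $x'\in N_{x\restriction\beta}\setminus O$ are immediate, since $d(x')=c(x')\in N_{c(x)\restriction\alpha}$. For a point $x'\in N_{x\restriction\beta}\cap O$, say $x'\in N_{\rho(z)}$ with $z=c(x')\in I$, the sets $N_{\rho(z)}$ and $N_{x\restriction\beta}$ meet; since $x\notin O$, this compatibility forces $\dom{\rho(z)}>\beta$ and hence $N_{\rho(z)}\subseteq N_{x\restriction\beta}$. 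Consequently $z\in c[N_{x\restriction\beta}]\subseteq N_{c(x)\restriction\alpha}$, so $z\restriction\alpha=c(x)\restriction\alpha$, and the choice of $\beta$ (together with the injectivity of $\rho$) guarantees $z\neq c(x)$.

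The crux of the argument, and the step I expect to be the main obstacle, is to bound how far the perturbed branch $d(x')=y_z(\cdots)\in N_{\iota(z)}\cap[T]$ can stray from $z$ below $\alpha$. I would establish the dichotomy that every such $z$ with $z\neq c(x)$ satisfies $\dom{\iota(z)}\geq\alpha$: otherwise $\iota(z)=z\restriction\dom{\iota(z)}=c(x)\restriction\dom{\iota(z)}$ (using $\iota(z)\subseteq z$ and $z\restriction\alpha=c(x)\restriction\alpha$) would place $c(x)$ in $N_{\iota(z)}\cap[S]=\{z\}$, contradicting $z\neq c(x)$ since $c$ takes values in $[S]$. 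Granting this, $d(x')$ agrees with $z$ on all coordinates below $\dom{\iota(z)}\geq\alpha$, whence $d(x')\restriction\alpha=z\restriction\alpha=c(x)\restriction\alpha=d(x)\restriction\alpha$. Combining the two kinds of points gives $d[N_{x\restriction\beta}]\subseteq N_{d(x)\restriction\alpha}$, completing the verification. The delicate point throughout is that the only link between $z=c(x')$ and $c(x)$ is the defining property $N_{\iota(z)}\cap[S]=\{z\}$ of the stems, together with $c[{}^\kappa\kappa]\subseteq[S]$; it is precisely this interplay that prevents the finitely many coordinates where $y_z(\cdots)$ differs from $z$ from lying below $\alpha$.
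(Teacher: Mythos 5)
Your proof is correct, and its engine is the same as the paper's: for a point $x'$ of the chosen neighbourhood that falls into $O$, say $x'\in N_{\rho(z)}$ with $z=c(x')\in I$, the value $d(x')$ lies in $N_{\iota(z)}$, and the defining property $N_{\iota(z)}\cap[S]=\{z\}$ is used to force the stem $\iota(z)$ to reach beyond the prescribed precision, since otherwise $c(x)\in[S]$ would lie in $N_{\iota(z)}$ and hence equal $z$. The difference is organisational: the paper splits the case $x\notin O$ according to whether $c(x)\in I$. When $c(x)\in I$, it shrinks the neighbourhood so as to miss $O$ altogether (possible because $c$ is locally constant at $x$ and any point of $O$ meeting that neighbourhood would have to lie in $N_{\rho(c(x))}$, which can be avoided), making $d=c$ constant there; the stem argument is run only when $c(x)\notin I$, with the contradiction in the dichotomy coming from $c(x)\notin I$ itself. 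You instead handle $x\notin O$ uniformly: the extra requirement $\beta>\dom{\rho(c(x))}$ (imposed when $c(x)\in I$) yields $z\neq c(x)$ for every $z$ arising from $N_{x\restriction\beta}\cap O$, and your dichotomy derives its contradiction from $z\neq c(x)$ instead. This trades the paper's additional (but trivial) case for one extra condition on $\beta$; both routes are sound. One cosmetic slip: your appeal to the injectivity of $\rho$ is not what is needed — to get $z\neq c(x)$ from $\dom{\rho(z)}>\beta>\dom{\rho(c(x))}$ one only uses that $\rho$ is a function (equal arguments would give equal domains), while in the case $c(x)\notin I$ the inequality $z\neq c(x)$ holds simply because $z\in I$.
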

 
 \begin{proof}[Proof of the Claim]
  Fix $x\in{}^\kappa\kappa$ and $\beta<\kappa$. 

  First, assume that $x\in O$. Then $d[N_{x\restriction(\dom{\rho(c(x))}+1)}]=\{d(x)\}\subseteq N_{d(x)\restriction\beta}$. 

  Next, assume that $x\notin O$ and $c(x)\in I$. Since $c(x)$ is isolated in $T$ and $x\notin O$, there is $\alpha<\kappa$ with $N_{x\restriction\alpha}\cap N_{\rho(c(x))}=\emptyset$ and $c[N_{x\restriction\alpha}]=\{c(x)\}$. Then $N_{x\restriction\alpha}\cap O=\emptyset$ and we can conclude that $d[N_{x\restriction\alpha}]=c[N_{x\restriction\alpha}]=\{c(x)\}\subseteq N_{d(x)\restriction\beta}$. 

  Finally, assume that $x\notin O$ and $c(x)\notin I$. 
  Then there is $\alpha<\kappa$ with $c[N_{x\restriction\alpha}]\subseteq N_{c(x)\restriction\beta}$. 
 Fix $u\in N_{x\restriction\alpha}\cap O$. 
 Then $c(u)\in N_{\iota(c(u))}\cap N_{c(x)\restriction\beta}\neq\emptyset$ and this implies that the sequences $\iota(c(u))$ and $c(x)\restriction\beta$ are comparable.  
 But then $c(x)\restriction\beta\subsetneq\iota(c(u))$, because $\iota(c(u))\subseteq c(x)\restriction\beta$ would imply that $c(x)\in N_{c(x)\restriction\beta}\cap[T]\subseteq N_{\iota(\rho(c(u)))}\cap[T]=\{c(u)\}\subseteq I$. 
 This shows that $d(u)=y_{c(u)}(u(\dom{\rho(c(u))}))\in N_{\iota(c(u))}\subseteq N_{c(x)\restriction\beta}$. Since $c(x)=d(x)$, these computations show that $d[N_{x\restriction\alpha}]\subseteq N_{d(x)\restriction\beta}$ holds.  
 \end{proof}
 
 Finally, assume that the set $[T]$ is a retract of ${}^\kappa\kappa$. Then Theorem \ref{theorem:ResultsKurepaNoIsolated} shows that $\kappa$ is inaccessible and $S$ is not stationary slim. Define $$U ~ = ~ \Set{u\in{}^{{<}\kappa}2}{(\exists\alpha\in\dom{u} ~ u(\alpha)=1) \longrightarrow u(0)=1}.$$ Then $U$ is a $\kappa$-Kurepa subtree of ${}^{{<}\kappa}\kappa$, the set $[U]$ is a retract of ${}^\kappa\kappa$ and the unique element $x$ of ${}^\kappa\kappa$ with $x(0)=0$ is an isolated point of $[U]$. Let $W$ be the $\kappa$-Kurepa subtree of ${}^{{<}\kappa}\kappa$ obtained from $U$ through the above construction. Then there is $0<\beta_0<\kappa$ such that the set [W] is equal to $$\Set{x\in{}^\kappa 2}{x(0)=1}\cup\Set{x\in{}^\kappa\kappa}{\forall\alpha<\beta_0 ~ x(\alpha)=0 ~ \wedge ~  \betrag{\Set{\alpha<\kappa}{x(\alpha)\neq 0}}<\omega}.$$
 Assume, towards a contradiction, that $\map{r}{{}^\kappa\kappa}{[W]}$ is a map witnessing that $[W]$ is a retract of ${}^\kappa\kappa$. 
 We now inductively construct a sequence $\seq{x_n}{n<\omega}$ of elements of $[W]$ and a strictly increasing sequence $\seq{\beta_n}{n<\omega}$ of ordinals below $\kappa$  such that $x_n(\beta_n)\neq 0$ and $x_n\restriction\beta_{n+1}=x_{n+1}\restriction\beta_{n+1}$ for all $n<\omega$. 
 Let $x_0$ be an arbitrary element of $[W]$ with $x_0(0)=0$ and $x(\beta_0)\neq 0$. 
  If $x_n$ and $\beta_n$ are already constructed, then we can apply Proposition \ref{proposition:SplitPlusInclude} to find $\beta_{n+1}\in(\beta_n,\kappa)$  with $r[N_{x_n\restriction\beta_{n+1}}]\subseteq N_{x_n\restriction\beta_{n+1}}$ and we pick $x_{n+1}\in[W]$  with $x_n\restriction\beta_{n+1}\subseteq x_{n+1}$ and $x_{n+1}(\beta_{n+1})\neq 0$. Pick $x\in{}^\kappa\kappa$ with $x_n\restriction\beta_{n+1}\subseteq x$ for all $n<\omega$. Then $x_n\restriction\beta_{n+1}\subseteq r(x)$ for all $n<\omega$ and hence $r(x)(\beta_n)=x_n(\beta_n)\neq 0$ holds for all $n<\omega$. Since $r(x)(\alpha)=0$ holds for all $\alpha<\beta_0$, this shows that $r(x)\notin[W]$, a contradiction.  
\end{proof}


\section{Open questions}

We end this paper with a compilation of questions left open by the above results. 
 First, note that Theorem \ref{theorem:Main-Negative} shows that by forcing with $\Add{\omega}{\omega_2}$ over $\LL$, we produce a model of set theory in which there exist $\aleph_2$-Kurepa trees and for every such tree $T$, the set $[T]$ is not a continuous image of ${}^{\omega_2}\omega_2$. 
 In addition,  Theorems \ref{theorem:ConsStrengthContImage} and \ref{theorem:KurepaImageInL} show that, in $\LL$, there exist $\aleph_2$-Kurepa trees $T_0$ and $T_1$ such that the set $[T_0]$ is a continuous image of ${}^{\omega_2}\omega_2$ and the set $[T_1]$ is not a continuous image of ${}^{\omega_2}\omega_2$. 
 Therefore, there is only one constellation whose consistency is not settled by our results:

\begin{question} 
Is it consistent with the axioms of $\ZFC$ that there are $\aleph_2$-Kurepa trees and for every such tree $T$, the set $[T]$ is a continuous image of ${}^{\omega_2}\omega_2$? 
\end{question}

Next, note that the only way to apply the above results to obtain models of $\ZFC$ that contain $\aleph_2$-Kurepa trees and have the property that no such tree is a continuous image of ${}^{\omega_2}\omega_2$ is to consider models in which $\CH$ fails. 
 Thus, it is natural to ask the following question:

\begin{question} 
 Does $\CH$ together with the existence of an $\aleph_2$-Kurepa tree imply that there exists an $\aleph_2$-Kurepa subtree $T$ of ${}^{{<}\omega_2}\omega_2$ with the property that the set $[T]$ is: 
 \begin{enumerate}
  \item 
  a continuous image of ${}^{\omega_2}\omega_2$? 

  \item 
  a  retract of ${}^{\omega_2}\omega_2$? 
 \end{enumerate}
\end{question}

Finally, the Kurepa trees constructed in the proofs of the results presented in Section \ref{subsection:PositiveResults} all arise from modifications of canonical Kurepa trees whose existence is ensured by the assumptions of these results. 
 Therefore, it is also interesting to study the descriptive properties of these canonical Kurepa trees themselves.

\begin{question}
 Let $\kappa$ denote an uncountable regular cardinal with the property that $\mu^\omega<\kappa$ holds for all $\mu<\kappa$. 
 \begin{enumerate}
  \item If $T$ is the canonical $\kappa$-Kurepa subtree of ${}^{{<}\kappa}\kappa$ constructed from a $\Diamond_\kappa^+$-sequence (see {\cite[Chapter II, Section 7]{MR597342}}), is the set $[T]$ a continuous image of ${}^\kappa\kappa$? 
  
  \item If $T$ is the canonical $\kappa$-Kurepa subtree of ${}^{{<}\kappa}\kappa$ constructed from a $(\kappa,1)$-morass (see {\cite[Section 1.2]{MR701126}}) or a simplified morass (see \cite{MR771773}), is the set $[T]$ a continuous image of ${}^\kappa\kappa$?  
 \end{enumerate}
\end{question}


\bibliographystyle{amsplain}
\bibliography{references}

\providecommand{\bysame}{\leavevmode\hbox to3em{\hrulefill}\thinspace}
\providecommand{\MR}{\relax\ifhmode\unskip\space\fi MR }
\providecommand{\MRhref}[2]{%
  \href{http://www.ams.org/mathscinet-getitem?mr=#1}{#2}
}
\providecommand{\href}[2]{#2}
\begin{thebibliography}{10}

\bibitem{MR2768691}
James Cummings, \emph{Iterated forcing and elementary embeddings}, Handbook of
  set theory. {V}ols. 1, 2, 3, Springer, Dordrecht, 2010, pp.~775--883.

\bibitem{MR750828}
Keith~J. Devlin, \emph{Constructibility}, Perspectives in Mathematical Logic,
  Springer-Verlag, Berlin, 1984.

\bibitem{MR791060}
Hans-Dieter Donder, \emph{Another look at gap-{$1$} morasses}, Recursion theory
  ({I}thaca, {N}.{Y}., 1982), Proc. Sympos. Pure Math., vol.~42, Amer. Math.
  Soc., Providence, RI, 1985, pp.~223--236.

\bibitem{MR2063629}
Joel~David Hamkins, \emph{Extensions with the approximation and cover
  properties have no new large cardinals}, Fundamenta Mathematicae \textbf{180}
  (2003), no.~3, 257--277.

\bibitem{MR0284331}
Thomas~J. Jech, \emph{Trees}, The Journal of Symbolic Logic \textbf{36} (1971),
  1--14.

\bibitem{JensenKunen1969:Ineffable}
Ronald~B. Jensen and Kenneth Kunen, \emph{Some combinatorial properties of
  {$L$} and {$V$}}, Handwritten notes, 1969.

\bibitem{MR1321597}
Alexander~S. Kechris, \emph{Classical descriptive set theory}, Graduate Texts
  in Mathematics, vol. 156, Springer-Verlag, New York, 1995.

\bibitem{MR2013395}
Bernhard K\"onig, \emph{Local coherence}, Annals of Pure and Applied Logic
  \textbf{124} (2003), no.~1-3, 107--139.

\bibitem{MR597342}
Kenneth Kunen, \emph{{Set theory}}, {Studies in Logic and the Foundations of
  Mathematics}, vol. 102, North-Holland Publishing Co., Amsterdam-New York,
  1980, An introduction to independence proofs.

\bibitem{luecke-schlicht-continuous-images}
Philipp L{\"u}cke and Philipp Schlicht, \emph{Continuous images of closed sets
  in generalized {B}aire spaces}, Israel Journal of Mathematics \textbf{209}
  (2015), no.~1, 421--461.

\bibitem{MR1049849}
Lee Stanley, Daniel~J. Velleman, and Charles Morgan,
  \emph{{$\omega_3\omega_1\to (\omega_3\omega_1,3)^2$} requires an
  inaccessible}, Proceedings of the American Mathematical Society \textbf{111}
  (1991), no.~4, 1105--1118.

\bibitem{MR631563}
Stevo Todor\v{c}evi\'{c}, \emph{Trees, subtrees and order types}, Annals of
  Mathematical Logic \textbf{20} (1981), no.~3, 233--268.

\bibitem{MR701126}
Daniel~J. Velleman, \emph{Morasses, diamond, and forcing}, Annals of
  Mathematical Logic \textbf{23} (1982), no.~2-3, 199--281 (1983).

\bibitem{MR771773}
\bysame, \emph{Simplified morasses with linear limits}, The Journal of Symbolic
  Logic \textbf{49} (1984), no.~4, 1001--1021.

\end{thebibliography}
 

\end{document}